\documentclass[a4paper,11pt]{article}
\usepackage[left=0.8in, right=0.8in, top=1in, bottom=1in]{geometry}

\usepackage{amsmath}
\usepackage{amssymb}
\usepackage{graphicx,psfrag}
\usepackage{amsfonts}
\usepackage{amscd}
\usepackage{enumerate,placeins,wrapfig,booktabs,chngcntr}

\usepackage{indentfirst}%

\RequirePackage{algorithm}
\RequirePackage{algorithmic}
\RequirePackage{color}

\usepackage{subcaption} %
\usepackage{multirow} %

\usepackage{nccmath} %

\usepackage{etoolbox} %
\patchcmd{\SetTagPlusEndMark}{$}{}{}{}
\patchcmd{\SetTagPlusEndMark}{$}{}{}{}

\usepackage{hyperref}

\allowdisplaybreaks[4] %

\def \x{\mathbf{x}}
\def \y{\mathbf{y}}
\def \yy{y}
\def \v{\mathbf{v}}
\def \vv{v}
\def \zz{\mathbf{z}}
\def \aa{\mathbf{a}}
\def \bb{\mathbf{b}}
\def \ee{\mathbf{e}} %
\def \rr{\mathbf{r}} %

\def \gg{g}
\def \dd{\mathbf{d}} %

\def \X{\mathcal{X}}
\def \Y{\mathcal{Y}}
\def \F{\mathcal{F}} %
\def \N{\mathcal{N}} %
\def \FF{\widetilde{\F}} %
\def \LL {L}

\def \R{\mathbb{R}}

\def \RR{R} %

\def \mid {\mathrm{mid}}
\def \Proj{\mathtt{Proj}}

\newtheorem{theorem}{Theorem}[section]
\newtheorem{lemma}[theorem]{Lemma}

\newtheorem{proposition}[theorem]{Proposition}
\newtheorem{definition}[theorem]{Definition}
\newtheorem{assumption}[theorem]{Assumption} 
\newtheorem{remark}[theorem]{Remark} 

\renewenvironment{proof}{\bigskip\noindent\textbf{Proof.}\ \ }{\qed}
\def\qed{ \hfill $\square$}

\makeatletter
\newcommand{\lowtop}{^{\mkern-6mu\raisebox{-0.5ex}{$\scriptstyle\top$}}}  %
\makeatother

\title{\textbf{ %
		Optimization with Parametric Variational Inequality Constraints on a Moving Set
} } %
\author{Xiaojun CHEN\thanks{Department of Applied Mathematics, The Hong Kong Polytechnic University, Kowloon, Hong Kong, China ({xiaojun.chen@polyu.edu.hk}).}
	\and Jin ZHANG\thanks{Department of Mathematics, and National Center for Applied Mathematics Shenzhen, Southern University of Science and Technology, Shenzhen, Guangdong, China ({zhangj9@sustech.edu.cn}).}
	\and Yixuan ZHANG
	\thanks{Corresponding author. Department of Applied Mathematics, The Hong Kong Polytechnic University, Kowloon, Hong Kong, China ({yi-xuan.zhang@connect.polyu.hk}).}
}

\begin{document}

	\date{\empty}
	\maketitle

	\begin{abstract}{
			This paper focuses on optimization problems constrained by Parametric Variational Inequalities (PVI) defined on a moving set.
			Unlike most existing works on mathematical programs with equilibrium constraints, the equilibrium constraints have parameters not only in the function but also in the related set.
			We show that the solution function of the PVI is Lipschitz continuous with respect to the  upper-level decision variables and the solution set of
the optimization problem is nonempty and bounded. Moreover, we prove that the metric regularity of the constraints holds automatically, which allow us to characterize stationary points without any additional assumptions.
			A Smoothing Implicit Gradient Algorithm (SIGA) is proposed based on the smoothing approximation of the PVI. We prove the convergence of SIGA to a stationary point of the optimization problem and numerically validate the efficiency of SIGA by portfolio management problems with real data.			
		}
	\end{abstract}

%
%

	\section{Introduction}
Let $\X \subset \R^m$ be a nonempty, bounded, closed and convex set and let
 $\Omega: \X \rightrightarrows \R^n$ be a set valued mapping that is
nonempty, closed and convex for any $\x \in \X$.
A Parametric Variational Inequalities (PVI) problem parameterized by~$\x$, denoted as~VI$(\Omega(\x),F(\x,\cdot))$,
seeks $\y \in \Omega(\x)$ such that
\[
(\y' - \y)^\top F(\x,\y) \ge 0, \,\,\,  \forall \y' \in \Omega(\x),
\]
where $F: \R^m \times \R^n \rightarrow \R^n$ is a continuously differentiable function.
The function $F$ and set valued mapping $\Omega$ capture the interplay between decision variables $\y$ and parameters~$\x$.
It is known that for any fixed $\x \in \X$,
a vector $\y$ solves VI$(\Omega(\x), F(\x,\cdot))$ if and only if
\[
\y = \Proj_{\Omega(\x)} (\y - \delta F(\x,\y)),
\]
where $\delta>0$ is any fixed constant \cite[Proposition 1.5.8]{facchinei2003finite}.

In this paper, we consider the following optimization problem with PVI constraints
\begin{equation}\label{eq:1equation constraint}
	\tag{$\mathrm{P}$}
	\begin{aligned}
		\min\limits_{\x \in \X, \ \y} \  & f(\x,\y)  \\[-1.3ex]
		\text{ s.t. }\ & \y =\Psi(\x,\y), \\
	\end{aligned}
\end{equation}
where $f:\R^m \times \R^n \rightarrow \R$ is continuously differentiable and
\begin{equation}\label{Psi}
\Psi(\x,\y)=\Proj_{\Omega(\x)} (\Phi(\x,\y)) \,\, \, \, {\rm with} \,\,\,\,
\Phi(\x,\y)= \y - \delta F(\x,\y).
\end{equation}

Problem~\eqref{eq:1equation constraint} captures
a powerful class of problems that arise in a wide range of applications
including but not limited to machine learning~\cite{liu2023hierarchical},
portfolio management~\cite{chen2018smoothing},
traffic network design~\cite{labbe2021bilevel,marcotte1986network}, %
and fluid mechanics~\cite{chen2022efficient},
especially considering its close relation to bi-level programming.
Indeed,~\eqref{eq:1equation constraint} encompasses more general forms of problems than traditional bi-level programming~\cite{dempe2002foundations,luo1996mathematical}. %
However,~\eqref{eq:1equation constraint} is known to be particularly challenging because it combines the complexity of the PVI with the need to optimize an objective function, creating a two-level structure hard to analyze and solve. A key feature of problem \eqref{eq:1equation constraint}  is the moving set, where the feasible region of variables evolves with the problem's parameters, %
mirroring various application scenarios.
For example,
in machine learning, hyperparameter tuning receives much attention while the range of variables may rely on the hyperparameters;
in portfolio optimization, asset allocation bounds may shift with market conditions;
in traffic network design, route capacities vary with flow dynamics.
See~\cite[Chapter~1.2]{luo1996mathematical} for more detailed applications on Stackelberg game in economics and origin-destination demand adjustment problem in transportation.
The dynamic nature from the moving set introduces significant complexity, as the constraints of problem~\eqref{eq:1equation constraint} involve a nonsmooth function
$\Psi$, which is defined by a projection on a parametric set $\Omega(\x)$.

In response to the challenges associated with a moving set $\Omega(\x)$, existing research primarily concentrates on the simplified scenario where the Variational Inequality (VI) is defined over a fixed set $\Omega(\x)\equiv{\bf \Omega}$ for any $\x \in \X$; see, for instance, \cite{gfrerer2017new, lin2009solving,ye2000constraint}.
Specially, approaches based on Mathematical Programs with Complementarity Constraints (MPCC) generally focus on cases where the VI reduces directly to complementarity constraints \cite{chen2004smoothing, chen2009class,scholtes2001convergence},
or they transform the VI into complementarity constraints \cite{dutta2025nonconvex, facchinei1999smoothing,okuno2021lp, outrata1995numerical,wu2015inexact}, where ${\bf \Omega}=\R^n_+$.
However, the VI$({\bf \Omega}, F(\x,\cdot))$ may have limitations in modeling some real applications. For example, under this model, the constraints of the lower-level optimization problem in a bi-level optimization problem do not depend on upper-level decision variables.
Consequently, research on \eqref{eq:1equation constraint} with the PVI defined on a moving set, especially in the context of algorithmic development, remains relatively scarce.
It is noteworthy that \cite{ye1997necessary} investigates the necessary optimality conditions for \eqref{eq:1equation constraint} but does not explore numerical methods,
while~\cite{cui2023complexity} considers a stochastic version.

Faced with these challenges,
the aforementioned fixed-point reformulation of the PVI allows us to leverage the properties of the projection operator, enabling efficient handling of the moving set.
However, the direct challenge after reformulation stems from the nonsmooth equality constraint caused by the projection operator onto a moving set.
As a result, gradient-based algorithms~\cite{fung2022jfb,liu2022optimization,liu2023hierarchical} %
designed for smooth fixed-point constraints cannot be directly applied. Furthermore, while some works such as~\cite{mckenzie2024three} address nonsmooth cases, they remain unsuitable for our model as they only handle scenarios where the moving set reduces to a fixed set.
Additionally, smoothing approximation of the projection onto a moving set remains largely unexplored, with some existing on a fixed set~\cite{chen1998global,gabriel1997smoothing,qi2000new}.

The contributions of this work are summarized as follows.

\begin{enumerate}[(i)]
	\item
	We show that $y(\x) \in$ VI$(\Omega(\x),F(\x,\cdot))$ defines a Lipschitz continuous solution function from $\X$ to $\R^n$ and the solution set of problem~\eqref{eq:1equation constraint} is nonempty and bounded. Moreover, we show that the metric regularity of the constraints of (\ref{eq:1equation constraint}) holds automatically, which allow us to characterize stationary points without any additional assumptions.
	\item
	We extend smoothing approximation of the VI over a fixed set ${\bf \Omega}$ to  the PVI over a parametric set $\Omega(\x)$, and use a smoothing function $\Psi_\mu$ of $\Psi$ and
\begin{equation}\label{eq:1equation constraint smoothing} %
		\tag{$\mathrm{P_{\mu}}$}
		\begin{aligned}
			\min\limits_{\x \in \X, \ \y} \  & f(\x,\y)  \\[-1.3ex]
			\text{ s.t. }\ & \y = \Psi_{\mu}(\x,\y) \\
		\end{aligned}
	\end{equation}
	to approximate problem~\eqref{eq:1equation constraint}.
	We establish important
	properties of the smoothing function $\Psi_\mu$ and   relationship between problem (\ref{eq:1equation constraint smoothing}) and problem~\eqref{eq:1equation constraint}.
	
	\item
		We propose a Smoothing Implicit Gradient Algorithm (SIGA), a novel framework designed to handle the obstacles posed by moving constraints.
	With the smoothing parameter gradually diminishing to zero,
	we prove that any accumulation point of a sequence generated by SIGA is a stationary point of~\eqref{eq:1equation constraint}. %
\end{enumerate}

\textbf{Organization.}
Section~\ref{sec:2} analyzes fundamental properties of~\eqref{eq:1equation constraint}, including the solution set and optimality conditions.
Section~\ref{sec: smoothing} analyzes  the smoothing problem
(\ref{eq:1equation constraint smoothing}).
Section~\ref{sec: SIGA} presents our algorithmic framework SIGA based on the implicit function theorem with convergence guarantees.
Section~\ref{sec: numerical} demonstrates the efficiency of SIGA by
numerical examples from portfolio management.

\textbf{Notations.}
In this paper, $\|\cdot\|$ denotes the $l_2$-norm for vectors and spectral norm for matrices.
$\partial \Psi_i(\x,\y)$ and $\partial_C \Psi(\x,\y)$ represent %
the generalized gradient (Clarke subdifferential) and generalized Jacobian, following~\cite{chen1998global}: %
$\partial_C \Psi(\x,\y) = \partial \Psi_1(\x,\y) \times \partial \Psi_2(\x,\y) \times \cdots \times \partial \Psi_n(\x,\y),$ %
denoting the set of all matrices whose $i$-th column belongs to $\partial \Psi_i(\x,\y)$ for each $i$,
with $C$ signifying the Cartesian product~\cite[(7.5.18)]{facchinei2003finite}.
We use $(\x,\y)$ to denote $(\x^\top,\y^\top)^\top$ for any vectors~$\x$ and $\y$ to simplify notations.

\section{Properties of problem~\eqref{eq:1equation constraint}}\label{sec:2} %
In this section, we present some fundamental properties of  problem~\eqref{eq:1equation constraint} under the following blanket assumption of this paper.
\begin{assumption}({\bf Blanket assumption}) \label{assump: foundation}
	\begin{enumerate}[(i)]
		\item \label{assump: Lipschitz proj}
		For any $\v \in \R^n$, 		$\Proj_{\Omega(\cdot)}(\v)$ is Lipschitz continuous over $\X$.
		\item \label{assump: contraction Psi}
		There exist $\LL \in (0,1)$ and $\delta>0$, such that the function $\Psi$ defined in~\eqref{Psi} satisfies
		\[
		\|\Psi(\x,\y) - \Psi(\x,\y')\| \le \LL \|\y-\y'\|
		\]
		for any $\x \in \X$, $\y,\y' \in \R^n$.
	\end{enumerate}	
\end{assumption}
The following two propositions  give some sufficient conditions for Assumption~\ref{assump: foundation}(\ref{assump: Lipschitz proj}) and (\ref{assump: contraction Psi}), respectively.

\begin{proposition}\label{prop: Lipschitz proj} If $\Omega(\x) = \{ \y \in \R^n : A \y \ge b(\x), \y \ge \mathbf{c}\}$ is a nonempty polyhedron for any $\x \in \X$,
	where $A \in \R^{r \times n}, \mathbf{c}\in \R^n$ and $b:\X \rightarrow \R^r$ is Lipschitz continuous,
	then Assumption~\ref{assump: foundation}(\ref{assump: Lipschitz proj}) holds.
\end{proposition}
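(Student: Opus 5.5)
Fix $\v\in\R^n$. The plan is to use Lipschitz stability of parametric polyhedra (Hoffman's bound) and the nonexpansiveness of projections. The point $\Proj_{\Omega(\x)}(\v)$ is the unique solution of a strongly convex quadratic program over a polyhedron whose right-hand side moves Lipschitzly with $\x$. Throughout, write $G=\begin{pmatrix}A\\ I\end{pmatrix}$ and $h(\x)=(b(\x),\mathbf{c})$, so that $\Omega(\x)=\{\y: G\y\ge h(\x)\}$. Only the right-hand side depends on $\x$, and $h$ is Lipschitz with constant $L_b$, the Lipschitz constant of $b$.

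\emph{Step 1 (Lipschitz continuity of $\Omega$ in the Hausdorff sense).} By Hoffman's error bound, there is a constant $\kappa>0$, depending only on $G$, such that for every right-hand side $h$ with $\{\y:G\y\ge h\}\neq\emptyset$,
\[
\mathrm{dist}(\y,\{\y:G\y\ge h\})\le \kappa\,\|(h-G\y)_+\| \quad \forall\, \y.
\]
Take $\x,\x'\in\X$ and $\y\in\Omega(\x)$. Then $G\y\ge h(\x)$, so
\[
\|(h(\x')-G\y)_+\|\le\|h(\x')-h(\x)\|\le L_b\|\x-\x'\|,
\]
which gives $\mathrm{dist}(\y,\Omega(\x'))\le \kappa L_b\|\x-\x'\|$. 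By symmetry, the Hausdorff distance satisfies $\mathbb{H}(\Omega(\x),\Omega(\x'))\le \kappa L_b\|\x-\x'\|$. Nonemptiness of every $\Omega(\x)$ is assumed, which is exactly what Hoffman's bound requires.

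\emph{Step 2 (from set perturbation to projection perturbation).} Let $p=\Proj_{\Omega(\x)}(\v)$ and $p'=\Proj_{\Omega(\x')}(\v)$. Hausdorff continuity of convex sets alone only yields a Hölder-$1/2$ estimate for projections, so I would exploit the polyhedral structure instead. Write $p$ as the unique minimizer of the parametric QP $\min\{\tfrac12\|\y-\v\|^2: G\y\ge h\}$, viewed as a function of the right-hand side $h$. The map $h\mapsto p(h)$ is piecewise affine: on each active-set piece, the KKT system gives $p$ affinely in $h$ (the projection onto an affine subspace $\{G_I\y=h_I\}$ depends affinely on $h_I$). It is also continuous, since the solution of a strongly convex QP is unique and the feasible set varies continuously by Step 1. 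A continuous piecewise affine map with finitely many pieces (at most $2^{r+n}$ active sets) is globally Lipschitz. Its constant is the maximum over pieces of the norm of the corresponding matrix, e.g. $\|G_I^\top (G_IG_I^\top)^{-1}\|$ after reducing to linearly independent active rows. This constant is independent of $\v$. Composing with the Lipschitz map $\x\mapsto h(\x)$ then gives the claim.

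\emph{Main obstacle.} The delicate point is making the piecewise-affine argument rigorous when the active constraints are degenerate, i.e. their gradients are linearly dependent. I would handle this by choosing, for each solution, a linearly independent subset of active rows that still supports a KKT multiplier; such a subset exists by Carathéodory's theorem. This gives a finite selection of affine pieces covering the solution map. Then I would invoke the standard fact that a continuous selection of finitely many affine functions is Lipschitz, proved by walking along the segment between two right-hand sides.

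Alternatively, one could simply cite the known result that solutions of strongly convex QPs are Lipschitz in the right-hand side of linear constraints (Robinson / Facchinei–Pang on polyhedral multifunctions), which closes the proof directly after the reformulation.
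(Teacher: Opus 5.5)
Your proof is correct, but it takes a different route from the paper. The paper's proof is a one-line citation: it invokes \cite[Theorem 2.1]{yen1995lipschitz} to get $\|\Proj_{\Omega(\x_1)}(\v)-\Proj_{\Omega(\x_2)}(\v)\|\le L_\Omega\|b(\x_1)-b(\x_2)\|$ for all $\v$, and then uses the Lipschitz continuity of $b$. That is essentially your closing ``alternatively, cite\ldots'' remark.

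You instead derive this estimate from scratch, in three steps.
\begin{enumerate}[(a)]
\item KKT plus conic Carath\'eodory shows that $h\mapsto\Proj_{\{G\y\ge h\}}(\v)$ is, at every feasible $h$, the projection of $\v$ onto some affine subspace $\{\y: G_J\y=h_J\}$ with $G_J$ of full row rank. So the map is a selection of finitely many affine maps whose linear parts $G_J^\top(G_JG_J^\top)^{-1}$ do not depend on $\v$.
\item Hoffman's bound gives Hausdorff-Lipschitz dependence of the feasible set on $h$. Through the H\"older-$1/2$ projection estimate, this yields the continuity the selection lemma needs.
\item The segment argument turns the selection into a global Lipschitz bound.
\end{enumerate}

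Two details should be stated explicitly. First, the segment between $h(\x)$ and $h(\x')$ must stay in the set of right-hand sides with nonempty feasible set. It does, because that set, $\{G\y-s: s\ge 0\}$, is convex. Second, in the segment lemma, pieces that are distinct as affine functions of the segment parameter agree at most at one point. Hence the segment splits into finitely many subintervals, and by connectedness each carries a single piece.

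Your route buys a self-contained, elementary proof with an explicit constant $\max_J\|G_J^\top(G_JG_J^\top)^{-1}\|$ that depends only on $A$. This constant is visibly uniform in $\v$, which the paper in fact needs in Lemma~\ref{lem:proj Omega continuous}, where $\v=\Phi(\x_2,\y)$ varies with $\x_2$. Your argument also shows that the extra bound $\y\ge\mathbf c$ plays no special role. The paper's route buys brevity and rests on a more general stability theorem for VIs over parametric polyhedra.
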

\begin{proof}
From \cite[Theorem 2.1]{yen1995lipschitz}, there exists $L_\Omega>0$ such that
	$$
	\| \Proj_{\Omega(\x_1)}(\v) - \Proj_{\Omega(\x_2)}(\v) \|
	\le L_\Omega \| b(\x_1) - b(\x_2) \|
		$$ %
	for any $\v \in \R^{n}$ and $\x_1, \x_2 \in \X$. Hence Assumption~\ref{assump: foundation}(\ref{assump: Lipschitz proj}) holds by the Lipschitz continuity of $b$.
\end{proof}

\begin{proposition}\label{prop: contraction Psi}
	Suppose there exists $\bar{L} > 0$ such that
\begin{equation}\label{LipF}
\|F(\x,\y) - F(\x,\y')\| \le \bar{L} \| \y - \y' \|
\end{equation}
for any $\x \in \X, \y,\y' \in \R^n$. Then Assumption~\ref{assump: foundation}(\ref{assump: contraction Psi}) holds if
	one of the following conditions holds.
	\begin{enumerate}[(i)]
		\item \label{prop: contraction Psi strongly monotone} %
		(Strong monotonicity) There exists $\sigma>0$, such that for any $\x \in \X$ and $ \y,\y' \in \R^n$,
		$$(\y-\y')^\top (F(\x,\y) - F(\x,\y')) \ge \sigma \|\y-\y'\|^2.$$

		\item
		(Uniform P function)
There exists $\sigma>0$, such that for any $\x \in \X$ and $\y\in \R^n$,
all eigenvalues $\lambda_i(\x,\y)$, $i=1,2,\cdots,n,$ of the matrix
$
\frac{1}{2}\! \left( \nabla_{\! \y} F(\x,\y) \! + \!\! \nabla_{\! \y} F(\x,\y)^{\! \top} \right)
$
satisfy $\min\limits_{1\le i\le n} \lambda_i(\x,\y) \ge \sigma$.

		\item (Cocoerciveness) There exists $\sigma\in (0, 1/\bar{L})$, such that for any $\x \in \X, \y,\y' \in \R^n$,
$$	
(\y-\y')^\top (F(\x,\y) - F(\x,\y')) \ge \sigma \|F(\x,\y) - F(\x,\y')\|^2.
$$
	\end{enumerate}
\end{proposition}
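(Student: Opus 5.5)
The plan is to reduce Assumption~\ref{assump: foundation}(\ref{assump: contraction Psi}) to a contraction estimate for the map $\y \mapsto \Phi(\x,\y)=\y-\delta F(\x,\y)$. The projection onto the closed convex set $\Omega(\x)$ is nonexpansive, so for every $\x\in\X$ and $\y,\y'\in\R^n$
\[
\|\Psi(\x,\y)-\Psi(\x,\y')\| \le \|\Phi(\x,\y)-\Phi(\x,\y')\|.
\]
It therefore suffices, under each of the three conditions, to choose $\delta>0$ for which $\|\Phi(\x,\y)-\Phi(\x,\y')\|\le \LL\|\y-\y'\|$ with some $\LL\in(0,1)$, uniformly in $\x\in\X$. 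Throughout, write $\dd=\y-\y'$ and $\ee=F(\x,\y)-F(\x,\y')$, so that $\|\ee\|\le\bar L\|\dd\|$ by \eqref{LipF}.

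\emph{Case (i).} Expanding the square gives
\[
\|\dd-\delta\ee\|^2=\|\dd\|^2-2\delta\,\dd^\top\ee+\delta^2\|\ee\|^2\le(1-2\delta\sigma+\delta^2\bar L^2)\|\dd\|^2 .
\]
Any $\delta\in(0,2\sigma/\bar L^2)$ makes the factor lie in $[0,1)$; it is nonnegative automatically because it bounds a square, and in any case $\sigma\le\bar L$. For instance, $\delta=\sigma/\bar L^2$ gives $\LL=\sqrt{1-\sigma^2/\bar L^2}$.

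\emph{Case (ii).} I will reduce to case (i). Since $F$ is $C^1$, the mean value theorem in integral form gives
\[
\ee=\int_0^1\nabla_\y F(\x,\y'+t\dd)\,dt\;\dd ,
\]
where $\nabla_\y F$ is oriented so that this formula holds. Then
\[
\dd^\top\ee=\int_0^1 \dd^\top\tfrac12\bigl(\nabla_\y F+\nabla_\y F^\top\bigr)\dd\,dt\ge\sigma\|\dd\|^2,
\]
because the symmetric part has smallest eigenvalue at least $\sigma$. This is exactly strong monotonicity with the same $\sigma$, and case (i) applies.

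\emph{Case (iii).} Cocoercivity gives $\dd^\top\ee\ge\sigma\|\ee\|^2$, which alone does not yield a contraction. I will combine it with the strong-monotonicity-type information that $\sigma<1/\bar L$ seems intended to supply. The expansion gives
\[
\|\dd-\delta\ee\|^2\le\|\dd\|^2-(2\delta\sigma-\delta^2)\|\ee\|^2,
\]
which is only nonexpansive for $\delta\in(0,2\sigma)$. The main obstacle is obtaining a strict factor $\LL<1$: this needs a lower bound $\|\ee\|\ge c\|\dd\|$, which cocoercivity plus \eqref{LipF} do not give by themselves (take $F\equiv0$). My first attempt is to read the condition as the paper presumably intends: interpret the hypothesis $\sigma<1/\bar L$ together with cocoercivity as also implying strong monotonicity, or more precisely to use the bound $\dd^\top\ee\ge\sigma\|\ee\|^2$ in combination with a lower bound $\|\ee\|\ge\sigma'\|\dd\|$. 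Then $\|\dd-\delta\ee\|^2\le(1-(2\delta\sigma-\delta^2)\sigma'^2)\|\dd\|^2$, and the choice $\delta=\sigma$ gives $\LL=\sqrt{1-\sigma^2\sigma'^2}<1$.

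If no such lower bound can be extracted from the stated hypotheses, I will flag that case (iii) requires an additional strong monotonicity or injectivity hypothesis, or else that it only delivers nonexpansiveness. Verifying which reading is intended is the step I expect to be delicate. Cases (i) and (ii) are routine.
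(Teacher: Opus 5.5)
Your argument for (i) is the paper's: expand $\|\dd-\delta\ee\|^2$, bound the cross term by strong monotonicity and the quadratic term by the Lipschitz bound on $F(\x,\cdot)$, and take $\delta\in(0,2\sigma/\bar L^2)$. For (ii) you take a different route that is equally valid. The paper bounds the Jacobian pointwise, $\|I-\delta\nabla_\y F(\x,\y)\|^2\le 1-2\delta\sigma+\delta^2\bar L^2$, using $\v^\top\nabla_\y F\,\v\ge\sigma$ and $\|\nabla_\y F\|\le\bar L$ for unit $\v$, and then passes to a Lipschitz bound on $\Phi(\x,\cdot)$. You instead integrate the Jacobian along the segment, show that (ii) implies strong monotonicity with the same $\sigma$, and invoke (i). Your route shows explicitly that (ii) is a special case of (i) and needs no separate estimate. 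The paper's route gives the operator-norm bound on $\nabla_\y\Phi$ directly. Both yield the same constant.

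On (iii) your diagnosis is correct, and you should state it as a conclusion rather than look for an ``intended reading''. No reading of the stated hypotheses yields strong monotonicity, and the claim is false. Your example $F\equiv 0$ refutes the conclusion itself, not just the auxiliary lower bound. It satisfies the Lipschitz bound and is cocoercive for every $\sigma\in(0,1/\bar L)$, yet $\Psi(\x,\cdot)=\Proj_{\Omega(\x)}$ for every $\delta$. That map is the identity on $\Omega(\x)$, so no $\LL<1$ exists once $\Omega(\x)$ has two points; indeed every point of $\Omega(\x)$ then solves the VI.

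The paper's proof of (iii) breaks at exactly the step you isolated. After $-2\delta\,\dd^\top\ee\le-2\delta\sigma\|\ee\|^2$, it replaces $\|\ee\|^2$ by $\bar L^2\|\dd\|^2$ in the term $\delta(\delta-2\sigma)\|\ee\|^2$, whose coefficient is negative for $\delta\in(0,2\sigma)$. That substitution needs $\|\ee\|\ge\bar L\|\dd\|$, the reverse of the hypothesis. The condition $\sigma<1/\bar L$ only serves to make the resulting invalid factor $1-\bar L^2\delta(2\sigma-\delta)$ positive.

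Your repair with a uniform bound $\|\ee\|\ge\sigma'\|\dd\|$ is correct, with two caveats. First, mere injectivity would not do: $F(\x,y)=\arctan y$ with $\Omega(\x)=\R$ is injective, $1$-Lipschitz and cocoercive with $\sigma=1/2$, yet $\Psi(\x,\cdot)$ has Lipschitz constant at least $1$ for every $\delta>0$. Second, under cocoercivity the uniform bound gives $\dd^\top\ee\ge\sigma(\sigma')^2\|\dd\|^2$, so the repaired (iii) is just a special case of (i).
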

\begin{proof}
	Since $\Omega(\x)$ is nonempty closed and convex for any $\x \in \X$,
	the operator $\Proj_{\Omega(\x)}(\cdot)$ is nonexpansive.
	Hence,
	as $\Psi$ is the composition of the projection operator and $\Phi$,
	it suffices to prove that
	$\Phi(\x,\cdot)$ is $\LL$-Lipschitz over $\R^n$ for any $\x \in \X$ under one of the three conditions.	
	
	(i) Let $$\LL = \sqrt{ 1-2\delta \sigma + \delta^2 \bar{L}^2 },$$
	with $0 < \delta < 2 \sigma/\bar{L}^2$.
		Then from
$$\|\y-\y'\|\|F(\x,\y) - F(\x,\y')\|\ge (\y-\y')^\top (F(\x,\y) - F(\x,\y')) \ge \sigma \|\y-\y'\|^2,$$
and the Lipschitz condition of $F$, we have $\bar{L} \ge \sigma$, which implies that
		$\LL \in (0,1)$.
		From~\cite[(1.9)]{chen1998global2},
		we have $\Phi(\x,\cdot)$ is $\LL$-Lipschitz over $\R^n$ for any $\x \in \X$.

	(ii)
	Let $\LL = \sqrt{ 1 - 2\delta \sigma + \delta^2 \bar{L}^2 }$ with $0 < \delta < 2 \sigma / \bar{L}^2$.
	For any $\x \in \X$, $\y \in \R^n$, and $\v \in \R^n$ with $\|\v\|=1$,
	it holds that
	\[
	\v^\top \nabla_{\y} F(\x,\y) \v
	= \v^\top \frac{\nabla_{\y} F(\x,\y) + \nabla_{\y} F(\x,\y)^\top}{2} \v \ge \sigma.
	\]
	From the Lipschitz condition of $F$, we have
	$\bar{L} \ge \sigma$,
	which implies that $\LL \in (0,1)$.
	Meanwhile,
	\[
	\begin{aligned}
		& (I - \delta \nabla_{\y} F(\x,\y))^\top (I - \delta \nabla_{\y} F(\x,\y)) \\
		= & I - 2 \delta \cdot \frac{\nabla_{\y} F(\x,\y) + \nabla_{\y} F(\x,\y)^\top}{2} + \delta^2 \nabla_{\y} F(\x,\y)^\top \nabla_{\y} F(\x,\y),
	\end{aligned}
	\]
	which indicates that
	$$
	\begin{aligned}
		& \|\nabla_{\y} \Phi(\x,\y) \|
		= \| I - \delta \nabla_{\y} F(\x,\y) \|
		\le \sqrt{ 1 - 2\delta \sigma + \delta^2 \bar{L}^2 }.
	\end{aligned}
	$$
		This guarantees that $\Phi(\x,\cdot)$ is $\LL$-Lipschitz over $\R^n$ for any $\x \in \X$.
		
		(iii)
		In this case, for any $\x \in \X, \y,\y' \in \R^n$,
		\[
		\begin{aligned}
			&\|\Phi(\x,\y) - \Phi(\x,\y')\|^2 \\ %
			&= \|\y-\y'\|^2 - 2\delta(\y-\y')^\top (F(\x,\y) - F(\x,\y'))
			+ \delta^2 \|F(\x,\y) - F(\x,\y')\|^2 \\
			&\le \left(1 + \bar{L}^2 \delta (- 2\sigma + \delta) \right) \|\y-\y'\|^2.
		\end{aligned}			
		\]
		Hence,
		for $0<\delta <2\sigma$, $\LL: = 1 + \bar{L}^2\delta (- 2 \sigma + \delta)\in (0,1) $.
  		Then we complete the proof.
\end{proof}

\subsection{Solution existence to \eqref{eq:1equation constraint}}\label{sec: existence of solution}
In this subsection, we show that problem~\eqref{eq:1equation constraint} has  at least one solution $(\x^*,\y^*)$ under Assumption~\ref{assump: foundation}(\ref{assump: Lipschitz proj})(\ref{assump: contraction Psi}).

\begin{lemma}\label{lem:proj Omega continuous}
	For any given $\y \in \R^n$, $\Psi(\cdot,\y)$ is Lipschitz continuous over $\X$.
\end{lemma}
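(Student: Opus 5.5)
Fix $\y \in \R^n$. By the definition in \eqref{Psi}, $\Psi(\x,\y) = \Proj_{\Omega(\x)}(\Phi(\x,\y))$, so for $\x_1,\x_2 \in \X$ I would use the triangle inequality with the intermediate point $\Proj_{\Omega(\x_2)}(\Phi(\x_1,\y))$:
\[
\|\Psi(\x_1,\y)-\Psi(\x_2,\y)\| \le \|\Proj_{\Omega(\x_1)}(\Phi(\x_1,\y)) - \Proj_{\Omega(\x_2)}(\Phi(\x_1,\y))\| + \|\Proj_{\Omega(\x_2)}(\Phi(\x_1,\y)) - \Proj_{\Omega(\x_2)}(\Phi(\x_2,\y))\|.
\]
This splits the estimate into a moving-set part and a moving-argument part.

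The first term is handled by Assumption~\ref{assump: foundation}(\ref{assump: Lipschitz proj}) applied with the fixed vector $\v = \Phi(\x_1,\y)$. The subtle point is that the Lipschitz constant in that assumption is allowed to depend on $\v$, and here $\v$ itself varies with $\x_1$. To handle this, I would note that $\{\Phi(\x,\y) : \x \in \X\}$ is a compact set $V_\y$, since $\Phi$ is continuous and $\X$ is compact. I would then read the assumption as giving a constant that is uniform over $\v$ in bounded sets, which is the situation in Proposition~\ref{prop: Lipschitz proj}, where $L_\Omega$ does not depend on $\v$ at all.

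If only pointwise constants are available, I would first try to upgrade to local uniformity using the nonexpansiveness of the projection:
\[
\|\Proj_{\Omega(\x_1)}(\v) - \Proj_{\Omega(\x_2)}(\v)\| \le \|\Proj_{\Omega(\x_1)}(\v') - \Proj_{\Omega(\x_2)}(\v')\| + 2\|\v-\v'\|.
\]
This inequality by itself does not give Lipschitz continuity in $\x$ uniformly over $V_\y$. I therefore expect to adopt the uniform reading, namely a single constant $L_P$ valid for all $\v$, or at least for all $\v$ in bounded sets. This is the main obstacle, and I would state the interpretation explicitly.

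The second term is bounded by nonexpansiveness of $\Proj_{\Omega(\x_2)}$, which gives at most $\|\Phi(\x_1,\y)-\Phi(\x_2,\y)\| = \delta\|F(\x_1,\y)-F(\x_2,\y)\|$. Since $F$ is continuously differentiable, $\nabla_\x F(\cdot,\y)$ is bounded on the compact convex set $\X$. The mean value inequality then gives the bound $\delta \max_{\x\in\X}\|\nabla_\x F(\x,\y)\|\,\|\x_1-\x_2\|$.

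Adding the two bounds shows that $\Psi(\cdot,\y)$ is Lipschitz over $\X$ with constant $L_P + \delta\max_{\x\in\X}\|\nabla_\x F(\x,\y)\|$, which depends on $\y$, as the statement allows.
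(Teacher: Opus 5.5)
Your proposal is correct and follows the paper's proof: the same triangle-inequality split (the paper inserts $\Proj_{\Omega(\x_1)}(\Phi(\x_2,\y))$ rather than your intermediate point, which makes no difference), nonexpansiveness of the projection for the moving-argument part, a constant $L_o$ from Assumption~\ref{assump: foundation}(\ref{assump: Lipschitz proj}) for the moving-set part, and Lipschitz continuity of $\Phi(\cdot,\y)$ from continuous differentiability of $F$ on the compact set $\X$. The paper also tacitly treats $L_o$ as independent of $\v$, since its $\v=\Phi(\x_2,\y)$ moves with $\x_2$ as well, so the uniform reading you state explicitly is exactly the one the paper relies on.
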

\begin{proof} Since $F$ is continuously differentiable and $\X$ is compact, 
for any given $\y \in \R^n$, $F(\cdot, \y)$
is Lipschitz continuous over $\X$. From the definition $\Phi$ in~\eqref{Psi}, for any given $\y \in \R^n$, $\Phi(\cdot,\y)$ is Lipschitz continuous over $\X$.

From Assumption~\ref{assump: foundation}(\ref{assump: Lipschitz proj})
and~\eqref{Psi}, there exists $L_o>0$ such that
\begin{equation*}
    \begin{aligned}
        &\|\Psi(\x_1,\y) - \Psi(\x_2,\y)\|\\
\le & \| \Proj_{\Omega(\x_1)} \! \left( \Phi(\x_1, \y) \right) - \Proj_{\Omega(\x_1)}
 \! \left( \Phi(\x_2, \y) \right)\|\\
&+\|\Proj_{\Omega(\x_1)} \! \left( \Phi(\x_2, \y) \right) -
\Proj_{\Omega(\x_2)} \! \left( \Phi(\x_2, \y) \right) \|\\
\le &\| \Phi(\x_1, \y) - \Phi(\x_2, \y) \| \\   %
&+ \| \Proj_{\Omega(\x_1)} \! \left( \Phi(\x_2, \y) \right) - \Proj_{\Omega(\x_2)} \! \left( \Phi(\x_2, \y) \right) \| \\
\le & \| \Phi(\x_1, \y) - \Phi(\x_2, \y) \|+ L_o\|\x_1-\x_2\|.
    \end{aligned}
\end{equation*}
This completes the proof.
\end{proof}

\begin{lemma}\label{lem: y() Lipschitz}
	For any given $\x\in \X$, $\Psi(\x, \cdot)$ has a unique fixed point
$\y$. Moreover the solution mapping $y(\cdot)$ is Lipschitz continuous over $\X$.
\end{lemma}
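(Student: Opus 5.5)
Existence and uniqueness of the fixed point come straight from Banach's fixed point theorem applied to $\Psi(\x,\cdot)$ for each fixed $\x\in\X$. By Assumption~\ref{assump: foundation}(\ref{assump: contraction Psi}), $\Psi(\x,\cdot)$ is an $\LL$-contraction on the complete space $\R^n$ with $\LL\in(0,1)$. Hence it has exactly one fixed point, which we call $y(\x)$. By the characterization recalled in the introduction, $y(\x)$ is then the unique solution of VI$(\Omega(\x),F(\x,\cdot))$, so the solution mapping $y:\X\to\R^n$ is well defined.

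For the Lipschitz continuity, take $\x_1,\x_2\in\X$ and use the fixed point identities $y(\x_i)=\Psi(\x_i,y(\x_i))$. Inserting the intermediate term $\Psi(\x_2,y(\x_1))$ gives
\[
\|y(\x_1)-y(\x_2)\|\le \|\Psi(\x_1,y(\x_1))-\Psi(\x_2,y(\x_1))\| + \|\Psi(\x_2,y(\x_1))-\Psi(\x_2,y(\x_2))\|.
\]
The second term is at most $\LL\|y(\x_1)-y(\x_2)\|$ by the contraction property. Moving it to the left-hand side yields
\[
\|y(\x_1)-y(\x_2)\|\le \frac{1}{1-\LL}\,\|\Psi(\x_1,y(\x_1))-\Psi(\x_2,y(\x_1))\|,
\]
and it remains to bound the right-hand side by a constant times $\|\x_1-\x_2\|$.

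The main obstacle is this last bound. Lemma~\ref{lem:proj Omega continuous} gives Lipschitz continuity of $\Psi(\cdot,\y)$ only for each fixed $\y$, with a constant that may depend on $\y$, whereas here the point $y(\x_1)$ itself varies. I will handle this in two steps.

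\textbf{Step 1: $y(\X)$ is bounded.} Fix $\bar\x\in\X$ and any $\y_0$. The contraction estimate gives
\[
\|y(\x)-\y_0\|\le \frac{1}{1-\LL}\,\|\Psi(\x,\y_0)-\y_0\|.
\]
Since $\Psi(\cdot,\y_0)$ is Lipschitz on the bounded set $\X$ by Lemma~\ref{lem:proj Omega continuous}, the right-hand side is bounded over $\X$. Hence $y(\X)\subset B$ for some compact ball $B$.

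\textbf{Step 2: a uniform Lipschitz constant on $B$.} Revisit the proof of Lemma~\ref{lem:proj Omega continuous} with $\y\in B$. The term $\|\Phi(\x_1,\y)-\Phi(\x_2,\y)\| = \delta\|F(\x_1,\y)-F(\x_2,\y)\|$ is bounded by $\delta\,\sup_{\X\times B}\|\nabla_\x F\|\,\|\x_1-\x_2\|$. This supremum is finite because $F$ is $C^1$, $\X\times B$ is compact, and $\X$ is convex. The projection term requires that the constant in Assumption~\ref{assump: foundation}(\ref{assump: Lipschitz proj}) be uniform over the relevant bounded set of vectors $\v=\Phi(\x_2,\y)$, which lie in the compact set $\Phi(\X\times B)$. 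I would read the assumption, and the constant $L_o$ in Lemma~\ref{lem:proj Omega continuous}, as uniform in $\v$ on bounded sets, as is the case in Proposition~\ref{prop: Lipschitz proj}. If necessary, I would make this reading explicit.

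Combining Steps 1 and 2 gives a uniform constant $L_B$ with $\|\Psi(\x_1,\y)-\Psi(\x_2,\y)\|\le L_B\|\x_1-\x_2\|$ for all $\y\in B$. Therefore $y(\cdot)$ is Lipschitz on $\X$ with constant $L_B/(1-\LL)$.
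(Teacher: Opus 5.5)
Your proof is correct and follows the paper's argument: Banach's theorem gives existence and uniqueness, and the same contraction/triangle-inequality estimate bounds $\|y(\x_1)-y(\x_2)\|$ by $\frac{1}{1-\LL}\|\Psi(\x_1,\cdot)-\Psi(\x_2,\cdot)\|$ evaluated at one of the two fixed points. The paper stops at that bound and applies Lemma~\ref{lem:proj Omega continuous} as though its constant were independent of $\y$ (and, inside that lemma, of $\v=\Phi(\x_2,\y)$); your Steps~1--2 supply this uniformity explicitly, using boundedness of $y(\X)$, compactness, and uniformity of Assumption~\ref{assump: foundation}(\ref{assump: Lipschitz proj}) on bounded sets of $\v$, which the paper's own proof tacitly needs and Proposition~\ref{prop: Lipschitz proj} provides.
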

\begin{proof}
Under Assumption~\ref{assump: foundation}(\ref{assump: contraction Psi}),
$\Psi(\x, \cdot)$ has a unique fixed point
$\y$,
from the Banach fixed-point theorem.
Hence we can define a solution mapping $y(\cdot) : \X \rightarrow \R^n$.
For any $\x_1,\x_2 \in \X$,
from Assumption~\ref{assump: foundation}(\ref{assump: contraction Psi}),
\begin{eqnarray*}
\|y(\x_1)-y(\x_2)\|&=& \|\Psi(\x_1, y(\x_1)) - \Psi(\x_2, y(\x_2))\|\\
&\le& \LL \|y(\x_1) - y(\x_2)\| + \| \Psi(\x_1, y(\x_2)) - \Psi(\x_2, y(\x_2))\|,
\end{eqnarray*}
which implies that
$$
	\|y(\x_1) - y(\x_2)\|
	\le 1/(1 - \LL)\| \Psi(\x_1, y(\x_2)) - \Psi(\x_2, y(\x_2))\|.
$$
We complete the proof.
\end{proof}

Based on Lemma~\ref{lem:proj Omega continuous} and Lemma~\ref{lem: y() Lipschitz},
we obtain the following theorem.
\begin{theorem}\label{prop:existence of solution}
The feasible set of
problem~\eqref{eq:1equation constraint} is bounded and the solution set of problem~\eqref{eq:1equation constraint} is nonempty and bounded.
\end{theorem}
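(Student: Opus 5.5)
The plan is to use Lemma~\ref{lem: y() Lipschitz} to rewrite the feasible set of problem~\eqref{eq:1equation constraint} as the graph of a Lipschitz function over a compact set. Then both boundedness and existence of a minimizer reduce to standard compactness arguments.

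\textbf{Step 1: the feasible set is a graph.} By Lemma~\ref{lem: y() Lipschitz}, for each $\x\in\X$ the equation $\y=\Psi(\x,\y)$ has exactly one solution $y(\x)$. Hence the feasible set is
\[
\mathcal{S}:=\{(\x,\y): \x\in\X,\ \y=\Psi(\x,\y)\}=\{(\x,y(\x)) : \x\in\X\},
\]
the graph of $y(\cdot)$ over $\X$.

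\textbf{Step 2: boundedness and closedness.} Since $y(\cdot)$ is Lipschitz continuous over $\X$ (Lemma~\ref{lem: y() Lipschitz}), fix any $\bar\x\in\X$ and let $L_y$ be its Lipschitz constant. Then for every $\x\in\X$,
\[
\|y(\x)\|\le \|y(\bar\x)\|+L_y\|\x-\bar\x\|\le \|y(\bar\x)\|+L_y\,\mathrm{diam}(\X)<\infty,
\]
because $\X$ is bounded. Together with the boundedness of $\X$, this shows $\mathcal{S}$ is bounded. Moreover, $\mathcal{S}$ is the image of the compact set $\X$ under the continuous map $\x\mapsto(\x,y(\x))$, so it is compact, and in particular nonempty and closed. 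Lemma~\ref{lem:proj Omega continuous} is what underlies the Lipschitz estimate for $y(\cdot)$ in Lemma~\ref{lem: y() Lipschitz}, so it enters only through that lemma.

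\textbf{Step 3: existence and boundedness of the solution set.} Since $f$ is continuously differentiable, it is continuous on the nonempty compact set $\mathcal{S}$. By the Weierstrass theorem, $f$ attains its minimum on $\mathcal{S}$, so the solution set is nonempty. Equivalently, one can minimize the continuous function $\x\mapsto f(\x,y(\x))$ over $\X$. The solution set is a subset of $\mathcal{S}$ and is therefore bounded; it is also closed, being the preimage of the minimum value under $f$ restricted to $\mathcal{S}$.

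I do not expect a genuine obstacle. The only delicate point is confirming that the feasible set equals the graph of $y(\cdot)$, that is, that every feasible $\y$ coincides with $y(\x)$. This is exactly the uniqueness part of Lemma~\ref{lem: y() Lipschitz}, which comes from the uniform contraction in Assumption~\ref{assump: foundation}(\ref{assump: contraction Psi}).
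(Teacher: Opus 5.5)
Your proposal is correct and follows essentially the same route as the paper. Both arguments identify feasible points with the graph of the unique fixed-point map $y(\cdot)$ from Lemma~\ref{lem: y() Lipschitz}, bound $\|y(\x)\|$ using its Lipschitz constant and the boundedness of $\X$, and obtain a minimizer by continuity on a compact set. The only cosmetic difference is that the paper minimizes $h(\x)=f(\x,y(\x))$ over $\X$, which, as you note, is equivalent to applying Weierstrass to $f$ on the compact graph.
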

\begin{proof}
From  Lemma~\ref{lem: y() Lipschitz}, problem~\eqref{eq:1equation constraint} can be reformulated as the following minimization problem
\begin{equation}\label{hfun}
\min_{\x\in \X}\,\,	h(\x):= f(\x,y(\x)),
\end{equation}
where $y(\x)$ is the fixed point of $\Psi(\x,\cdot)$.

From the continuity of  $f$,
the function $h: \X \rightarrow \R$  is continuous over the compact set~$\X$, and hence
problem~\eqref{hfun} has a solution $\x^*\in \X$. Since $\Psi(\x^*,\cdot)$ is  contraction, it has a unique fixed point $\y^*$, which implies that  $(\x^*,\y^*)$
is a solution of problem~\eqref{eq:1equation constraint}.
Moreover,  the Lipschitz continuity of the fixed point mapping $y(\cdot)$ implies that for any feasible point $(\x,\y)$ of problem
\eqref{eq:1equation constraint}, $\y=y(\x)$  is the unique fixed point of $\Psi(\x, \cdot)$ and there is $L_y$ such that
$$\|\y\|\le \|\y^*\|+\|y(\x)-y(\x^*)\|\le  \|\y^*\|+L_y\|\x-\x^*\|.$$
From the boundedness of $\X$, we obtain the boundedness of the feasible set and thus the boundedness of the solution set of problem~\eqref{eq:1equation constraint}.
\end{proof}

\subsection{Optimality condition under metric regularity}\label{sec:exact penalty}

To characterize stationarity for~\eqref{eq:1equation constraint}, first
we define its feasible set as
\begin{equation}\label{eq:F def}
	\F := \left\{ (\x,\y): \x \in \X , \y = \Psi(\x,\y) \right\}.
\end{equation}
Denoting a set-valued mapping $\mathcal{H}: \R^m \times \R^n \rightrightarrows \R^{m+n}$ as
$$
\mathcal{H}(\x,\y) := \left(
\begin{aligned}
	&\x \\[-1ex]
	\y - &\Psi(\x,\y)
\end{aligned}
\right)
- \left(\begin{aligned}
	\X& \\[-1ex]
	\{\boldsymbol{\mathrm{0}}&\}
\end{aligned}\right) ,
$$ %
next we verify its Metric Regularity (MR)~\cite[Definition~3.1]{mordukhovich2018variational} holds as our Constraint Qualification (CQ) to derive the optimality condition.
\begin{definition}[MR]
	Given a point $(\x^*,\y^*) \in \F$,
	the set-valued mapping~$\mathcal{H}$ is called metrically regular around $((\x^*,\y^*), (\boldsymbol{\mathrm{0}},\boldsymbol{\mathrm{0}}))$ with modulus $\kappa >0$, if
	there exist neighborhoods $U(\x^*,\y^*), U(\boldsymbol{\mathrm{0}},\boldsymbol{\mathrm{0}}) \subset \R^{m+n}$ of $(\x^*,\y^*), (\boldsymbol{\mathrm{0}},\boldsymbol{\mathrm{0}})$, such that
	for any $(\x,\y) \in U(\x^*,\y^*)$ %
	and $\zz \in U(\boldsymbol{\mathrm{0}},\boldsymbol{\mathrm{0}})$,
	\[
	{\rm dist}((\x,\y), \mathcal{H}^{-1}(\zz)) \le \kappa \ {\rm dist}(\zz, \mathcal{H}(\x,\y)).
	\]
\end{definition}

Now we show that the MR holds automatically for the mapping $\mathcal{H}$.

\begin{proposition}
	\label{prop: metric regularity}
	The mapping
	$\mathcal{H}$ is metrically regular around $((\x^*,\y^*), (\boldsymbol{\mathrm{0}},\boldsymbol{\mathrm{0}}))$ for any $(\x^*,\y^*) \in \F$.
\end{proposition}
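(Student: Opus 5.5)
The plan is to verify metric regularity directly from the definition, with an explicit construction rather than a coderivative criterion. Fix $(\x^*,\y^*)\in\F$ and take $\zz=(\zz_1,\zz_2)\in\R^m\times\R^n$ near $(\mathbf{0},\mathbf{0})$ and $(\x,\y)$ near $(\x^*,\y^*)$. By definition, $(\x',\y')\in\mathcal{H}^{-1}(\zz)$ if and only if $\x'-\zz_1\in\X$ and $\y'-\Psi(\x',\y')=\zz_2$. Likewise,
\[
{\rm dist}(\zz,\mathcal{H}(\x,\y))^2={\rm dist}(\x-\zz_1,\X)^2+\|\y-\Psi(\x,\y)-\zz_2\|^2 .
\]
So the task is to produce a point of $\mathcal{H}^{-1}(\zz)$ whose distance to $(\x,\y)$ is controlled by these two residuals. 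One technical point needs checking: $\Psi(\x',\cdot)$ must be defined and contractive at $\x'=\x-\zz_1+\bar\x$-type points. Since $\mathcal{H}^{-1}(\zz)$ requires only $\x'-\zz_1\in\X$, $\x'$ may lie outside $\X$. I would handle this by assuming, as is implicit in the paper, that $\Omega$ and Assumption~\ref{assump: foundation} extend to a neighborhood of $\X$. Alternatively, one can restrict $U(\mathbf{0},\mathbf{0})$ so that the relevant $\x'$ stays in a region where the contraction and Lipschitz properties hold. I expect this domain issue to be the main (if mostly technical) obstacle.

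\emph{Step 1 (the $\x$-component).} Let $\bar\x=\Proj_\X(\x-\zz_1)$ and set $\x'=\bar\x+\zz_1$. Then $\x'-\zz_1\in\X$ and $\|\x'-\x\|={\rm dist}(\x-\zz_1,\X)$.

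\emph{Step 2 (the $\y$-component via the contraction).} Consider the map $\y'\mapsto \Psi(\x',\y')+\zz_2$. By Assumption~\ref{assump: foundation}(\ref{assump: contraction Psi}) it is an $\LL$-contraction, so by the Banach fixed-point theorem it has a unique fixed point $\y'$. Thus $(\x',\y')\in\mathcal{H}^{-1}(\zz)$. Following the proof of Lemma~\ref{lem: y() Lipschitz}, I would estimate
\[
\|\y'-\y\|\le \|\Psi(\x',\y')-\Psi(\x',\y)\|+\|\Psi(\x',\y)-\Psi(\x,\y)\|+\|\Psi(\x,\y)+\zz_2-\y\| .
\]
The right-hand side is at most $\LL\|\y'-\y\|+L_\Psi\|\x'-\x\|+\|\y-\Psi(\x,\y)-\zz_2\|$. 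Here $L_\Psi$ is a Lipschitz constant of $\Psi(\cdot,\y)$, uniform for $\y$ in a bounded neighborhood of $\y^*$. It comes from Lemma~\ref{lem:proj Omega continuous} together with the uniform Lipschitz constant of $F(\cdot,\y)$ on compact sets, since $F$ is $C^1$; the constant $L_o$ from Assumption~\ref{assump: foundation}(\ref{assump: Lipschitz proj}) also needs to be uniform in $\v$ over bounded sets. Rearranging gives
\[
\|\y'-\y\|\le \frac{1}{1-\LL}\bigl(L_\Psi\,{\rm dist}(\x-\zz_1,\X)+\|\y-\Psi(\x,\y)-\zz_2\|\bigr).
\]

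\emph{Step 3 (combine).} Adding Steps 1 and 2 and using $a+b\le\sqrt2\sqrt{a^2+b^2}$ gives
\[
{\rm dist}((\x,\y),\mathcal{H}^{-1}(\zz))\le\|\x'-\x\|+\|\y'-\y\|\le\kappa\,{\rm dist}(\zz,\mathcal{H}(\x,\y)),
\]
with $\kappa=\sqrt2\,\bigl(1+(1+L_\Psi)/(1-\LL)\bigr)$. The constant $\kappa$ is independent of $(\x^*,\y^*)$ apart from the bounded neighborhood used to fix $L_\Psi$. This yields metric regularity, with neighborhoods chosen small enough that all points involved stay in the region where the uniform constants are valid.
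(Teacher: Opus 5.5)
Your proposal is correct and is essentially the paper's proof. It uses the same auxiliary point $\Proj_{\X}(\x-\zz_1)+\zz_1$, the same Banach fixed point of $\Psi(\x',\cdot)+\zz_2$, the same three-term triangle inequality with the contraction and Lemma~\ref{lem:proj Omega continuous}, and the same $\sqrt{2}$ step; your $\kappa$ is just a slightly looser form of the paper's $\sqrt{2}\,(1+L_1/(1-\LL))$ with $L_1\ge 1$. The domain and uniformity caveats you raise are genuine and are left implicit in the paper, which also applies the contraction and the Lipschitz bound at points possibly outside $\X$. Your first remedy, extending $\Omega$ and Assumption~\ref{assump: foundation} to a neighborhood of $\X$, is the right one, whereas shrinking $U(\boldsymbol{\mathrm{0}},\boldsymbol{\mathrm{0}})$ alone does not keep $\x'$ in $\X$ (e.g.\ for $\X=\{(0,0)\}$ as in the Remark, $\x'=\zz_1$).
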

\begin{proof}
	For $\zz = (\zz_1, \zz_2) \in U(\boldsymbol{\mathrm{0}},\boldsymbol{\mathrm{0}})$
	with $\zz_1 \in \R^m$ and $\zz_2 \in \R^n$, and  $(\x,\y) \in U(\x^*,\y^*)$, let
	$\bar{\x} = \Proj_{\X}(\x - \zz_1) + \zz_1$.
	Based on Assumption~\ref{assump: foundation}(\ref{assump: contraction Psi}), $\Psi(\bar{\x},\cdot) + \zz_2$ is a contraction over $\R^n$,
	so its fixed point exists and is unique, denoted as $\bar{\y}$,
	i.e., $\bar{\y} = \Psi(\bar{\x},\bar{\y}) + \zz_2$. Hence
	\[(\bar{\x},\bar{\y}) \in
		\mathcal{H}^{-1}(\zz): = \left\{ (\bar{\x},\bar{\y}) \in \R^m \times \R^n: \bar{\x} - \zz_1 \in \X, \bar{\y} = \Psi(\bar{\x},\bar{\y}) + \zz_2 \right\}.
	\]
Moreover, 	$\|\x - \bar{\x}\| = {\rm dist}(\x - \zz_1, \X)$,
	and
	\[
		\| \y - \bar{\y} \|
		\le \|\y - \Psi(\x,\y) - \zz_2\| + \|\Psi(\x,\y) - \Psi(\bar{\x},\y) \| + \| \Psi(\bar{\x},\y) - \Psi(\bar{\x},\bar{\y}) \| .
	\]
	From Lemma~\ref{lem:proj Omega continuous}, there exists $L_1 \ge 1$ such that $\Psi(\cdot,\y)$ is $L_1$-Lipschitz over $\X$ for any~$\y$ in the neighborhood of $\y^*$,
	so from Assumption~\ref{assump: foundation}(\ref{assump: contraction Psi}),
	we obtain $
	(1 - \LL) \| \y - \bar{\y} \|
	\le \|\y - \Psi(\x,\y) - \zz_2\| + L_1 \| \x - \bar{\x} \|,
	$
	and hence,
	\[
	\begin{aligned}
		{\rm dist}((\x,\y), \mathcal{H}^{-1}(\zz))
		& \le \| \x - \bar{\x} \| + \| \y - \bar{\y} \|  \\
		& \le \left(1 + \tfrac{L_1}{1 - \LL}\right) {\rm dist}(\x - \zz_1, \X) + \tfrac{1}{1 - \LL} \|\y - \Psi(\x,\y) - \zz_2\|\\
	&\le\left(1 + \tfrac{L_1}{1 - \LL}\right) \big( {\rm dist}(\x - \zz_1, \X) + \|\y - \Psi(\x,\y) - \zz_2\|\big)\\
& \le \sqrt{2} \left(1 + \tfrac{L_1}{1 - \LL}\right)\inf_{\x' \in \X} \left\|
		\left(\begin{aligned}
			\zz_1 \\[-1ex]
			\zz_2
		\end{aligned}\right)
		- \left(\begin{aligned}
			&\x - \x' \\[-1ex]
			\y &- \Psi(\x,\y)
		\end{aligned}\right)
		\right\| \\
&= \sqrt{2}\left(1 + \tfrac{L_1}{1 - \LL}\right) \ {\rm dist}(\zz, \mathcal{H}(\x,\y)).
\end{aligned}
	\]
	This means that  $\mathcal{H}$ is  metrically regular around $((\x^*,\y^*), (\boldsymbol{\mathrm{0}},\boldsymbol{\mathrm{0}}))$ with modulus $\kappa = \sqrt{2} (1 + \tfrac{L_1}{1 - \LL})$.
\end{proof}

Proposition~\ref{prop: metric regularity} verifies the MR with the moving set $\Omega(\x)$, which serves as an extension to existing works on a fixed set $\Omega(\x)\equiv{\bf \Omega}$ for any $\x\in \Omega$.
In~\cite{ye2000constraint}, for the MR of the constraint system of~\eqref{eq:1equation constraint} with $\Omega(\x)\equiv{\bf \Omega}$,
the strong regularity of the PVI in the sense of Robinson (see~\cite{robinson1980strongly} and~\cite[Chapter 4.2]{luo1996mathematical}),
is introduced as a sufficient condition for a Lipschitz continuous single-valued solution mapping $y(\cdot)$~\cite[Theorem 2.1, Corollary 2.2]{robinson1980strongly},
and can be guaranteed if $F(\x,\cdot)$ is strongly monotone for any $\x \in \X$.
Specifically,~\cite[Theorem 4.7]{ye2000constraint} proves under the Robinson strong regularity condition, the no nonzero abnormal multiplier CQ (NNAMCQ) holds,
which derives the MR of constraint system~\cite[Remark 2]{ye2000multiplier}.
However,
discussions in~\cite{robinson1980strongly} and~\cite{ye2000constraint} were restricted to $\Omega(\x)\equiv{\bf \Omega}$.
Additionally,~\cite[Chapter 4.4]{luo1996mathematical} introduces the MR with some brief discussion
on its sufficient conditions.
In contrast, our analysis in this work not only covers the discussion from strong monotonicity to the MR in~\cite{ye2000constraint},
but also substantially extends it to~\eqref{eq:1equation constraint} with the PVI on a moving set. %
To be specific, in Proposition~\ref{prop: contraction Psi}(\ref{prop: contraction Psi strongly monotone}),
the strong monotonicity of $F(\x,\cdot)$ guarantees a contraction $\Psi(\x,\cdot)$ for any $\x \in \X$, then
in Lemma~\ref{lem: y() Lipschitz} the Lipschitz property of the fixed point mapping $\y(\cdot)$ of $\Psi(\x,\cdot)$ is derived,
and in Proposition~\ref{prop: metric regularity} the MR is verified without any additional conditions.

Based on Proposition~\ref{prop: metric regularity}, we can derive the optimality condition of~\eqref{eq:1equation constraint} with the MR as our CQ.
Most literature assumes some CQs on the constraint system to establish optimality conditions, where the MR is a standard assumption~\cite{ye2000multiplier}.
 Some CQs on nonlinear constraints are hard to verify.
Fortunately, our blanket Assumption \ref{assump: foundation} is sufficient for
the MR on the constraints of problem~\eqref{eq:1equation constraint}, which is easy to verify.

	\begin{definition}[Stationary point of problem \eqref{eq:1equation constraint}]\label{def: stationary point}
		We call $(\x^*,\y^*)$ a stationary point of~\eqref{eq:1equation constraint}
		if $(\x^*,\y^*) \in \F$ and
		there exist a multiplier $\lambda^* \in \R^n$ and a matrix $\Gamma \in \partial_C \Psi(\x^*,\y^*)$ %
		such that
		\begin{equation}\label{eq:stationary point}
			\boldsymbol{\mathrm{0}} \in \nabla f(\x^*,\y^*) +\left(
			\begin{aligned}
				& \boldsymbol{\mathrm{0}} \\[-1ex]
				& \lambda^* \\
			\end{aligned}
			\right) - \Gamma \lambda^* + \left(\begin{aligned}
				&\N_{\X}(\x^*) \\[-1ex]
				&\quad \{\boldsymbol{\mathrm{0}}\}
			\end{aligned}\right).
		\end{equation}
	\end{definition}

	\begin{theorem}[Optimality condition under the MR] \label{thm: stationary point}
		If %
		$(\x^*,\y^*)$ is a local minimizer of problem~\eqref{eq:1equation constraint}, %
		then it is a stationary point of~\eqref{eq:1equation constraint}.
	\end{theorem}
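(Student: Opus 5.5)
The plan is to combine the metric regularity of Proposition~\ref{prop: metric regularity} with an exact penalty argument. This turns the local minimizer into a local minimizer of a Lipschitz penalized problem over $\X\times\R^n$. We then apply the Clarke subdifferential calculus for Lipschitz functions.

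\emph{Step 1 (exact penalty).} Since $\mathcal{H}$ is metrically regular around $((\x^*,\y^*),(\boldsymbol{\mathrm{0}},\boldsymbol{\mathrm{0}}))$ with modulus $\kappa$, we take $\zz=(\boldsymbol{\mathrm{0}},\boldsymbol{\mathrm{0}})$. For $(\x,\y)\in (\X\times\R^n)\cap U(\x^*,\y^*)$ this gives
\[
{\rm dist}((\x,\y),\F)\le \kappa\,\|\y-\Psi(\x,\y)\|.
\]
Because $f$ is $C^1$, it is Lipschitz with some constant $L_f$ near $(\x^*,\y^*)$. Clarke's exact penalty principle then shows that $(\x^*,\y^*)$ is a local minimizer of
\[
f(\x,\y)+\rho\,\|\y-\Psi(\x,\y)\| \quad \text{over } \X\times\R^n, \qquad \rho> \kappa L_f .
\]
Concretely, let $(\x,\y)$ be a nearby point and $(\hat\x,\hat\y)\in\F$ a nearest feasible point, which is also close to $(\x^*,\y^*)$. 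Then $f(\x,\y)\ge f(\hat\x,\hat\y)-L_f\,{\rm dist}\ge f(\x^*,\y^*)-\rho\|\y-\Psi(\x,\y)\|$.

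\emph{Step 2 (Fermat rule and calculus).} The function $\Psi$ is locally Lipschitz jointly in $(\x,\y)$. This follows from Lemma~\ref{lem:proj Omega continuous} together with Assumption~\ref{assump: foundation}(\ref{assump: contraction Psi}), using the triangle inequality as in the proof of Proposition~\ref{prop: metric regularity}. Hence the penalty term $g(\x,\y):=\|\y-\Psi(\x,\y)\|$ is locally Lipschitz. The Clarke Fermat rule and the sum rule give
\[
\boldsymbol{\mathrm{0}}\in\nabla f(\x^*,\y^*)+\rho\,\partial g(\x^*,\y^*)+\N_{\X}(\x^*)\times\{\boldsymbol{\mathrm{0}}\}.
\]
Write $G(\x,\y)=\y-\Psi(\x,\y)$. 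At $(\x^*,\y^*)$ we have $G=\boldsymbol{\mathrm{0}}$, so $g=\|\cdot\|\circ G$. The chain rule for a convex outer function composed with a Lipschitz map, in the componentwise form (Clarke, Theorem~2.6.6, or via $\|\mathbf{u}\|=\max_{\|\mathbf{w}\|\le1}\mathbf{w}^\top\mathbf{u}$ and the generalized Jacobian of $G$), gives
\[
\partial g(\x^*,\y^*)\subset \Big\{\textstyle\sum_i w_i\,\partial G_i(\x^*,\y^*):\ \|\mathbf{w}\|\le 1\Big\}.
\]
Here $\partial G_i=(\boldsymbol{\mathrm{0}},\ee_i)-\partial\Psi_i$. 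Set $\lambda^*=\rho\mathbf{w}$ and choose for each $i$ an element $\gamma_i\in\partial\Psi_i(\x^*,\y^*)$. Then $\Gamma=[\gamma_1,\dots,\gamma_n]\in\partial_C\Psi(\x^*,\y^*)$, and the inclusion becomes exactly \eqref{eq:stationary point}.

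\emph{Main obstacle.} The delicate step is the chain rule in Step~2. The nonsmooth norm is composed with a vector-valued nonsmooth map, and we need the resulting multiplier structure to fit the Cartesian-product Jacobian $\partial_C\Psi$ of the paper rather than Clarke's generalized Jacobian. I would avoid any inclusion between the two Jacobians. Instead I would write
\[
\rho g(\x,\y)=\max_{\|\mathbf{w}\|\le1}\sum_i \rho w_i G_i(\x,\y)
\]
and use the Clarke subdifferential of a pointwise max of Lipschitz functions, combined with the sum rule $\partial(\sum_i w_iG_i)\subset\sum_i w_i\partial G_i$. This sum rule acts only on scalar components, so it produces a column-by-column choice of $\gamma_i$, which is exactly the structure of $\partial_C\Psi$. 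The sign of $w_i$ needs no special care, since $\partial(w_iG_i)=w_i\partial G_i$ holds for any real $w_i$. A secondary check is that the max over the compact ball satisfies the regularity hypotheses of the max rule; in particular, the active set at $G=\boldsymbol{\mathrm{0}}$ is the whole ball.
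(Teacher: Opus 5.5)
\paragraph{The gap is in Step~2.} The inclusion $\partial g(\x^*,\y^*)\subset S:=\{\sum_i w_i\,\partial G_i(\x^*,\y^*):\|\mathbf w\|\le 1\}$ is false in general. The chain rule you cite and the pointwise-max rule both give only $\partial g(\x^*,\y^*)\subset{\rm co}\,S$, and you dropped the convex hull. It cannot be dropped, because $S$ is not convex.

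This is exactly where the sign of $w_i$ matters. Averaging $w_i\zeta_i$ and $w_i'\zeta_i'$ with $w_i>0>w_i'$ and $\zeta_i\neq\zeta_i'$ gives $\alpha\zeta_i-\beta\zeta_i'$ with $\alpha,\beta>0$. In general this is not a multiple of any element of $\partial G_i$.

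It already happens in the setting of the paper. Take $m=n=1$, $\X=[-1,1]$, $\Omega(x)=[x,x+10]$ and $F(x,y)=y/(2\delta)$, so that $\Phi(x,y)=y/2$. Assumption~\ref{assump: foundation} holds with $\LL=1/2$, $(0,0)\in\F$, and near the origin $G(x,y)=y-\max\{x,y/2\}$. Then $\partial G(0,0)={\rm co}\{(-1,1),(0,\tfrac12)\}$. Meanwhile $g=|G|$ has gradients $(-1,1)$, $(1,-1)$, $(0,\tfrac12)$, $(0,-\tfrac12)$ on the regions $\{y/2<x<y\}$, $\{x>\max\{y,y/2\}\}$, $\{x<y/2,\,y>0\}$, $\{x<y/2,\,y<0\}$, all of which touch the origin. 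Hence $(-\tfrac12,\tfrac14)=\tfrac12(-1,1)+\tfrac12(0,-\tfrac12)\in\partial g(0,0)$.

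However, every element of $S$ has the form $w\zeta=(-ws,\,w(1+s)/2)$ with $s\in[0,1]$ and $|w|\le 1$. The equations $ws=\tfrac12$ and $w(1+s)=\tfrac12$ force $w=0$, a contradiction. So the element of $\rho\,\partial g$ supplied by the Fermat rule need not have the form $\sum_i\lambda_i\zeta_i$ with $\zeta_i\in\partial G_i$. The step ``set $\lambda^*=\rho\mathbf w$'' then has nothing to act on. Subdifferentiating the penalty at its kink $G=\boldsymbol{\mathrm{0}}$ convexifies over multipliers of opposite signs, and that destroys exactly the structure \eqref{eq:stationary point} needs.

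\paragraph{The missing idea.} You need to scalarize with one fixed multiplier instead of subdifferentiating $\|G\|$ at $\boldsymbol{\mathrm{0}}$. The paper does this by invoking a multiplier rule with MR as the constraint qualification. That rule directly gives $\boldsymbol{\mathrm{0}}\in\nabla f(\x^*,\y^*)+\sum_i\lambda_i^*\partial G_i(\x^*,\y^*)+\N_{\X}(\x^*)\times\{\boldsymbol{\mathrm{0}}\}$. One way to obtain it:
\begin{enumerate}[(a)]
\item local optimality gives $\boldsymbol{\mathrm{0}}\in\nabla f(\x^*,\y^*)+\N_{\F}(\x^*,\y^*)$ for the limiting normal cone;
\item MR gives $\N_{\F}(\x^*,\y^*)\subset\bigcup_{\lambda}\partial\langle\lambda,G\rangle(\x^*,\y^*)+\N_{\X}(\x^*)\times\{\boldsymbol{\mathrm{0}}\}$, via the scalarization of the coderivative of the Lipschitz map $G$;
\item finally $\partial\langle\lambda,G\rangle\subset\sum_i\lambda_i\partial G_i$ for that fixed $\lambda$.
\end{enumerate}

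Alternatively, keep your Step~1; its inequality is precisely Clarke's calmness. Combine it with Clarke's Fritz John multiplier rule for Lipschitz equality constraints. Its Ekeland-based proof subdifferentiates the outer function only away from its kink, so each $G_i$ enters with a single scalar multiplier. This yields $\partial\langle\lambda,G\rangle$ rather than ${\rm co}\,S$, and calmness excludes the abnormal case.

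Once that inclusion is available, your column-by-column assembly of $\Gamma\in\partial_C\Psi(\x^*,\y^*)$ from $\partial G_i=(\boldsymbol{\mathrm{0}},\ee_i)-\partial\Psi_i$ is exactly the paper's final step.
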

	\begin{proof}
		From Proposition~\ref{prop: metric regularity},
		there exists $\lambda^* \in \R^n$, such that
		\[
		\boldsymbol{\mathrm{0}} \in \nabla f(\x^*,\y^*) +  \sum_{i=1}^{n} \lambda_i^* \partial \left( \y_i^* - \Psi_i(\x^*,\y^*) \right) + \N_{\FF}(\x^*,\y^*), %
		\]
		where %
		$\FF = \X \times \R^n$.
		Note that $\Psi$ is locally Lipschitz by Assumption~\ref{assump: foundation}(\ref{assump: contraction Psi}) and Lemma~\ref{lem:proj Omega continuous}.
		By~\cite[Proposition 1.4]{mordukhovich2018variational},
		$\N_{\FF}(\x^*,\y^*)  = \N_{\X}(\x^*) \times \{\boldsymbol{\mathrm{0}}\}$.
		According to~\cite[Proposition 2.3.1, 2.3.3, Corollary 1]{clarke1990optimization},
		$$\partial\big(\y_i^* - \Psi_i(\x^*,\y^*) \big) = %
		\left(\begin{aligned}
			& \quad \boldsymbol{\mathrm{0}} \\[-1ex]
			& \nabla_{\y} \y_i^*
		\end{aligned}\right)
		+ \partial \left( - \Psi_i(\x^*,\y^*) \right)
		= \left(\begin{aligned}
			& \quad \boldsymbol{\mathrm{0}} \\[-1ex]
			& \nabla_{\y} \y_i^*
		\end{aligned}\right) - \partial \Psi_i(\x^*,\y^*),
		$$
		where $\nabla_{\y} \y_i^*$ is a vector whose i-th component is one and others are zero.
		Then by
		\[
		\begin{aligned}
			\sum_{i=1}^{n} \lambda_i^* \left( %
			\left(\begin{aligned}
				& \quad \boldsymbol{\mathrm{0}} \\[-1ex]
				& \nabla_{\y} \y_i^*
			\end{aligned}\right)
			- \partial \Psi_i(\x^*,\y^*) \right)
			= \left\{  \left(
			\begin{aligned}
				& \boldsymbol{\mathrm{0}} \\[-1ex]
				& \lambda^* \\
			\end{aligned}
			\right) - \Gamma \lambda^* : \Gamma \in \partial_C \Psi(\x^*,\y^*) \right\}, \\
		\end{aligned}
		\]
		we can conclude there exists $\Gamma \in \partial_C \Psi(\x^*,\y^*)$, %
		such that (\ref{eq:stationary point}) holds.
	\end{proof}

	\begin{remark}		
		It is worth noting that a multiplier $\lambda^* \in \R^n$ always exists such that the first-order optimality condition~\eqref{eq:stationary point} holds.
		However, some usual CQ assumptions may fail.

		For instance,
		suppose $\X = \left\{ \x: \gg(\x) \le \boldsymbol{\mathrm{0}} \right\}$ where $\gg:\R^2 \rightarrow \R^3$ %
		is defined as follows: %
		\[
			\gg_1(\x) = \x_1^3 - \x_2,\,\, \gg_2(\x) = \x_2,\,\, \gg_3(\x) = - \x_1^3 - \x_2.
		\]
		Let
		$F: \R^2 \times \R \rightarrow \R$
		be defined as $F(\x,\yy) = \x_1 \x_2 + \yy$
		and $\Omega(\x) = [-1 + \x_1,  1 + \x_1]$.
		Then %
		the feasible set is
		$$
		\begin{aligned}
			\F = \Big\{ (\x,\yy):
			g(\x) \le \boldsymbol{\mathrm{0}},
			\yy= \mid\big\{ -1 + \x_1,  1 + \x_1, \yy - \delta( \x_1 \x_2 + \yy) \big\}\Big\}.
		\end{aligned}		
		$$
		When $0 < \delta < 2$, all constraints are active at the only feasible point $(\x^*,\yy^*) = (0,0,0)$.
		It is also easy to check that all constraints are continuously differentiable at the feasible point,
		the corresponding equality constraint is
		\[
			\yy - \left( \yy - \delta( \x_1 \x_2 + \yy) \right) = 0 ,
		\]
		and the corresponding gradient matrix is
		\[
		\left(\begin{aligned}
			0 && -1 && 0 \\
			0 && 1 && 0 \\
			0 && -1 && 0\\
			0 && 0 && \delta
		\end{aligned}\right).
		\]

		If we consider the feasible set as
		$\F = \left\{ (\x,\yy): \gg(\x) \le \boldsymbol{\mathrm{0}} , \yy = \Psi(\x,\yy) \right\}$,
		 the usual Linear Independence CQ (LICQ), Mangasarian-Fromovitz CQ (MFCQ), and Constant Rank CQ (CRCQ)~\cite{facchinei2003finite} do not hold at the feasible point.
		To be specific,
		since the rows of the gradient matrix are linearly dependent, LICQ does not hold.
		For MFCQ to hold, a strict feasible direction $\dd \in \R^3$
		should satisfy $\nabla \gg_j(\x^*,\yy^*) \dd > 0 $ for $j = 1,2,3$,
		deriving $- \dd_2 > 0$ and $\dd_2 > 0$.
		But such direction does not exist, which violates MFCQ.
		As for CRCQ, the rank of $\left\{\nabla \gg_j (\x,\yy)\right\}_{j \in \{1,2\}}$
		at $(\x^*,\yy^*)$ is $1$,
		while the rank at $(\omega,0,0)$ is $2$ for any sufficiently small $\omega > 0$,
		according to $\nabla \gg_1 (\omega,0,0) = (3 \omega^2, -1,0)$
		and $\nabla \gg_2 (\omega,0,0) = (0, 1,0)$,	
		so the desired neighborhood of $(\x^*,\yy^*)$ for CRCQ does not exist.

		Contrarily, if we consider the feasible set as
		$\F = \left\{ (\x,\yy): \x \in \X , \yy = \Psi(\x,\yy) \right\}$
		where $$\X = \left\{ \x: \gg(\x) \le \boldsymbol{\mathrm{0}} \right\} = \{(0,0)\},$$
		the MR holds automatically as our CQ from the contraction property of  $\Psi(\x,\cdot)$.
		This demonstrates the superiority of our characterization of the feasible set with a set-valued mapping.
		Furthermore, it is worth noting that under our model, $\X$ is not necessarily to be defined by functions.
	\end{remark}

	\section{Smoothing approximation}\label{sec: smoothing}
	Smoothing methods for VI$(\Omega(\x),F(\x,\cdot))$ with $\Omega(\x)\equiv {\bf \Omega}$ can be funded in existing literature, for example \cite{chen2012smoothing,chen1998global}.
	To address the nonsmoothness of $\Psi$ from the projection operator on a moving set
$\Omega(\x),$
	which introduces nonsmoothness of the solution mapping $\y(\cdot)$,
	we consider a smoothing approximation~$\Psi_{\mu}$ of $\Psi$
	and examine its impact on solution properties and consistency.
	
	\begin{definition}[Smoothing approximation] \label{definition:smoothing}
		For any $\mu \in (0,1]$,
		we call $\Psi_{\mu}: \X \times \R^n  \rightarrow \R^n$ a smoothing approximation of $\Psi$,
		if it satisfies
		\begin{enumerate}[(i)]
			\item \label{assump: Psi mu continuously differentiable}
			$\Psi_{\mu}(\cdot,\cdot)$ is continuously differentiable on $\X \times \R^n$; %
			\item \label{assump: Psi_mu limit}
			$\lim\limits_{\widetilde{\x} \rightarrow \x , \widetilde{\y} \rightarrow \y, \mu \downarrow 0} \Psi_{\mu}(\widetilde{\x},\widetilde{\y}) = \Psi(\x,\y)$,
			for any $\x \in \X$, $\y \in \R^n$; %
			\item \label{assump: Psi_mu contraction and Lip}
			$\Psi_{\mu}(\x,\cdot)$ is $L_s$-Lipschitz over $\R^n$ for any $\x \in \X$ with $L_s\in (0,1)$. %
		\end{enumerate}
		
	\end{definition}

	If for any $\x \in \X$, $\Phi(\x,\cdot)$ is a contraction,
	and the smoothing approximation preserves the nonexpansiveness of the projection operator,
	then similar to Proposition~\ref{prop: contraction Psi}, $\Psi_{\mu}(\x,\cdot)$ as a composition remains a contraction,
	which guarantees Definition~\ref{definition:smoothing}(\ref{assump: Psi_mu contraction and Lip}).
	We show that a class of smoothing functions satisfies  Definition~\ref{definition:smoothing}
	for $\Omega(\x) = \{ \y \in \R^n: l(\x) \le \y \le u(\x)\}$,
	where $l,u: \X \rightarrow \R^n$ are continuously differentiable,
	in Lemma~\ref{lem: smooth Psi} of Appendix~\ref{sec: smooth box}.

	From the Banach fixed-point theorem, for any $\mu \in (0,1]$ and $\x \in \X$,
	the smoothed PVI problem %
	$ \y = \Psi_{\mu}(\x,\y)$
	admits a unique solution $y_{\mu} (\x)$, leading to the well-defined mapping $y_{\mu} : \X \rightarrow \R^n$.
	For any $\x \in \X, \y \in \R^n, \mu \in (0,1]$, denote
	\begin{equation*} %
		H_{\mu}(\x,\y) := \y - \Psi_{\mu}(\x,\y).
	\end{equation*}
	From Definition~\ref{definition:smoothing}(\ref{assump: Psi_mu contraction and Lip}), all singular values of $\nabla_{\y}H_{\mu}(\x,\y) = I - \nabla_{\y} \Psi_{\mu}(\x,\y)$ have a uniform lower bound $1 - L_s > 0$.
	Hence, we have the following lemma as the foundation for our analysis related to the smoothing approximation.
	\begin{lemma}\label{lem: nonsingular}
				$\nabla_{\y}H_{\mu}(\x,\y)$ is nonsingular,
		for any $\x \in \X, \y \in \R^n, \mu \in (0,1]$.	%
	\end{lemma}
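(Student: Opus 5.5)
The plan is to show that $\nabla_{\y}H_{\mu}(\x,\y)=I-\nabla_{\y}\Psi_{\mu}(\x,\y)$ is bounded below, i.e. $\|(I-\nabla_{\y}\Psi_{\mu}(\x,\y))\dd\|\ge(1-L_s)\|\dd\|$ for every $\dd\in\R^n$. This gives injectivity, and since the matrix is square, nonsingularity follows.

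\textbf{Step 1.} Bound the Jacobian of $\Psi_\mu$ in $\y$. Fix $\x\in\X$, $\y\in\R^n$ and $\mu\in(0,1]$. By Definition~\ref{definition:smoothing}(\ref{assump: Psi mu continuously differentiable}), $\Psi_\mu(\x,\cdot)$ is differentiable at $\y$. Hence for any $\dd\in\R^n$,
\[
\nabla_{\y}\Psi_\mu(\x,\y)^\top\dd=\lim_{t\downarrow0}\frac{\Psi_\mu(\x,\y+t\dd)-\Psi_\mu(\x,\y)}{t}.
\]
Here $\nabla_{\y}\Psi_\mu$ is taken with columns being gradients, following the paper's convention; the transpose is immaterial for the norm bound. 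By Definition~\ref{definition:smoothing}(\ref{assump: Psi_mu contraction and Lip}), the norm of each difference quotient is at most $L_s\|\dd\|$. Passing to the limit gives $\|\nabla_{\y}\Psi_\mu(\x,\y)\|\le L_s<1$ in the spectral norm.

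\textbf{Step 2.} Derive the lower bound. By the triangle inequality, for any $\dd$,
\[
\|(I-\nabla_{\y}\Psi_\mu(\x,\y))\dd\|\ge\|\dd\|-\|\nabla_{\y}\Psi_\mu(\x,\y)\dd\|\ge(1-L_s)\|\dd\|.
\]
So the kernel of $\nabla_{\y}H_\mu(\x,\y)$ is trivial, and the matrix is nonsingular. Equivalently, all its singular values are at least $1-L_s$, as remarked before the lemma. One could also argue through the Neumann series $\sum_k(\nabla_{\y}\Psi_\mu)^k$, which converges because the spectral norm is below $1$.

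There is no genuine obstacle. The only point needing care is that the Lipschitz constant of Definition~\ref{definition:smoothing}(\ref{assump: Psi_mu contraction and Lip}) transfers to a bound on the derivative. This is handled by the difference-quotient limit in Step 1, which is valid because differentiability holds on all of $\X\times\R^n$. The bound does not depend on which orientation of the Jacobian is used, since a matrix and its transpose have the same spectral norm.
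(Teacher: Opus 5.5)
Your proposal is correct and follows essentially the same route as the paper: the paper observes in one line that Definition~\ref{definition:smoothing}(\ref{assump: Psi_mu contraction and Lip}) forces every singular value of $I-\nabla_{\y}\Psi_{\mu}(\x,\y)$ to be at least $1-L_s>0$. Your difference-quotient bound $\|\nabla_{\y}\Psi_\mu(\x,\y)\|\le L_s$ and the triangle-inequality estimate supply exactly the details the paper leaves implicit.
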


	Based on Lemma~\ref{lem: nonsingular} and the continuous differentiability of $H_{\mu}$,
	the implicit function theorem can be applied to obtain the following proposition.

	\begin{proposition}[Continuous differentiability of $y_{\mu}(\cdot)$] \label{prop:differentiability of y mu x}
		For any fixed $\mu \in (0,1] $, there exists a unique continuously differentiable function $y_{\mu}(\cdot)$ over $\X$, such that for any $\x \in \X$,
		\begin{equation}%
			H_{\mu}(\x,y_{\mu}(\x)) = \boldsymbol{\mathrm{0}}
			\ \text{ and } \
			\nabla y_{\mu}(\x) = - \left(\nabla_{\y} H_{\mu}(\x,y_{\mu}(\x))\right)^{-1}  \nabla_{\x} H_{\mu}(\x,y_{\mu}(\x)).
		\end{equation}
	\end{proposition}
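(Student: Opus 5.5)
The plan is a standard application of the implicit function theorem, patched together from local to global using the uniqueness supplied by the contraction property. Fix $\mu\in(0,1]$.

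\emph{Existence and uniqueness.} By Definition~\ref{definition:smoothing}(\ref{assump: Psi_mu contraction and Lip}), $\Psi_\mu(\x,\cdot)$ is an $L_s$-contraction on $\R^n$ for each $\x\in\X$. The Banach fixed-point theorem therefore gives, for each $\x$, a unique $y_\mu(\x)$ with $H_\mu(\x,y_\mu(\x))=\boldsymbol{\mathrm{0}}$. This yields a uniquely defined map $y_\mu:\X\to\R^n$, and uniqueness of any function satisfying the identity follows immediately.

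\emph{Continuity and local differentiability.} Continuity comes from the argument of Lemma~\ref{lem: y() Lipschitz}:
\[
\|y_\mu(\x_1)-y_\mu(\x_2)\|\le \tfrac{1}{1-L_s}\|\Psi_\mu(\x_1,y_\mu(\x_2))-\Psi_\mu(\x_2,y_\mu(\x_2))\|,
\]
and the right-hand side tends to zero as $\x_1\to\x_2$ because $\Psi_\mu$ is continuous. Now fix $\bar\x\in\X$ and set $\bar\y=y_\mu(\bar\x)$. The map $H_\mu$ is $C^1$ near $(\bar\x,\bar\y)$, and $\nabla_\y H_\mu(\bar\x,\bar\y)$ is nonsingular by Lemma~\ref{lem: nonsingular}. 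The classical implicit function theorem then gives neighborhoods $U\ni\bar\x$, $V\ni\bar\y$ and a $C^1$ map $g:U\to V$ such that $H_\mu(\x,g(\x))=\boldsymbol{\mathrm{0}}$, with
\[
\nabla g(\x)=-\left(\nabla_\y H_\mu(\x,g(\x))\right)^{-1}\nabla_\x H_\mu(\x,g(\x)).
\]
For $\x\in U\cap\X$, uniqueness of the fixed point forces $g(\x)=y_\mu(\x)$. Hence $y_\mu$ coincides locally with a $C^1$ function and satisfies the stated derivative formula, with the paper's transpose convention for $\nabla$ applied consistently. Continuity of $\nabla y_\mu$ then follows from continuity of $\nabla H_\mu$, continuity of $y_\mu$, and continuity of matrix inversion on nonsingular matrices.

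\emph{Main obstacle: boundary points of $\X$.} Definition~\ref{definition:smoothing}(\ref{assump: Psi mu continuously differentiable}) only gives $C^1$ on $\X\times\R^n$, so at boundary points of $\X$ the implicit function theorem needs an open domain. I would address this in one of two ways. The first is to interpret $C^1$ on the closed set as the existence of a $C^1$ extension of $\Psi_\mu$ to an open neighborhood of $\X\times\R^n$, as holds for the box construction in the appendix. Since nonsingularity is an open condition, the implicit function theorem applies to the extension, and the resulting local solution agrees with $y_\mu$ on $\X$ by uniqueness. The second, if such an extension is unavailable, is to prove differentiability relative to $\X$ directly. Expand
\[
\boldsymbol{\mathrm{0}}=H_\mu(\x,y_\mu(\x))-H_\mu(\bar\x,\bar\y),
\]
use the Lipschitz estimate on $y_\mu$ (itself derived from the contraction property together with a Lipschitz bound on $\Psi_\mu(\cdot,\y)$ obtained from its $C^1$-ness near the compact set $\X$), and invert $\nabla_\y H_\mu$ to identify the derivative. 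The remaining steps are routine.
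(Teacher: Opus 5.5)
Your proposal is correct and follows essentially the paper's route: uniqueness of $y_\mu(\x)$ from the Banach fixed-point theorem, nonsingularity of $\nabla_{\y}H_\mu$ from Lemma~\ref{lem: nonsingular}, and the implicit function theorem, which the paper invokes in a single sentence. Your local-to-global patching via fixed-point uniqueness, and your handling of boundary points of $\X$ (through a $C^1$ extension or a direct first-order expansion), spell out details that the paper leaves implicit.
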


	We now examine the consistency between problem~\eqref{eq:1equation constraint} and the smoothed problem~\eqref{eq:1equation constraint smoothing}.  %
		By the uniqueness of $y_{\mu}(\x)$ for any $\x \in \X$ and $\mu \in (0,1]$,~\eqref{eq:1equation constraint smoothing} can be reformulated as
\begin{equation}\label{shfun} %
\min_{\x\in \X}\,\,	h_\mu(\x):= f(\x,y_\mu(\x)),
\end{equation}
where $y_\mu(\x)$ is the fixed point of $\Psi_\mu(\x,\cdot)$.
The function $h_{\mu}:\X \rightarrow \R$ is smooth from Proposition~\ref{prop:differentiability of y mu x}. Moreover, similar to Theorem~\ref{prop:existence of solution}, the feasible set
\begin{equation}\label{eq:sF}
	\F_\mu := \left\{ (\x,\y): \x \in \X , \y = \Psi_\mu(\x,\y) \right\}
	= \left\{ (\x,y_{\mu}(\x)): \x \in \X \right\}
\end{equation}
 of~(\ref{eq:1equation constraint smoothing}) is bounded and  the solution set of~(\ref{eq:1equation constraint smoothing}) is nonempty and bounded, for any $\mu \in (0,1]$.
In Lemma~\ref{lem:y mu x}(\ref{lem: y mu x bounded}) we further prove that the sets $\F_\mu$ are uniformly bounded for all $\mu \in (0,1]$.
	By chain rule and Proposition~\ref{prop:differentiability of y mu x} with the help of implicit function theorem,
	we can obtain the gradient of $h_{\mu}$ as follows:
	\begin{equation} \label{eq: nabla h mu}
		\begin{split}
			&\nabla h_{\mu}(\x)\\
			& = \nabla_{\x} f(\x,y_{\mu}(\x)) + (\nabla y_{\mu}(\x))^\top \nabla_{\y} f(\x,y_{\mu}(\x)) \\
			& = \nabla_{\x} f(\x,y_{\mu}(\x)) - \left(\nabla_{\x} H_{\mu}(\x,y_{\mu}(\x))\right)^\top
			\left(\nabla_{\y} H_{\mu}(\x,y_{\mu}(\x))\right)^{-\top}
			\nabla_{\y} f(\x,y_{\mu}(\x)),
		\end{split}
	\end{equation}
	where $\nabla_{\x} H_{\mu}(\x,\y) = - \nabla_{\x}  \Psi_{\mu}(\x,\y)$,
	$\nabla_{\y} H_{\mu}(\x,\y) = I - \nabla_{\y}  \Psi_{\mu}(\x,\y)$.~\eqref{eq: nabla h mu} serves as the foundation of our gradient-based algorithmic framework in Section~\ref{sec: SIGA}.

	\begin{lemma}\label{lem:y mu x}
		\begin{enumerate}[(i)]
			\item \label{lem: y mu x bounded}
			The sets $\F_\mu$ in~\eqref{eq:sF} are uniformly bounded for all $\mu \in (0,1]$.
			
			\item \label{lem:y mu x continuous in mu and x}
			For any $\left\{\mu_t\right\} \subset (0,1]$, $\left\{\x^t\right\} \subset \X$, and $\bar{\x} \in \X$,
			$$
			\lim\limits_{\mu_t \downarrow 0, \x^t \rightarrow \bar{\x}} y_{\mu_t}(\x^t) = y(\bar{\x}).
			$$ %
		\end{enumerate}		
	\end{lemma}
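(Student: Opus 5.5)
For part (i), I would compare $y_\mu(\x)$ with a fixed reference point using the uniform contraction modulus $L_s$ from Definition~\ref{definition:smoothing}(\ref{assump: Psi_mu contraction and Lip}). Fix any $\x\in\X$ and $\mu\in(0,1]$. By the triangle inequality,
\[
\|y_\mu(\x)-y(\x)\| \le \|\Psi_\mu(\x,y_\mu(\x))-\Psi_\mu(\x,y(\x))\| + \|\Psi_\mu(\x,y(\x))-\Psi(\x,y(\x))\|,
\]
and the first term is at most $L_s\|y_\mu(\x)-y(\x)\|$. This gives
\[
\|y_\mu(\x)-y(\x)\|\le \tfrac{1}{1-L_s}\|\Psi_\mu(\x,y(\x))-\Psi(\x,y(\x))\|.
\]
It then suffices to bound $\sup_{\x\in\X,\mu\in(0,1]}\|\Psi_\mu(\x,y(\x))-\Psi(\x,y(\x))\|$. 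The set $K:=\{(\x,y(\x)):\x\in\X\}$ is compact by Lemma~\ref{lem: y() Lipschitz} and the compactness of $\X$.

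To bound this supremum, I would argue by contradiction. Suppose there are $(\x^t,\mu_t)$ with $\|\Psi_{\mu_t}(\x^t,y(\x^t))\|\to\infty$. Pass to a subsequence with $\x^t\to\bar\x$ and $\mu_t\to\bar\mu\in[0,1]$.
\begin{itemize}
\item If $\bar\mu=0$, Definition~\ref{definition:smoothing}(\ref{assump: Psi_mu limit}) gives $\Psi_{\mu_t}(\x^t,y(\x^t))\to\Psi(\bar\x,y(\bar\x))$, which is finite, a contradiction.
\item If $\bar\mu>0$, I need joint continuity of $(\mu,\x,\y)\mapsto\Psi_\mu(\x,\y)$ in $\mu$ as well. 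The definition does not state this explicitly.
\end{itemize}
This second case is the main obstacle. I would try to handle it by assuming, as holds for the concrete box smoothing of Lemma~\ref{lem: smooth Psi}, that $\Psi_\mu$ depends continuously on $\mu>0$. Alternatively, I could try to reduce it to (ii) of the definition by noting that the relevant sequences in the algorithm have $\mu\downarrow0$. With the supremum bounded by some $M$, we get $\|y_\mu(\x)\|\le \max_{\X}\|y\|+M/(1-L_s)$ for all $\x\in\X$ and $\mu\in(0,1]$. Together with the boundedness of $\X$, this shows the sets $\F_\mu$ are uniformly bounded.

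For part (ii), I would use the same estimate with a moving point. Take $\x^t\to\bar\x$ and $\mu_t\downarrow0$, and write
\[
\|y_{\mu_t}(\x^t)-y(\bar\x)\|\le \|y_{\mu_t}(\x^t)-y(\x^t)\|+\|y(\x^t)-y(\bar\x)\|.
\]
The second term tends to zero by the Lipschitz continuity of $y(\cdot)$ from Lemma~\ref{lem: y() Lipschitz}. For the first term, the bound above gives
\[
\|y_{\mu_t}(\x^t)-y(\x^t)\|\le \tfrac{1}{1-L_s}\|\Psi_{\mu_t}(\x^t,y(\x^t))-\Psi(\x^t,y(\x^t))\|.
\]
Now $(\x^t,y(\x^t))\to(\bar\x,y(\bar\x))$ and $\mu_t\downarrow0$. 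So Definition~\ref{definition:smoothing}(\ref{assump: Psi_mu limit}) gives $\Psi_{\mu_t}(\x^t,y(\x^t))\to\Psi(\bar\x,y(\bar\x))$. Lemma~\ref{lem:proj Omega continuous} together with the contraction property gives $\Psi(\x^t,y(\x^t))\to\Psi(\bar\x,y(\bar\x))$. Their difference therefore tends to zero, which proves (ii).

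The delicate point in (ii) is to apply the limit in Definition~\ref{definition:smoothing}(\ref{assump: Psi_mu limit}) along the sequence $(\x^t,y(\x^t))$ rather than at a fixed point. This is exactly what the joint-limit formulation in that definition allows.
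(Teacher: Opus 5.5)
Your argument follows the paper's own proof. You use the same estimate \eqref{eq: y mu x norm} from the $L_s$-contraction. For (ii) you use the same triangle inequality through $y(\x^t)$, combined with the joint limit in Definition~\ref{definition:smoothing}(\ref{assump: Psi_mu limit}). Part (ii) is complete as you wrote it; note that simply $\Psi(\x^t,y(\x^t))=y(\x^t)$.

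The obstacle you flag in (i) is real, and the paper's proof passes over it. Compactness of $\X$ together with Definition~\ref{definition:smoothing}(\ref{assump: Psi_mu limit}) only bounds the right-hand side of \eqref{eq: y mu x norm} for $\mu\in(0,\bar\mu]$ with some $\bar\mu>0$. Nothing in Definition~\ref{definition:smoothing} controls $\mu$ away from $0$.

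In fact (i) cannot be proved from Definition~\ref{definition:smoothing} alone. Start from the box smoothing of Lemma~\ref{lem: smooth Psi}. Its values are averages of medians lying in $[\widetilde{l}(\x),\widetilde{u}(\x)]$, so its fixed points lie in $\Omega(\x)$ and are uniformly bounded. Now replace $\Psi_\mu$ by $\Psi_\mu+g(\mu)\mathbf{w}$, with $\mathbf{w}\neq\mathbf{0}$ fixed, $g\equiv 0$ on $(0,\tfrac14]$, and $g$ unbounded on $(\tfrac14,1]$. Items (i)--(iii) of the definition still hold. However, each new fixed point lies at distance at least $|g(\mu)|\,\|\mathbf{w}\|/(1+L_s)$ from the old one, so the new sets $\F_\mu$ are not uniformly bounded.

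So an extra hypothesis is unavoidable, and either of the following suffices:
\begin{enumerate}[(a)]
\item Your joint continuity of $(\mu,\x,\y)\mapsto\Psi_\mu(\x,\y)$; it holds for the box smoothing by dominated convergence.
\item Assumption~\ref{assumption: Psi mu}(\ref{lem: Psi - Psi' mu}). Via Lemma~\ref{lem: y mu x}(\ref{lem: y_mu - y_mu'}) it gives $\|y_\mu(\x)-y_1(\x)\|\le \overline{C}/(1-L_s)$, and $y_1$ is continuous on the compact set $\X$.
\end{enumerate}

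Your fallback of using only $\mu\downarrow 0$ yields boundedness on $(0,\bar\mu]$ only. That is all Proposition~\ref{prop: stationarity smoothing} needs. In Section~\ref{sec: SIGA}, uniformity over all of $(0,1]$ is used, e.g.\ for $\overline{M}$ in Lemma~\ref{lem: y v bounded}. There Assumption~\ref{assumption: Psi mu} is in force, so the paper's later results are unaffected.
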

	\begin{proof}
		For any $\x \in \X$ and $\mu \in (0,1]$,
		from Definition~\ref{definition:smoothing}(\ref{assump: Psi_mu contraction and Lip}),
		\[
		\begin{aligned}
			\|y_{\mu}(\x) - y(\x)\| & = \left\| \Psi_{\mu}(\x, y_{\mu}(\x) ) - \Psi(\x, y(\x)) \right\| \\
			& \le L_s \left\|y_{\mu}(\x) - y(\x) \right\|
			+ \| \Psi_{\mu}(\x, y(\x)) - \Psi(\x, y(\x)) \|,
		\end{aligned}
		\]
		which implies that
		\begin{equation}\label{eq: y mu x norm}
			\|y_{\mu}(\x) - y(\x)\|
			\le 1/(1 - L_s) \| \Psi_{\mu}(\x, y(\x)) - \Psi(\x, y(\x)) \|.
		\end{equation}
		From Lemma~\ref{lem: y() Lipschitz}, $y(\x)$ is bounded for any $\x \in \X$.
		Then from the compactness of~$\X$ and the continuous convergence condition in Definition~\ref{definition:smoothing}(\ref{assump: Psi_mu limit}),
		the right-hand side of~\eqref{eq: y mu x norm} is uniformly bounded over $\x \in \X$ and $\mu \in (0,1]$.
		Therefore, $\{y_{\mu}(\x)\}$ is uniformly bounded over $\x \in \X$ and $\mu \in (0,1]$,
		which derives the desired result. 

		Next, considering $\mu_t \downarrow 0$ and $\{\x^t\} \subset \X$,
		from~\eqref{eq: y mu x norm} we have
		\[
			\|y_{\mu_t}(\x^t) - y(\x^t)\|
			\le 1/(1 - L_s) \| \Psi_{\mu_t}(\x^t, y(\x^t)) - \Psi(\x^t, y(\x^t)) \| \rightarrow 0,
		\]
		by Definition~\ref{definition:smoothing}(\ref{assump: Psi_mu limit}).
		Then
		\[
			\|y_{\mu_t}(\x^t) - y(\bar{\x})\| \le \|y_{\mu_t}(\x^t) - y(\x^t)\| + \|y(\x^t) - y(\bar{\x})\| \rightarrow 0
		\]
		as $\x^t \rightarrow \bar{\x}$ for any $\bar{\x} \in \X$.
	\end{proof} %

	\begin{proposition}[Solution consistency]\label{lem:global solution relationship}
		Let $\X^* \times \Y^*$ and $\X^*_\mu \times \Y^*_\mu$ be the solution sets of~\eqref{eq:1equation constraint} and~\eqref{eq:1equation constraint smoothing}, respectively. Then we have
		\[
		\left\{ \lim\limits_{(\x^\mu,\y^{\mu}) \in \X^*_\mu \times \Y^*_\mu, \ \mu \downarrow 0} (\x^\mu,\y^{\mu})  \right\}\subset \X^* \times \Y^*.
		\]
	\end{proposition}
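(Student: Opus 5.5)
We use the reduced formulations \eqref{hfun} and \eqref{shfun}: \eqref{eq:1equation constraint} is $\min_{\x\in\X} h(\x)$ with $h(\x)=f(\x,y(\x))$, and \eqref{eq:1equation constraint smoothing} is $\min_{\x\in\X} h_\mu(\x)$ with $h_\mu(\x)=f(\x,y_\mu(\x))$. The solution sets are then $\{(\x,y(\x)):\x\in\X^*\}$ and $\{(\x,y_\mu(\x)):\x\in\X^*_\mu\}$.

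Take a sequence $\mu_t\downarrow 0$ and solutions $(\x^{\mu_t},\y^{\mu_t})\in\X^*_{\mu_t}\times\Y^*_{\mu_t}$ with $(\x^{\mu_t},\y^{\mu_t})\to(\bar\x,\bar\y)$. Such limit points exist because of the uniform boundedness in Lemma~\ref{lem:y mu x}(\ref{lem: y mu x bounded}) and the compactness of $\X$.

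\textbf{Feasibility of the limit.} Since $\X$ is closed, $\bar\x\in\X$. We have $\y^{\mu_t}=y_{\mu_t}(\x^{\mu_t})$, so Lemma~\ref{lem:y mu x}(\ref{lem:y mu x continuous in mu and x}) gives $\bar\y=y(\bar\x)$. Hence $(\bar\x,\bar\y)\in\F$.

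\textbf{Optimality of the limit.} Fix any $\x^*\in\X^*$ (nonempty by Theorem~\ref{prop:existence of solution}). Optimality of $\x^{\mu_t}$ for \eqref{shfun} gives
\[
f(\x^{\mu_t},y_{\mu_t}(\x^{\mu_t}))\le f(\x^*,y_{\mu_t}(\x^*)).
\]
Pass to the limit $t\to\infty$.
\begin{itemize}
\item On the left, $f$ is continuous and $y_{\mu_t}(\x^{\mu_t})\to y(\bar\x)$, so the left side tends to $h(\bar\x)$.
\item On the right, apply Lemma~\ref{lem:y mu x}(\ref{lem:y mu x continuous in mu and x}) with the constant sequence $\x^t\equiv\x^*$ to get $y_{\mu_t}(\x^*)\to y(\x^*)$, so the right side tends to $h(\x^*)$.
\end{itemize}
Therefore $h(\bar\x)\le h(\x^*)=\min_{\X}h$. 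So $\bar\x\in\X^*$ and $(\bar\x,\bar\y)=(\bar\x,y(\bar\x))\in\X^*\times\Y^*$.

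The only delicate step is the joint limit in $(\mu,\x)$. This is exactly what Lemma~\ref{lem:y mu x}(\ref{lem:y mu x continuous in mu and x}) supplies, through the continuous-convergence condition Definition~\ref{definition:smoothing}(\ref{assump: Psi_mu limit}) and the uniform contraction constant $L_s$. Beyond that, the argument is standard epi-convergence-type reasoning and needs no uniform convergence of $h_\mu$.
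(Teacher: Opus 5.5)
Your proof is correct and follows the paper's argument essentially step for step. You compare $h_{\mu_t}(\x^{\mu_t})\le h_{\mu_t}(\x^*)$, pass to the limit on both sides via Lemma~\ref{lem:y mu x}(\ref{lem:y mu x continuous in mu and x}) (using a constant sequence on the right) to get $\bar\x\in\X^*$, and identify the $\y$-limit as $y(\bar\x)$ through the same lemma; you also correctly write $\y^{\mu_t}=y_{\mu_t}(\x^{\mu_t})$, where the paper's text has the slip $y(\x^\mu)$.
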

	\begin{proof}
		Choose $\x^{\mu_t} \in \X_{\mu_t}^* \subset \X$ for each~$t$.
		Without loss of generality, we assume $\x^{\mu_t} \rightarrow \bar{\x}$.
		Then $\bar{\x} \in \X$ by the closedness of~$\X$.
		Choose $\x^* \in \X^*\subset \X$.
		From $\x^{\mu_t} \in \X_{\mu_t}^*$, it holds that
		$
		h_{\mu_t}(\x^{\mu_t}) \le h_{\mu_t}(\x^*), %
		$
		and then taking limit $t \rightarrow +\infty$
		derives
		\[
		h(\bar{\x}) = \lim\limits_{\mu_t \downarrow 0, \x^{\mu_t} \rightarrow \bar{\x}} h_{\mu_t}(\x^{\mu_t}) \le  \lim\limits_{\mu_t \downarrow 0} h_{\mu_t}(\x^*) = h(\x^*),
		\]
		where the equalities hold based on the continuity of~$f$ and Lemma~\ref{lem:y mu x}(\ref{lem:y mu x continuous in mu and x}).
		Along with $\bar{\x} \in \X$ and $\x^* \in \X^*$, we have $h(\bar{\x}) = h(\x^*)$,
		and thus any accumulation point of $\{\x^{\mu_t}\}$ satisfies $\bar{\x} \in \X^*$.
		Note that  $(\x^\mu,\y^{\mu}) \in \X^*_\mu \times \Y^*_\mu$ means
$\y^{\mu}=y(\x^\mu)$.  Taking limit $t \rightarrow +\infty$ to $y_{\mu_t}(\x^{\mu_t})$ derives $y(\bar{\x})$ from Lemma~\ref{lem:y mu x}(\ref{lem:y mu x continuous in mu and x}) again,
		which means any accumulation point  of $\left\{\left(\x^{\mu_t}, \y^{\mu_t} \right) \right\}$ is in
		$\X^* \times \Y^*$.
	\end{proof}

	Next we define a stationary point of~\eqref{eq:1equation constraint smoothing} and consider its relationship with a stationary point of~\eqref{eq:1equation constraint}.
	\begin{definition}[First-order stationary point of~\eqref{eq:1equation constraint smoothing}]\label{def: sta 0}
		We call $(\x,\y)$  a stationary point of~\eqref{eq:1equation constraint smoothing}
		if $(\x,\y) \in \F_\mu$ and there exists a multiplier $\lambda \in \R^n$ such that
		\begin{equation}\label{smoothSt}
			\left\{\begin{aligned}
				& \boldsymbol{\mathrm{0}} \in \nabla_{\x} f(\x,\y) - \nabla_{\x} \Psi_{\mu}(\x,\y)^\top \lambda + \N_{\X}(\x), \\
				& \boldsymbol{\mathrm{0}} = \nabla_{\y} f(\x,\y) - \nabla_{\y} H_{\mu}(\x,\y)^\top \lambda.
			\end{aligned}				
			\right.
		\end{equation}
	\end{definition}

	\begin{assumption}\label{assumption: jacobian consistency + Lipschitz in mu}
			Suppose for any sequence $\{(\x^t,\y^t)\}\subset \X \times \R^n$
			and $(\x,\y) \in \X \times \R^n$,
			it holds that
			$$
			\lim\limits_{\x^t \rightarrow \x, \y^t \rightarrow \y, \mu_t \downarrow 0} {\rm dist} \left( \nabla \Psi_{\mu_t} (\x^t,\y^t)^\top, \partial_C \Psi (\x,\y) \right) = 0.
			$$
	\end{assumption}
In Lemma~\ref{lem: smooth Psi 2} of Appendix~\ref{sec: smooth box}, we show that Assumption \ref{assumption: jacobian consistency + Lipschitz in mu} holds for some specific smoothing function $\Psi_{\mu} $ with  $\Omega(\x) = \{ \y \in \R^n : l(\x) \le \y \le u(\x)\},$
where
$l,u: \X \rightarrow \R^n$ are continuously differentiable.

	\begin{proposition}[Stationarity consistency] \label{prop: stationarity smoothing}
				Let  $(\x^t,\y^t)$ be a stationary point of~\eqref{eq:1equation constraint smoothing} with $\mu=\mu_t \in (0,1]$. Under Assumption~\ref{assumption: jacobian consistency + Lipschitz in mu},
		any accumulation point  $(\x^*,\y^*)$ of $\{(\x^t,\y^t)\}$ with $\mu_t \downarrow 0$ is a stationary point of~\eqref{eq:1equation constraint}.
	\end{proposition}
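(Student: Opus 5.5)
We pass to a subsequence along which $(\x^t,\y^t)\to(\x^*,\y^*)$ and take limits in the stationarity system \eqref{smoothSt}. The argument has four steps.

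\textbf{Step 1: feasibility of the limit.} Since $\y^t=y_{\mu_t}(\x^t)$, Lemma~\ref{lem:y mu x}(\ref{lem:y mu x continuous in mu and x}) gives $\y^*=y(\x^*)$. Closedness of $\X$ gives $\x^*\in\X$. Hence $(\x^*,\y^*)\in\F$.

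\textbf{Step 2: boundedness of the multipliers $\lambda^t$.} This is the main obstacle. From the second equation in \eqref{smoothSt},
\[
\lambda^t=\left(I-\nabla_{\y}\Psi_{\mu_t}(\x^t,\y^t)\right)^{-\top}\nabla_{\y} f(\x^t,\y^t).
\]
By Definition~\ref{definition:smoothing}(\ref{assump: Psi_mu contraction and Lip}), $\|\nabla_{\y}\Psi_{\mu_t}\|\le L_s$. A Neumann-series argument then gives $\|(I-\nabla_{\y}\Psi_{\mu_t})^{-1}\|\le 1/(1-L_s)$. Since $f$ is $C^1$ and $\{(\x^t,\y^t)\}$ is bounded (Lemma~\ref{lem:y mu x}(\ref{lem: y mu x bounded})), the sequence $\nabla_{\y} f(\x^t,\y^t)$ is bounded. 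Therefore $\|\lambda^t\|$ is uniformly bounded, and we pass to a further subsequence with $\lambda^t\to\lambda^*$.

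\textbf{Step 3: convergence of the Jacobians.} Assumption~\ref{assumption: jacobian consistency + Lipschitz in mu} yields matrices $\Gamma^t\in\partial_C\Psi(\x^*,\y^*)$ with $\|\nabla\Psi_{\mu_t}(\x^t,\y^t)^\top-\Gamma^t\|\to0$. The set $\partial_C\Psi(\x^*,\y^*)$ is a product of Clarke generalized gradients of the locally Lipschitz components $\Psi_i$, so it is compact. Passing to a subsequence, $\Gamma^t\to\Gamma\in\partial_C\Psi(\x^*,\y^*)$, and hence $\nabla\Psi_{\mu_t}(\x^t,\y^t)^\top\to\Gamma$. We split $\Gamma$ into its $\x$-block (rows indexed by $\x$) and $\y$-block. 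Then $\nabla_{\x}\Psi_{\mu_t}^\top\lambda^t$ converges to the $\x$-part of $\Gamma\lambda^*$, and $\nabla_{\y}\Psi_{\mu_t}^\top\lambda^t$ converges to the $\y$-part of $\Gamma\lambda^*$.

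\textbf{Step 4: passing to the limit in \eqref{smoothSt}.}
\begin{itemize}
\item Second equation: using $\nabla_{\y}H_{\mu}^\top=I-\nabla_{\y}\Psi_{\mu}^\top$, it becomes in the limit $\boldsymbol{\mathrm{0}}=\nabla_{\y}f(\x^*,\y^*)+\lambda^*-(\Gamma\lambda^*)_{\y}$. This is exactly the $\y$-block of \eqref{eq:stationary point}.
\item First equation: choose $\v^t\in\N_{\X}(\x^t)$ with $\v^t=-\nabla_{\x}f(\x^t,\y^t)+\nabla_{\x}\Psi_{\mu_t}(\x^t,\y^t)^\top\lambda^t$. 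The right-hand side converges, so $\v^t\to\v^*$. The normal cone of the closed convex set $\X$ has closed graph, so $\v^*\in\N_{\X}(\x^*)$. This gives the $\x$-block of \eqref{eq:stationary point} with the same $\lambda^*$ and $\Gamma$.
\end{itemize}

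The only delicate points are the uniform multiplier bound, supplied by the uniform contraction constant $L_s$, and the compactness of $\partial_C\Psi$ needed to extract a single limiting $\Gamma$. Everything else is continuity.
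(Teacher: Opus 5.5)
Your proposal is correct and follows the paper's proof essentially step for step: you bound the multipliers uniformly by $\|\lambda^t\|\le\|\nabla_{\y} f(\x^t,\y^t)\|/(1-L_s)$ using the contraction constant $L_s$ and the uniform boundedness of the sets $\F_{\mu}$, extract a convergent subsequence of $(\x^t,\y^t,\lambda^t)$, and pass to the limit via Assumption~\ref{assumption: jacobian consistency + Lipschitz in mu}, where your use of the compactness of $\partial_C\Psi(\x^*,\y^*)$ and the closed graph of $\N_{\X}$ only spells out what the paper compresses into ``taking limit'' (and taking $\lambda^t$ straight from the definition is more direct than the paper's appeal to metric regularity). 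One caution: the $\y$-row of \eqref{smoothSt} as printed has the opposite sign to \eqref{eq:KKT smoothing} and to its own $\x$-row; your Step~2 uses the printed sign while Step~4 silently uses the correct one, so work throughout with \eqref{eq:KKT smoothing}, where $\lambda^t=-\bigl(I-\nabla_{\y}\Psi_{\mu_t}(\x^t,\y^t)\bigr)^{-\top}\nabla_{\y} f(\x^t,\y^t)$, and since only $\|\lambda^t\|$ enters the bound, nothing else in your argument changes.
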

	\begin{proof}
		As a stationary point of~\eqref{eq:1equation constraint smoothing} with $\mu=\mu_t$,
$(\x^t,\y^t)$ satisfies
		$\x^t \in \X$, $\y^t = \Psi_{\mu_t}(\x^t,\y^t)$, and
the sequence $\{(\x^t,\y^t)\}\subset \F_{\mu_t}$ is bounded from Lemma~\ref{lem:y mu x}(\ref{lem: y mu x bounded}).
		From Lemma~\ref{lem: nonsingular},
		$\nabla_{\y} H_{\mu_t}(\x^t,\y^t)$ is nonsingular,
		and the MR holds for the smoothed constraint system similar to Proposition~\ref{prop: metric regularity}. Hence,  there exists $\lambda^t \in \R^n$ such that
		\begin{equation}\label{eq:KKT smoothing}
			\boldsymbol{\mathrm{0}} \in \nabla f(\x^t,\y^t) +\left(
			\begin{aligned}
				& \boldsymbol{\mathrm{0}} \\[-0.8ex]
				& \lambda^t \\
			\end{aligned}
			\right) - \nabla\Psi_{\mu_t}(\x^t,\y^t)^\top \lambda^t + \left(\begin{aligned}
				&\N_{\X}(\x^t) \\[-0.8ex]
				&\quad \{\boldsymbol{\mathrm{0}}\}
			\end{aligned}\right).
		\end{equation}

From Definition~\ref{definition:smoothing}(\ref{assump: Psi_mu contraction and Lip}), $\|\nabla_{\y} \Psi_{\mu_t}(\x^t,\y^t)\|\le L_s$. 	
		From~\eqref{eq:KKT smoothing},
		$$
	\| \lambda^t \|	= \| \left(I - \nabla_{\y} \Psi_{\mu_t}(\x^t,\y^t) \right)^{-\top} \nabla_{\y} f(\x^t,\y^t)\|
		\le 1/(1 - L_s)\|\nabla_{\y} f(\x^t,\y^t)\|.
		$$
		Then from the continuity of $\nabla_{\y} f$ and the boundedness of $\{(\x^t,\y^t)\}$,
		there exists $\beta>0$ independent of $(\x,\y,\lambda)$ such that
\begin{equation}\label{lambda}
\|\lambda^t\|\le \beta, \,\, \forall \mu_t \in (0,1].
		\end{equation}
		
		Therefore, without loss of generality, we assume $(\x^t,\y^t,\lambda^t) \rightarrow (\x^*,\y^*,\lambda^*)$.
		Then taking limit to~\eqref{eq:KKT smoothing} derives that $(\x^*,\y^*)$
		satisfies Definition~\ref{def: stationary point}. %
	\end{proof}

Now we consider $\epsilon$ stationary points of problem~\eqref{eq:1equation constraint}. Let $H(\x,\y)=\y - \Psi(\x,\y)$.
	\begin{definition}[An $\epsilon$ stationary point of~\eqref{eq:1equation constraint}]\label{epsilon-stationary point}
		We call $(\x^*,\y^*)$ an $\epsilon$ stationary point of~\eqref{eq:1equation constraint}
		if $\x^*\in \X$ and
		there exist  $\lambda \in \R^n$, $\Gamma_\x \in \R^{m\times n}, \Gamma_\y\in \R^{n\times n}$,
such that $\Gamma(\x,\y):=\left(\begin{array}{l}
\Gamma_\x\\[-0.6ex]
\Gamma_\y
\end{array}\right) \in \partial_C \Psi(\x^*,\y^*)$ and
				\begin{equation}\label{ep-stationary point}
				\begin{aligned}
				{\rm dist} \left(\boldsymbol{\mathrm{0}}, \nabla_\x f(\x^*,\y^*) -\Gamma_\x\lambda +\N_{\X}(\x^*)\right) & \le \epsilon,\\
		\|\nabla_\y f(\x^*,\y^*)+	(I - \Gamma_\y) \lambda\| & \le \epsilon,\\
				\|H(\x^*,\y^*)\| & \le \epsilon.
\end{aligned}
		\end{equation}		
	\end{definition}

	In the following theorem, we characterize the relationship between a stationary point of~\eqref{eq:1equation constraint smoothing} and an $\epsilon$-stationary point of~\eqref{eq:1equation constraint}.
	\begin{theorem}\label{thm: eps stationary}
Let $(\x_\mu,\y_\mu) \in \X \times \R^n$ be a stationary point of~\eqref{eq:1equation constraint smoothing}.
Under Assumption~\ref{assumption: jacobian consistency + Lipschitz in mu},
if
\begin{equation}\label{smmoothstationary}
\|H(\x_\mu, \y_\mu)\|\le \epsilon \,\, \, \, {\rm and} \,\, \,\,  {\rm dist} (\nabla\Psi_\mu(\x_\mu, \y_\mu)^\top,
		\partial_C \Psi(\x_\mu,\y_\mu))\le \epsilon/\beta,
\end{equation}
where $\beta$ is a positive constant in (\ref{lambda}), then
$(\x_\mu, \y_\mu)$ is an $\epsilon$-stationary point of~\eqref{eq:1equation constraint}.
	\end{theorem}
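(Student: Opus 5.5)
We start from the stationarity system \eqref{smoothSt} at $(\x_\mu,\y_\mu)$ and transfer it to \eqref{ep-stationary point} by swapping the smoothed Jacobian for a nearby element of $\partial_C\Psi$. Stationarity of $(\x_\mu,\y_\mu)$ for \eqref{eq:1equation constraint smoothing} gives $\x_\mu\in\X$ and a multiplier $\lambda$ with
\[
\boldsymbol{\mathrm{0}}\in \nabla_\x f(\x_\mu,\y_\mu)-\nabla_\x\Psi_\mu(\x_\mu,\y_\mu)^\top\lambda+\N_\X(\x_\mu),\qquad
\boldsymbol{\mathrm{0}}=\nabla_\y f(\x_\mu,\y_\mu)-(I-\nabla_\y\Psi_\mu(\x_\mu,\y_\mu))^\top\lambda .
\]
We then bound $\lambda$ exactly as in the proof of Proposition~\ref{prop: stationarity smoothing}. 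Since $\|\nabla_\y\Psi_\mu\|\le L_s<1$ by Definition~\ref{definition:smoothing}(\ref{assump: Psi_mu contraction and Lip}), the second equation gives $\|\lambda\|\le \|\nabla_\y f(\x_\mu,\y_\mu)\|/(1-L_s)$. Moreover $(\x_\mu,\y_\mu)\in\F_\mu$, and these sets are uniformly bounded by Lemma~\ref{lem:y mu x}(\ref{lem: y mu x bounded}). Hence $\|\lambda\|\le\beta$, with $\beta$ the constant in \eqref{lambda}.

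Next we pick the comparison matrix. By \eqref{smmoothstationary} and closedness of $\partial_C\Psi(\x_\mu,\y_\mu)$ (a compact convex product of Clarke gradients), there is $\Gamma=(\Gamma_\x;\Gamma_\y)\in\partial_C\Psi(\x_\mu,\y_\mu)$ with $\|\nabla\Psi_\mu(\x_\mu,\y_\mu)^\top-\Gamma\|\le\epsilon/\beta$. The blocks of $\Gamma$ are aligned with $\nabla_\x\Psi_\mu^\top$ and $\nabla_\y\Psi_\mu^\top$, so each block difference is also at most $\epsilon/\beta$ in spectral norm, being a submatrix of the full difference. Keep the same $\lambda$.

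Finally we verify the three inequalities. For the $\x$-part, take the normal vector $\mathbf{w}\in\N_\X(\x_\mu)$ that realizes the inclusion above. Then $\nabla_\x f-\Gamma_\x\lambda+\mathbf{w}=(\nabla_\x\Psi_\mu^\top-\Gamma_\x)\lambda$, and the distance is at most $(\epsilon/\beta)\beta=\epsilon$. For the $\y$-part, the residual is $(\Gamma_\y-\nabla_\y\Psi_\mu^\top)\lambda$, up to the sign and transpose conventions of Definition~\ref{epsilon-stationary point}, with norm at most $\epsilon$. The third inequality $\|H(\x_\mu,\y_\mu)\|\le\epsilon$ is assumed directly in \eqref{smmoothstationary}.

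The main obstacle is bookkeeping rather than analysis. We must make the transpose and sign conventions in Definition~\ref{epsilon-stationary point} (which writes $+(I-\Gamma_\y)\lambda$) consistent with \eqref{smoothSt}, possibly by replacing $\lambda$ with $-\lambda$, which does not affect $\|\lambda\|\le\beta$. We must also confirm that $\beta$ bounds this particular multiplier. That holds because $\beta$ comes from the uniform bound over all of $\bigcup_\mu\F_\mu$, not only over a sequence.
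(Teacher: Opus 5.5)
Your argument is the paper's own: take the multiplier of the smoothed stationarity system, bound it by $\beta$, choose $\Gamma\in\partial_C\Psi(\x_\mu,\y_\mu)$ attaining the distance in \eqref{smmoothstationary}, and bound each block residual by $\|\nabla\Psi_\mu^\top-\Gamma\|\,\|\lambda\|\le(\epsilon/\beta)\beta$. Your observation that $\beta$ must be read as a uniform bound over $\bigcup_{\mu\in(0,1]}\F_\mu$, not just along one sequence, is correct; the paper leaves it implicit.

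The one step that would fail is your proposed sign repair $\lambda\mapsto-\lambda$. Suppose you take the $\y$-row of \eqref{smoothSt} literally, $\nabla_\y f=(I-\nabla_\y\Psi_\mu^\top)\lambda$. The $\x$-block of \eqref{ep-stationary point} then has the small residual $(\nabla_\x\Psi_\mu^\top-\Gamma_\x)\lambda$ with multiplier $\lambda$. With the same multiplier, however, the $\y$-block is $\nabla_\y f+(I-\Gamma_\y)\lambda=2\nabla_\y f+(\nabla_\y\Psi_\mu^\top-\Gamma_\y)\lambda$, which is not small. Your residual $(\Gamma_\y-\nabla_\y\Psi_\mu^\top)\lambda$ is what you get with $-\lambda$ instead. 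So your two blocks silently use opposite multipliers, and a global flip cannot fix this because it flips the $\x$-block as well.

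The real resolution is that the $\y$-row of \eqref{smoothSt} has a sign slip. The first-order system generated by the Lagrangian $f+\lambda^\top(\y-\Psi_\mu(\x,\y))$ is \eqref{eq:KKT smoothing}. That system is consistent with \eqref{eq:v mu x}, where $\vv_\mu$ plays the role of $\lambda$, and its $\y$-row reads $\boldsymbol{\mathrm{0}}=\nabla_\y f+(I-\nabla_\y\Psi_\mu)^\top\lambda$. This is the form the paper's proof actually uses. With it, a single multiplier gives both residuals $(\nabla_\x\Psi_\mu^\top-\Gamma_\x)\lambda$ and $(\nabla_\y\Psi_\mu^\top-\Gamma_\y)\lambda$. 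Your bound $\|\lambda\|\le\|\nabla_\y f\|/(1-L_s)\le\beta$ is unaffected.
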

	\begin{proof}
Let $\Gamma(\x_\mu,\y_\mu)\in \partial_C \Psi(\x_\mu,\y_\mu)$ 		such that
$${\rm dist} (\nabla\Psi_\mu(\x_\mu, \y_\mu)^\top,
		\partial_C \Psi(\x_\mu,\y_\mu))=\|\nabla\Psi_\mu(\x_\mu, \y_\mu)-\Gamma(\x_\mu,\y_\mu)\|.$$
Let $\lambda_\mu$ be the corresponding multiplier of $(\x_\mu,\y_\mu)$.
We verify the first inequality and the second inequality in (\ref{ep-stationary point}) as follows:
\[
	\begin{aligned}
		{\rm dist} \left( \boldsymbol{\mathrm{0}}, \nabla_{\x} f(\x_\mu,\y_\mu) - \Gamma_{\x_\mu}\lambda_\mu + \N_{\X}(\x_\mu) \right)&=  \left\| \left( \nabla_\x \Psi_\mu(\x_\mu, \y_\mu)- \Gamma_{\x_\mu}\right)\lambda_\mu\right\|\\
		&\le \| \nabla_\x \Psi_\mu(\x_\mu, \y_\mu)- \Gamma_{\x_\mu}\|\beta \, \le \ \epsilon
	\end{aligned}
\]
and
\[
\begin{aligned}
	\|\nabla_\y f(\x_\mu,\y_\mu)+	(I - \Gamma_{\y_\mu} )\lambda_\mu\|
	& = \|(\nabla_\y \Psi_\mu(\x_\mu,\y_\mu)- \Gamma_{\y_\mu}) \lambda_\mu\|\\
	& \le \| \nabla_\y \Psi_\mu(\x_\mu, \y_\mu)- \Gamma_{\y_\mu}\|\beta \, \le \, \epsilon,
\end{aligned}
\]
which completes the proof.
\end{proof}

The second system of equations in (\ref{smoothSt}) is linear in $\lambda$. Given $(\x_\mu,\y_\mu)$, the matrix $\nabla_\y H_\mu(\x,\y)$ is nonsingular and
$\lambda= \nabla_\y H_\mu(\x,\y)^{-\top}\nabla_\y f(\x,\y)$.
Instead of the three conditions in~(\ref{ep-stationary point}), we can also consider an $\epsilon$ stationary point of~\eqref{eq:1equation constraint} by the following two conditions
		\begin{equation}
			\left\{\begin{aligned}
				& {\rm dist} \left( \boldsymbol{\mathrm{0}}, \nabla_{\x} f(\x,\y) + \Gamma_{\x}(I-\Gamma_{\y})^{-1}\nabla_{\y} f(\x,\y) + \N_{\X}(\x) \right) \le \epsilon, \\
				& \|H(\x,\y)\| \le \epsilon.
			\end{aligned}				
			\right.
		\end{equation}

\section{SIGA: algorithmic framework}\label{sec: SIGA}

This section introduces the Smoothing Implicit Gradient Algorithm (SIGA) and proves its convergence  to a stationary point of problem \eqref{eq:1equation constraint}.
The core of SIGA relies on the computation of implicit gradient of $h_\mu(\x)$ in~\eqref{eq: nabla h mu}.
Inspired from~\cite{liu2023averaged}, %
we denote $\vv_{\mu}(\x)$ as the solution to the linear system
\begin{equation}\label{eq:v mu x}
	\nabla_{\y} f(\x,\yy_{\mu}(\x))  +  \left(\nabla_{\y} H_{\mu}(\x,\yy_{\mu}(\x))\right)^\top \vv_{\mu}(\x) = \boldsymbol{\mathrm{0}},
\end{equation}
and then from~\eqref{eq: nabla h mu},
\begin{equation}  \label{eq:nabla h_mu(x) 2}  %
	\begin{split}
		\nabla h_{\mu}(\x)
		& = \nabla_{\x} f(\x,\yy_{\mu}(\x)) + \left(\nabla_{\x} H_{\mu}(\x,\yy_{\mu}(\x))\right)^\top \vv_{\mu}(\x).
	\end{split}
\end{equation}
Indeed, $\vv_{\mu}(\x)$ corresponds to the multiplier in~\eqref{eq:KKT smoothing}.
From Lemma~\ref{lem: nonsingular},
the matrix $\nabla_{\y} H_{\mu}(\x,\yy_{\mu}(\x))$ is invertible,
so the linear system has a unique solution $\vv_{\mu}(\x)$.

\begin{algorithm} %
\caption{Smoothing Implicit Gradient Algorithm (SIGA)} %
\label{alg}
\algsetup{indent=2em} %
\begin{algorithmic}[1]
	\STATE \textbf{Input:} $\x^0$,
	$\mu_0 \in (0,1], \zeta_0 > 0, \tau_0 > 0, p \in \left(0,\frac{1}{4}\right)$.
	
	\FOR {$t = 1,2,\cdots,$} %
	
	\STATE
	Set
	\begin{equation}\label{eq: alg parameter}
		\mu_t = \frac{\mu_0}{t^p},\,\,
		\zeta_t = \frac{\zeta_0}{t^{2p}},\,\,
		\tau_t = \frac{\tau_0}{t}.
	\end{equation}


\STATE Find $\y^t$ such that
\begin{equation}\label{eq:algorithm sub-step 1}
	\| \y^t - \Psi_{{\mu}_t}(\x^{t},\y^t) \| \le \tau_t.
\end{equation}

\STATE Set
\begin{equation}\label{eq:algorithm sub-step 2}
	\v^t = \arg\min_{\v \in \R^n} \left\| \nabla_{\y} f(\x^t,\y^t)  +  \left(
	\nabla_{\y} H_{\mu_t}(\x^t,\y^t)
	\right)^\top \v  \right\|^2.
\end{equation}

\STATE
Compute
\vspace{-1em}
\begin{equation}\label{eq:x iteration}
	\begin{aligned}
		d_{\x}^t &= \nabla_{\x} f(\x^t,\y^t) + \left(
		\nabla_{\x} H_{\mu_t}(\x^t,\y^t)
		\right)^\top \v^t, \\
		\x^{t+1} &= \Proj_{\X} \left( \x^t - \zeta_t d_{\x}^t \right).
	\end{aligned}
\end{equation}
\vspace{-1em}

\ENDFOR
\end{algorithmic}
\end{algorithm}

At each step of Algorithm~\ref{alg}, with fixed $(\mu_t, \zeta_t, \tau_t)$, we first compute the approximate vectors $\y^t$ and $\v^t$ of $\yy_{\mu_t}(\x^t)$ and $\vv_{\mu_t}(\x^t)$, respectively,  to obtain approximate gradient $d_{\x}^t$ of
$\nabla h_{\mu_t}(\x^t)$ expressed in \eqref{eq:nabla h_mu(x) 2}.
Next we preform the core step: compute $\x^t$ by the projected gradient descent method for  problem (\ref{shfun}) with $\mu=\mu_t.$

It is worth noting that $(\mu_t, \zeta_t, \tau_t)\to (0,0,0)$  with rate
$\zeta_t/\mu_t\to 0$, $\tau_t/\zeta_t\to 0$, as $t \to \infty$. Moreover,
SIGA merges the decrease of smoothing parameter into the iteration of $\x^t$
by providing an adaptive and explicit update criterion for $\mu_t \downarrow 0$.
This integration ensures that $\mu_t$ decreases in a controlled manner, allowing the algorithm to transition smoothly from the smoothed problem~\eqref{eq:1equation constraint smoothing} to the original problem~\eqref{eq:1equation constraint}. The explicit update criterion eliminates the need of a separate parameter tuning for~$\mu_t$.

Now we prove the convergence of SIGA  to a stationary point of~\eqref{eq:1equation constraint} under the following assumption.

\begin{assumption}%
	\label{assumption: Psi mu} %
There exist constants $\widetilde{C}, \overline{C}, \widehat{C}$ and~$L_f$, %
such that
for any $\x, \x' \in \X$, $\y, \y' \in \R^n$, %
and $\mu, \mu' \in (0,1]$,
\begin{enumerate}[(i)]
\item \label{lem: Psi - Psi' x}
$\left\|\Psi_{\mu}(\x,\y) - \Psi_{\mu}(\x',\y)\right\|
\le \widetilde{C} \| \x - \x' \|$;

\item \label{lem: Psi - Psi' mu}
$\left\|\Psi_{\mu}(\x,\y) - \Psi_{\mu'}(\x,\y) \right\|
\le \overline{C} | \mu - \mu' |$; %

\item \label{lem: norm of partial Psi * v 0}
$
\left\| \nabla \Psi_{\mu}(\x,\y) - \nabla \Psi_{\mu}(\x',\y') \right\|
\le \frac{\widehat{C}}{\mu} (\| \x - \x'\| + \|\y - \y'\|);
$

\item \label{assumption Lf Lipschitz}
$
\left\| \nabla f(\x,\y) - \nabla f(\x',\y') \right\|
\le L_f (\| \x - \x'\| + \|\y - \y'\|).
$
\end{enumerate}
\end{assumption}

In Lemma~\ref{lem: Verification of Assumption Psi} of Appendix~\ref{sec: smooth box}, we verify that Assumption \ref{assumption: Psi mu} on $\Psi_\mu$ holds for a smoothing function $\Psi_{\mu} $ with  $\Omega(\x) = \{ \y \in \R^n : l(\x) \le \y \le u(\x)\},$
where
$l,u: \X \rightarrow \R^n$ are continuously differentiable. Moreover, we give properties of smooth functions $y_\mu, v_\mu, h_\mu$ in Appendix~\ref{sec:appendix properties lemmas} and~\ref{sec:appendix descent formula lemmas} for the convergence analysis of SIGA in this~section.

\begin{proposition}[Construction of the descent property] \label{prop:descent of Lyapunov function} %
Let $\{(\x^t,\y^t,\v^t)\}$ be a sequence generated by Algorithm~\ref{alg}.
Under Assumption~\ref{assumption: Psi mu},
\begin{equation}\label{eq: descent h mu x}
	\begin{aligned}
		h_{\mu_{t+1}}(\x^{t+1})  - &  h_{\mu_t}(\x^t) \le
		- \theta_t \|\x^{t+1} - \x^t\|^2 + \RR_t,
	\end{aligned}
\end{equation}
	where
	\[
	\left\{
	\begin{aligned}
		\theta_{t} & =\frac{1}{2} \left(\frac{1}{\zeta_t} - \frac{C_{h}}{\mu_t} \right), \\
		\RR_t & = \tfrac{\left((1 - L_s)^2 + 4 \widetilde{C}^2\right)\left( L_f^2 + 2 \widehat{C}^2 \overline{M}^2 \right)}{(1 - L_s)^4}  \frac{\zeta_t \tau_t^2}{\mu_t^2}
		+ \overline{C} \, \overline{M}   |\mu_t -\mu_{t+1}| + \tfrac{\overline{C}^2 L_f}{2 (1 - L_s)^2 }|\mu_t -\mu_{t+1}|^2, \\
	\end{aligned}\right.
	\]
	with $C_{h}$ defined in Lemma~\ref{lem: nabla h(x) - nabla h(x')}
	and $\overline{M}$ in Lemma~\ref{lem: y v bounded}.

Furthermore,
there exists $\widetilde{T} > 0$ such that for any $t \ge \widetilde{T}$,
$\theta_{t} \ge \theta_{\widetilde{T}} > 0$,
and
there exists $M_{\RR}>0$ such that
$ %
\sum_{t=\widetilde{T}}^{\infty} \RR_t \le M_{\RR}$. %
\end{proposition}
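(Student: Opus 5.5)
The plan is to split the left-hand side of \eqref{eq: descent h mu x} as
\[
h_{\mu_{t+1}}(\x^{t+1}) - h_{\mu_t}(\x^t) = \big[h_{\mu_{t+1}}(\x^{t+1}) - h_{\mu_t}(\x^{t+1})\big] + \big[h_{\mu_t}(\x^{t+1}) - h_{\mu_t}(\x^t)\big]
\]
and bound each bracket separately.

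\emph{First bracket (change in $\mu$).} Write $h_\mu(\x)=f(\x,y_\mu(\x))$. A fixed-point perturbation argument, as in the proof of Lemma~\ref{lem:y mu x}, together with Assumption~\ref{assumption: Psi mu}(\ref{lem: Psi - Psi' mu}) gives
\[
\|y_{\mu}(\x)-y_{\mu'}(\x)\|\le \tfrac{\overline{C}}{1-L_s}|\mu-\mu'|.
\]
Next, expand $f$ by the descent lemma using $L_f$ from Assumption~\ref{assumption: Psi mu}(\ref{assumption Lf Lipschitz}), and bound $\|\nabla_\y f\|$ along the uniformly bounded sets $\F_\mu$ (Lemma~\ref{lem:y mu x}(\ref{lem: y mu x bounded})) by the constant $\overline{M}$ of Lemma~\ref{lem: y v bounded}. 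To get exactly the linear term $\overline{C}\,\overline{M}|\mu_t-\mu_{t+1}|$, I will use $\nabla_\y f(\x,y)^\top(y_{\mu'}-y_\mu) = -\v^\top \nabla_\y H_\mu^\top(y_{\mu'}-y_\mu)$ and the identity $\nabla_\y H_\mu(y_{\mu'}-y_\mu)\approx \Psi_{\mu'}-\Psi_\mu$. Since the multiplier satisfies $\|\v_\mu\|\le\overline{M}$, this gives the first-order term, and the quadratic remainder gives $\tfrac{\overline{C}^2L_f}{2(1-L_s)^2}|\mu_t-\mu_{t+1}|^2$. If the linearization error is not cleanly absorbed, the fallback is to bound $\|\nabla_\y f\|\,\|y_{\mu_{t+1}}-y_{\mu_t}\|$ directly and redefine $\overline{M}$ to include the factor $1/(1-L_s)$.

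\emph{Second bracket (descent in $\x$).} By Lemma~\ref{lem: nabla h(x) - nabla h(x')}, $\nabla h_{\mu}$ is $C_h/\mu$-Lipschitz, so
\[
h_{\mu_t}(\x^{t+1})\le h_{\mu_t}(\x^t)+\nabla h_{\mu_t}(\x^t)^\top(\x^{t+1}-\x^t)+\tfrac{C_h}{2\mu_t}\|\x^{t+1}-\x^t\|^2 .
\]
The projection characterization of \eqref{eq:x iteration} gives $(d_\x^t)^\top(\x^{t+1}-\x^t)\le -\tfrac{1}{\zeta_t}\|\x^{t+1}-\x^t\|^2$. Write $\nabla h_{\mu_t}(\x^t)=d^t_\x+e^t$ and apply Young's inequality:
\[
(e^t)^\top(\x^{t+1}-\x^t)\le \tfrac{1}{2\zeta_t}\|\x^{t+1}-\x^t\|^2+\tfrac{\zeta_t}{2}\|e^t\|^2 .
\]
This yields the coefficient $-\theta_t$. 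The main obstacle is bounding the error $\|e^t\|$ in terms of $\tau_t$. First, the contraction property gives $\|\y^t-y_{\mu_t}(\x^t)\|\le \tau_t/(1-L_s)$. Then compare $\v^t=-\nabla_\y H_{\mu_t}(\x^t,\y^t)^{-\top}\nabla_\y f(\x^t,\y^t)$ with $\v_{\mu_t}(\x^t)$ using the resolvent identity $A^{-1}-B^{-1}=A^{-1}(B-A)B^{-1}$, Lemma~\ref{lem: nonsingular} (inverse norm at most $1/(1-L_s)$), Assumption~\ref{assumption: Psi mu}(\ref{lem: norm of partial Psi * v 0}) (a $\widehat C/\mu_t$-Lipschitz Jacobian) and $\|\nabla_\x\Psi_\mu\|\le\widetilde C$. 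Collecting terms gives
\[
\|e^t\|^2\le \tfrac{2((1-L_s)^2+4\widetilde{C}^2)(L_f^2+2\widehat{C}^2\overline{M}^2)}{(1-L_s)^4}\tfrac{\tau_t^2}{\mu_t^2}
\]
after using $\mu_t\le1$. Keeping the constants consistent here is the fiddly part.

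\emph{Final claims.} With \eqref{eq: alg parameter}, $\zeta_t/\mu_t=(\zeta_0/\mu_0)t^{-p}\to0$, so $\theta_t=\tfrac{1}{2\zeta_t}(1-C_h\zeta_t/\mu_t)$ is eventually positive and increasing, which gives $\widetilde T$. For the summability of $\RR_t$: the first term is of order $t^{-2p}t^{-2}t^{2p}=t^{-2}$, which is summable. The term $|\mu_t-\mu_{t+1}|$ telescopes because $\mu_t$ is monotone, so its sum is at most $\mu_0$. The squared differences are summable as well. Together these give $M_\RR$.
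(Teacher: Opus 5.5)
Your proposal is correct and follows the paper's proof essentially step for step: the same two-bracket split with the projection and Young's inequality argument (Lemma~\ref{lem:descent h}), the same comparison of $\v^t$ with $\vv_{\mu_t}(\x^t)$ via the perturbed linear system (Lemma~\ref{lem:descent v}), and the same parameter bookkeeping; your telescoping bound on $\sum_t|\mu_t-\mu_{t+1}|$ is a slightly simpler substitute for the paper's mean-value estimate. In the first bracket, drop the multiplier-linearization idea, because its remainder contributes an extra term $\tfrac{\widehat{C}\,\overline{M}\,\overline{C}^2}{2\mu_t(1-L_s)^2}|\mu_t-\mu_{t+1}|^2$ that is not in $\RR_t$. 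Use your direct fallback instead, and note that no redefinition of $\overline{M}$ is needed: Lemma~\ref{lem: y v bounded} already gives $\|\nabla_{\y} f(\x,\yy_\mu(\x))\|\le(1-L_s)\overline{M}$, which cancels the factor $1/(1-L_s)$ from Lemma~\ref{lem: y mu x}(\ref{lem: y_mu - y_mu'}) and yields exactly $\overline{C}\,\overline{M}|\mu_t-\mu_{t+1}|$.
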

\begin{proof}
Combining Lemma~\ref{lem:descent h} and Lemma~\ref{lem:descent v}, we obtain~\eqref{eq: descent h mu x}. Next, from~\eqref{eq: alg parameter},
\[
\begin{aligned}
\theta_{t}
= \frac{1}{2} \cdot t^{2p} \left(\frac{1}{\zeta_0} - \frac{C_{h}}{\mu_0 \cdot t^{p}} \right), \\
\end{aligned}
\]
which derives that there exists $\widetilde{T} > 0$ such that for any $t \ge \widetilde{T}$,
$\theta_{t} \ge \theta_{\widetilde{T}} > 0$.
As for $\RR_t$,
from $\mu_t = \mu_0/t^p$, we have
\[
0 < \mu_t - \mu_{t+1} = \mu_0 \left( \left( \frac{1}{t} \right)^p \! - \left( \frac{1}{t+1} \right)^p  \right)
\le \mu_0  \left( \frac{1}{t+1} \right)^{p-1} \!\! \left( \frac{1}{t} \! - \! \frac{1}{t+1} \right)
= \frac{\mu_0}{t \cdot (t+1)^p}. \\
\]
Since $p > 0$, we have
$
\sum_{t=\widetilde{T}}^{\infty} | \mu_t - \mu_{t+1} | < + \infty$ and $
\sum_{t=\widetilde{T}}^{\infty} | \mu_t - \mu_{t+1} |^2 < + \infty.
$
Also,
$$\sum_{t=\widetilde{T}}^{\infty} \frac{\zeta_t \tau_t^2}{\mu_t^2} = \frac{\zeta_0 \tau_0^2}{\mu_0^2} \sum_{t=\widetilde{T}}^{\infty}  \frac{1}{t^2} < + \infty.$$
Hence there exists $M_{\RR}>0$ such that
$ %
\sum_{t=\widetilde{T}}^{\infty} \RR_t \le M_{\RR}$.
\end{proof}

The above result indicates that~$\theta_{t}$ dominates the residual $\RR_t$ in magnitude as $t \rightarrow \infty$,
as the basis for our subsequence convergence.
\begin{theorem}\label{thm: min converge} %
Under the settings of Proposition~\ref{prop:descent of Lyapunov function},
there exists a subsequence $\{{t_j}\}$, %
such that
\[
\| \x^{t_j+1} - \x^{t_j} \|
\le O \left( t_j^{ - \frac{1}{2}} \right) \rightarrow 0,
\text{ as } t_j \rightarrow \infty.
\]
\end{theorem}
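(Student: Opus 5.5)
Telescope the descent inequality of Proposition~\ref{prop:descent of Lyapunov function} from $\widetilde{T}$ to a large index $T$, and combine it with a uniform lower bound on $h_\mu$ over $\X$ to bound a weighted sum of squared steps.

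First I establish the lower bound. By Lemma~\ref{lem:y mu x}(\ref{lem: y mu x bounded}), the points $(\x,y_\mu(\x))$ with $\x\in\X$ and $\mu\in(0,1]$ lie in a bounded set. Since $f$ is continuous, it follows that $h_\mu(\x)=f(\x,y_\mu(\x))\ge \underline{h}>-\infty$ uniformly in $\x\in\X$ and $\mu\in(0,1]$.

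Next I sum \eqref{eq: descent h mu x} over $t=\widetilde{T},\dots,T$, which gives
\[
\sum_{t=\widetilde{T}}^{T}\theta_t\|\x^{t+1}-\x^t\|^2\le h_{\mu_{\widetilde{T}}}(\x^{\widetilde{T}})-\underline{h}+M_{\RR}=:M,
\]
where the right-hand side is independent of $T$. Proposition~\ref{prop:descent of Lyapunov function} gives $\theta_t\ge\theta_{\widetilde T}>0$, but I need a sharper growth rate. From $\theta_t=\tfrac12 t^{2p}\bigl(1/\zeta_0-C_h/(\mu_0t^p)\bigr)$, for $t\ge \widetilde T$ (enlarging $\widetilde T$ if necessary) we have $\theta_t\ge c\,t^{2p}\ge c>0$ with $c=1/(4\zeta_0)$. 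Hence $\sum_{t\ge\widetilde T}\|\x^{t+1}-\x^t\|^2\le M/c<\infty$.

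The remaining step, which I expect to be the only delicate point, is turning summability into the stated $O(t_j^{-1/2})$ rate along a subsequence. I argue by contradiction. Suppose there were no such subsequence for a constant $K$, i.e. $\|\x^{t+1}-\x^t\|> K t^{-1/2}$ for all sufficiently large $t$. Then $\sum_t\|\x^{t+1}-\x^t\|^2\ge K^2\sum_t t^{-1}=\infty$, contradicting the summability above. Therefore, for any fixed $K>0$, infinitely many indices $t_j$ satisfy $\|\x^{t_j+1}-\x^{t_j}\|\le K t_j^{-1/2}$, and the right-hand side tends to $0$ as $t_j\to\infty$.

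Using the $t^{2p}$ weight in $\theta_t$ would even give the faster rate $O(t_j^{-(1+2p)/2})$ by the same argument, but only the weaker statement is claimed. The main care needed is that $\widetilde T$ is chosen so that both the positivity of $\theta_t$ and the tail bound $M_{\RR}$ hold simultaneously.
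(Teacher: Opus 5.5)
Your proof is correct and shares the paper's backbone. You telescope \eqref{eq: descent h mu x} from $\widetilde T$ on, bound $h_{\mu_t}(\x^t)$ below uniformly via Lemma~\ref{lem:y mu x}(\ref{lem: y mu x bounded}) and the continuity of $f$, and absorb the residuals into $M_{\RR}$. The difference is in how the subsequence is extracted.

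The paper replaces every $\theta_t$ by $\theta_{\widetilde T}$ and bounds the minimum over the window $[\widetilde T,T-1]$ by the average. This gives, for every $T$, the non-asymptotic best-iterate estimate $\min_{\widetilde T\le t\le T-1}\|\x^{t+1}-\x^t\|^2\le(\gamma_{\widetilde T}-h_0+M_{\RR})/(\theta_{\widetilde T}(T-\widetilde T))$. The paper then reads the subsequence off the minimizing indices. You instead pass to the full series $\sum_t\|\x^{t+1}-\x^t\|^2<\infty$ and compare it with the divergent harmonic series, which gives $\liminf_{t\to\infty}\sqrt{t}\,\|\x^{t+1}-\x^t\|=0$.

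The paper's route buys an explicit constant and a complexity-type statement: among the first $T$ iterations, some step is $O(T^{-1/2})$. Your route makes the existence of infinitely many good indices immediate. The paper's phrase ``which means there exists a subsequence'' tacitly needs the minimizing indices to be unbounded. That can fail only in the degenerate case where some step is exactly zero, and it is easily repaired, e.g.\ by using windows $[\lceil T/2\rceil,T-1]$. Your route also exploits $\theta_t\ge t^{2p}/(4\zeta_0)$ to give the sharper rate $O(t_j^{-(1+2p)/2})$, which the paper does not record.
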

\begin{proof}
Denote $\gamma_t :=  h_{\mu_t}(\x^t)$.
From Lemma~\ref{lem:y mu x}(\ref{lem: y mu x bounded}) and the continuity of $f$,
it derives that $\gamma_t$ has a lower bound, denoted as $h_0$.
For any $T >\widetilde{T}$,
from Proposition~\ref{prop:descent of Lyapunov function},
\[
\begin{aligned}
& (T - \widetilde{T}) \min\limits_{\widetilde{T} \le t \le T-1}
\theta_{t} \| \x^{t+1} - \x^t \|^2
\le \sum_{t=\widetilde{T}}^{T-1} %
\theta_{t} \| \x^{t+1} - \x^t \|^2
\\[-2ex]
\le & \gamma_{\widetilde{T}} - \gamma_T
+ \sum_{t=\widetilde{T}}^{T-1} \RR_t
\le \gamma_{\widetilde{T}} - h_0 + M_{\RR}.
\end{aligned}
\]
From Proposition~\ref{prop:descent of Lyapunov function} again,
we have $\theta_{\widetilde{T}} = \min\limits_{\widetilde{T} \le t \le T-1} \theta_{t}$,
and then %
\[
\begin{aligned}
& \ \theta_{\widetilde{T}} \min\limits_{\widetilde{T} \le t \le T-1}  \| \x^{t+1} - \x^t \|^2
\le & \min\limits_{\widetilde{T} \le t \le T-1} \theta_{t} \| \x^{t+1} - \x^t \|^2
\le \mfrac{\gamma_{\widetilde{T}} - h_0 + M_{\RR}}{T - \widetilde{T}}.
\end{aligned}			
\]
Thus, for any $T >\widetilde{T}$,  
\[
\min\limits_{\widetilde{T} \le t \le T-1}  \| \x^{t+1} - \x^t \|^2
\le  \mfrac{\gamma_{\widetilde{T}} - h_0 + M_{\RR}}{\theta_{\widetilde{T}} \cdot (T - \widetilde{T})}, 
\] %
which means that there exists a subsequence $\{{t_j}\}$ %
whose first term $t_1 \ge \widetilde{T}$, such that
\[
\| \x^{t_j+1} - \x^{t_j} \|^2
\le \mfrac{\gamma_{\widetilde{T}} - h_0 + M_{\RR}}{\theta_{\widetilde{T}} \cdot (t_j - \widetilde{T})}
=O(t_j^{-1}).
\]
Therefore, we obtain the desired result.
\end{proof}

Based on this theorem, finally we can prove the stationarity convergence of SIGA.
For the generated sequence, inspired from~\eqref{eq:KKT smoothing},
we define the stationarity residual as
\[
\widetilde{R}_t := {\rm dist} \left( \boldsymbol{\mathrm{0}},  \nabla f(\x^t,\y^t) +\left(
\begin{aligned}
& \boldsymbol{\mathrm{0}} \\[-1ex]
& \v^t \\
\end{aligned}
\right) - \nabla \Psi_{\mu_t}(\x^t,\y^t)^\top \v^t + \left(\begin{aligned}
&\N_{\X}(\x^t) \\[-0.8ex] %
&\quad \{\boldsymbol{\mathrm{0}}\}
\end{aligned}\right) \right).
\]
By the following analysis, any accumulation point $(\x^*,\y^*,\v^*)$ of the subsequence $\{(\x^{t_j},\y^{t_j},\v^{t_j})\}$ in Theorem~\ref{thm: min converge}
satisfies Definition~\ref{def: stationary point},
with $(\x^*,\y^*)$ being a stationary point of~\eqref{eq:1equation constraint}
and $\v^*$ corresponding to the multiplier $\lambda^*$.
\begin{theorem}[Convergence of feasibility violation and to the stationary point]\label{thm: }
Under Assumption~\ref{assumption: jacobian consistency + Lipschitz in mu} and the settings of Proposition~\ref{prop:descent of Lyapunov function},
it holds that
\begin{enumerate}[(i)]
\item \label{thm: feasibility violation convergence}
$\lim\limits_{t \rightarrow \infty} \| \y^t - \Psi(\x^t, \y^t) \| = 0$;
\item %
$\liminf\limits_{t \rightarrow \infty} \widetilde{R}_t = 0$.
\end{enumerate}

\end{theorem}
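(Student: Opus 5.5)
For part (i), I will split the residual with the triangle inequality:
\[
\|\y^t - \Psi(\x^t,\y^t)\| \le \|\y^t - \Psi_{\mu_t}(\x^t,\y^t)\| + \|\Psi_{\mu_t}(\x^t,\y^t) - \Psi(\x^t,\y^t)\|.
\]
The first term is at most $\tau_t = \tau_0/t \to 0$ by \eqref{eq:algorithm sub-step 1}.

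For the second term, I first need $\{\y^t\}$ to be bounded. From the contraction property in Definition~\ref{definition:smoothing}(\ref{assump: Psi_mu contraction and Lip}), we get $\|\y^t - y_{\mu_t}(\x^t)\| \le \tau_t/(1-L_s)$. Lemma~\ref{lem:y mu x}(\ref{lem: y mu x bounded}) then shows that $\{\y^t\}$ is bounded, and $\{\x^t\}\subset\X$ is compact.

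I then argue by contradiction. If the second term did not tend to zero, there would be a subsequence with $(\x^{t},\y^{t})\to(\bar\x,\bar\y)$ along which the term stays bounded away from zero. But Definition~\ref{definition:smoothing}(\ref{assump: Psi_mu limit}) (continuous convergence) and the continuity of $\Psi$ (Assumption~\ref{assump: foundation} together with Lemma~\ref{lem:proj Omega continuous}) give $\Psi_{\mu_t}(\x^t,\y^t)-\Psi(\x^t,\y^t)\to \Psi(\bar\x,\bar\y)-\Psi(\bar\x,\bar\y)=0$, a contradiction. Alternatively, Assumption~\ref{assumption: Psi mu}(\ref{lem: Psi - Psi' mu}) could give the rate $\overline{C}\mu_t$ if one lets $\mu'\downarrow0$, but the compactness argument avoids relying on that.

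For part (ii), I will use the subsequence $\{t_j\}$ from Theorem~\ref{thm: min converge}, along which $\|\x^{t_j+1}-\x^{t_j}\|=O(t_j^{-1/2})$. By \eqref{eq:algorithm sub-step 2} and Lemma~\ref{lem: nonsingular}, $\v^t$ solves the linear system exactly, so the $\y$-block of the residual vanishes:
\[
\nabla_\y f(\x^t,\y^t)+\v^t-\nabla_\y\Psi_{\mu_t}(\x^t,\y^t)^\top\v^t=\boldsymbol{\mathrm{0}}.
\]
Hence $\widetilde R_t={\rm dist}(\boldsymbol{\mathrm{0}}, d_\x^t+\N_\X(\x^t))$. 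Moreover, $\|\v^t\|\le \|\nabla_\y f(\x^t,\y^t)\|/(1-L_s)$ is bounded because $\{(\x^t,\y^t)\}$ is bounded.

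Next I use the optimality condition of the projection in \eqref{eq:x iteration}:
\[
\frac{\x^t-\x^{t+1}}{\zeta_t}-d_\x^t\in\N_\X(\x^{t+1}).
\]
This gives an exact membership at $\x^{t+1}$ rather than $\x^t$:
\[
{\rm dist}\bigl(\boldsymbol{\mathrm{0}}, d_\x^{t}+\N_\X(\x^{t+1})\bigr)\le \|\x^{t+1}-\x^t\|/\zeta_t .
\]
Along $t_j$ the right-hand side is $O(t_j^{-1/2}\cdot t_j^{2p})=O(t_j^{2p-1/2})\to0$, since $p<1/4$. This is exactly where the restriction $p\in(0,1/4)$ enters.

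The main obstacle is the mismatch between the normal cone at $\x^{t+1}$ and the one at $\x^t$ in the definition of $\widetilde R_t$, since the normal cone mapping is not continuous. To handle it, I will pass to a further subsequence with $(\x^{t_j},\y^{t_j},\v^{t_j})\to(\x^*,\y^*,\v^*)$; then $\x^{t_j+1}\to\x^*$ too. By Assumption~\ref{assumption: jacobian consistency + Lipschitz in mu}, $\nabla\Psi_{\mu_{t_j}}(\x^{t_j},\y^{t_j})^\top$ approaches some $\Gamma\in\partial_C\Psi(\x^*,\y^*)$, so $d_\x^{t_j}$ converges. The outer semicontinuity (closed graph) of $\N_\X$ for convex $\X$ then yields the limiting inclusion \eqref{eq:stationary point} at $(\x^*,\y^*)$ with $\lambda^*=\v^*$, and $(\x^*,\y^*)\in\F$ by part (i).

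To get the statement literally, $\liminf \widetilde R_t=0$, I will use the normal cone at $\x^{t_j+1}$ and transfer it to $\x^{t_j}$. Concretely, set $w_j=(\x^{t_j}-\x^{t_j+1})/\zeta_{t_j}-d_\x^{t_j}$. Since $\|\x^{t_j+1}-\x^{t_j}\|=O(t_j^{-1/2})$, I will argue that the index can be shifted to $t_j+1$. Assumption~\ref{assumption: Psi mu}(\ref{lem: norm of partial Psi * v 0}) and (\ref{assumption Lf Lipschitz}) bound $\|d_\x^{t_j+1}-d_\x^{t_j}\|$ by $O\bigl((\|\x^{t_j+1}-\x^{t_j}\|+\|\y^{t_j+1}-\y^{t_j}\|)/\mu_{t_j}\bigr)$ plus the $\mu$-change terms. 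Here I will use that $\y^t$ tracks $y_{\mu_t}(\x^t)$, which is Lipschitz in $\x$ and in $\mu$ by the appendix lemmas. This yields $\widetilde R_{t_j+1}\le O(t_j^{2p-1/2})+O(t_j^{p-1/2})\to0$.
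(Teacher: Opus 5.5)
Your part (i), and your part (ii) up to the limiting inclusion, follow the paper's proof. That covers the subsequence from Theorem~\ref{thm: min converge}, projection optimality at $\x^{t_j+1}$, boundedness of $\v^t$, and then Assumption~\ref{assumption: jacobian consistency + Lipschitz in mu} with outer semicontinuity of $\N_{\X}$. Two of your touches improve on the paper:
\begin{itemize}
\item the $\y$-block vanishes exactly because $\v^t$ solves a nonsingular linear system, whereas the paper estimates $\boldsymbol{\varepsilon}^{t_j}$;
\item your compactness argument makes explicit the boundedness of $\{\y^t\}$ that the paper uses tacitly in (i).
\end{itemize}
The paper stops at the limiting inclusion. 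Its residual \eqref{eq:theorem KKT epsilon} carries $\N_{\X}(\x^{t_j+1})$, so it never reaches the literal $\liminf_t\widetilde{R}_t=0$, which uses $\N_{\X}(\x^t)$. Your index shift is meant to close exactly that, and this is where your gap is.

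Write $d_{\x}^{t}=\nabla_{\x} f(\x^t,\y^t)-\nabla_{\x}\Psi_{\mu_t}(\x^t,\y^t)^\top\v^t$. The difference $d_{\x}^{t_j+1}-d_{\x}^{t_j}$ contains the term $\bigl(\nabla_{\x}\Psi_{\mu_{t_j+1}}(\x^{t_j},\y^{t_j})-\nabla_{\x}\Psi_{\mu_{t_j}}(\x^{t_j},\y^{t_j})\bigr)^\top\v^{t_j}$. Through the system \eqref{eq:algorithm sub-step 2}, the difference $\v^{t_j+1}-\v^{t_j}$ contains the analogous difference of $\nabla_{\y}\Psi_{\mu}$ at a fixed point. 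No stated assumption or lemma directly bounds how $\nabla\Psi_\mu$ varies with $\mu$:
\begin{itemize}
\item Assumption~\ref{assumption: Psi mu}(\ref{lem: Psi - Psi' mu}) bounds only $\Psi_\mu$ itself.
\item Assumption~\ref{assumption: Psi mu}(\ref{lem: norm of partial Psi * v 0}) and Lemma~\ref{lem: v mu x} hold for a fixed $\mu$.
\item Lemma~\ref{lem: y mu x}(\ref{lem: y_mu - y_mu'}) covers only $y_\mu$.
\item Assumption~\ref{assumption: jacobian consistency + Lipschitz in mu} only puts each smoothed Jacobian near the set $\partial_C\Psi(\x^*,\y^*)$, which is not a singleton at kinks.
\end{itemize}
So the fact that $\y^t$ tracks $y_{\mu_t}(\x^t)$ does not handle these terms. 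As written, your rate $O(t_j^{2p-1/2})+O(t_j^{p-1/2})$ is unsupported.

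The step can be repaired in either of two ways.
\begin{itemize}
\item Set $D:=\Psi_{\mu_t}-\Psi_{\mu_{t+1}}$. Then $\|D\|\le\overline{C}|\mu_t-\mu_{t+1}|$ and $\nabla D$ is $2\widehat{C}/\mu_{t+1}$-Lipschitz. A second-order Taylor estimate with step $h\sim\sqrt{|\mu_t-\mu_{t+1}|\,\mu_{t+1}}$ gives $\|\nabla D\|=O\bigl(\sqrt{|\mu_t-\mu_{t+1}|/\mu_{t+1}}\bigr)=O(t^{-1/2})$. This works directly in $\y$-directions; in $\x$-directions it works along segments inside $\X$, with some care at $\partial\X$.
\item Add the hypothesis $\|\nabla\Psi_\mu-\nabla\Psi_{\mu'}\|\le C|\mu-\mu'|/\min\{\mu,\mu'\}$. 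It holds for the integral smoothing of Appendix~\ref{sec: smooth box} whenever $\sup_s|s|\rho(s)<\infty$, e.g.\ for CHKS.
\end{itemize}
With either fix, $\widetilde{R}_{t_j+1}\to0$ and the literal statement follows.
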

\begin{proof}
(i)
It is easy to obtain that
\[
\begin{aligned}
& \| \y^{t} - \Psi(\x^{t}, \y^{t}) \|
\le \| \y^{t} - \Psi_{\mu_{t}}(\x^{t}, \y^{t}) \|
+ \| \Psi_{{\mu_{t}}}(\x^{t}, \y^{t})
- \Psi(\x^{t}, \y^{t})\|, \\
\end{aligned}
\]
converges to zero as $t \rightarrow \infty$ according to~\eqref{eq:algorithm sub-step 1} and Definition~\ref{definition:smoothing}(\ref{assump: Psi_mu limit}).

(ii)
From~\eqref{eq: alg parameter},  with $0< p <1/4$, we have
$$\frac{\tau_t}{\mu_t} = \frac{\tau_0}{\mu_0 \cdot t^{1-p}} \rightarrow 0,$$
as $t \rightarrow \infty$.
Then from Lemma~\ref{lem:descent v}, it holds that
$\| \y^{t} - \yy_{\mu_t}(\x^t) \| \rightarrow 0$ and $\| \v^{t} - \vv_{\mu_t}(\x^t) \| \rightarrow 0$.
Along with Lemma~\ref{lem:y mu x}(\ref{lem: y mu x bounded}) and Lemma~\ref{lem: y v bounded},
$\{\y^t\}$ and $\{\v^t\}$ are bounded.
Then for the subsequence  $\{{t_j}\}$ in Theorem~\ref{thm: min converge},
$\{(\x^{t_j},\y^{t_j},\v^{t_j})\}$ is bounded,
with an accumulation point denoted as $(\x^*,\y^*,\v^*)$, where $\x^* \in \X$.
According to~(\ref{thm: feasibility violation convergence}),
we have $\y^* = \Psi(\x^*,\y^*)$
and $(\x^*,\y^*) \in \F$.

From the iteration of $\x^t$ in~\eqref{eq:x iteration}, we have %
$$
\begin{aligned}
\x^{t+1} %
&= \arg\min_{\bar{\x} \in \X} \left\| \x^t - \zeta_t \Big(\nabla_{\x} f(\x^t,\y^t)
+ \left( \nabla_{\x} H_{\mu_t}(\x^t,\y^t) \right)\lowtop \v^t \Big)  - \bar{\x} \right\|^2,
\end{aligned}	
$$
which derives
\[
2 \left( \x^t - \zeta_t \Big(\nabla_{\x} f(\x^t,\y^t)
- \left( \nabla_{\x} \Psi_{\mu_t}(\x^t,\y^t) \right)\lowtop \v^t \Big) - \x^{t+1} \right) \in \N_{\X}(\x^{t+1}).
\]
Then for the subsequence $\{t_j\}$,
\begin{equation*} \label{eq:theorem KKT epsilon x}
\frac{\x^{t_j} - \x^{{t_j}+1}}{\zeta_{t_j}} \in \nabla_{\x} f(\x^{t_j},\y^{t_j}) - \left( \nabla_{\x} \Psi_{\mu_{t_j}}(\x^{t_j},\y^{t_j}) \right)\lowtop  \v^{t_j} + \N_{\X}(\x^{{t_j}+1}).
\end{equation*}
Denote
\begin{equation*}\label{eq:theorem KKT epsilon y}
\boldsymbol{\varepsilon}^{t_j} := %
\nabla_{\y} f(\x^{t_j},\y^{t_j})  + \v^{t_j} -  \left(	\nabla_{\y} \Psi_{\mu_{t_j}}(\x^{t_j},\y^{t_j})
\right)\lowtop  \v^{t_j},
\end{equation*}
and then
\begin{equation}\label{eq:theorem KKT epsilon}
\left(\begin{aligned}
& \tfrac{\x^{t_j} - \x^{{t_j}+1}}{\zeta_{t_j}} \\[-0.5ex]
& \quad \boldsymbol{\varepsilon}^{t_j}
\end{aligned}		
\right)
\in \nabla f(\x^{t_j},\y^{t_j}) +\left(
\begin{aligned}
& \boldsymbol{\mathrm{0}} \\[-1ex]
& \v^{t_j} \\
\end{aligned}
\right) - \nabla \Psi_{\mu_{t_j}}(\x^{t_j},\y^{t_j})^\top \v^{t_j} + \left(\begin{aligned}
&\N_{\X}(\x^{t_j+1}) \\[-0.8ex]
&\quad \{\boldsymbol{\mathrm{0}}\}
\end{aligned}\right) .
\end{equation}

From the definition of $\vv_{\mu}(\x)$ in~\eqref{eq:v mu x}, we have
\[
\boldsymbol{\mathrm{0}} = \nabla_{\y} f(\x^{t_j},\yy_{\mu_{t_j}}(\x^{t_j}))  + \vv_{\mu_{t_j}}(\x^{t_j})
- \left(\nabla_{\y} \Psi_{\mu_{t_j}}(\x^{t_j},\yy_{\mu_{t_j}}(\x^{t_j}))\right)\lowtop \vv_{\mu_{t_j}}(\x^{t_j}).
\]
Then from Assumption~\ref{assumption: Psi mu}(\ref{lem: norm of partial Psi * v 0})(\ref{assumption Lf Lipschitz}),
Lemma~\ref{lem: blanket Lip nabla Psi * v},
and Lemma~\ref{lem: y v bounded},
\[
\begin{aligned}
& \| \boldsymbol{\varepsilon}^{t_j} \|
\le L_f \| \yy_{\mu_{t_j}}(\x^{t_j}) - \y^{t_j} \|
+ \left\| \left(	\nabla_{\y} \Psi_{\mu_{t_j}}(\x^{t_j},\y^{t_j})
\right)\lowtop \left( \vv_{\mu_{t_j}}(\x^{t_j}) - \v^{t_j} \right) \right\| \\
& \quad + \left\| \left( \nabla_{\y} \Psi_{\mu_{t_j}}(\x^{t_j},\yy_{\mu_{t_j}}(\x^{t_j})) -	\nabla_{\y} \Psi_{\mu_{t_j}}(\x^{t_j},\y^{t_j})
\right)\lowtop \vv_{\mu_{t_j}}(\x^{t_j})  \right\| + \| \vv_{\mu_{t_j}}(\x^{t_j}) - \v^{t_j}  \|
\\
& \le \left( L_f + \frac{\widehat{C} \overline{M}}{\mu_{t_j}} \right) \| \yy_{\mu_{t_j}}(\x^{t_j}) - \y^{t_j} \|
+ (L_s + 1) \| \vv_{\mu_{t_j}}(\x^{t_j}) - \v^{t_j}  \|.
\\
\end{aligned}		%
\]
According to Theorem~\ref{thm: min converge} and Lemma~\ref{lem:descent v}, since $0 < p <\frac{1}{4}$,
\[
\frac{\|\x^{t_j} - \x^{{t_j}+1}\|}{\zeta_{t_j}} \le O \left(  t_j^{2p - \frac{1}{2}}
\right) \rightarrow 0,
\,\,
\frac{\| \y^{t_j} - \yy_{\mu_{t_j}}(\x^{t_j}) \|}{\mu_{t_j}} \le O \left(  t_j^{p -1}
\right) \rightarrow 0, \,  {\rm as} \, t_j \rightarrow \infty.
\]
Then $\| \boldsymbol{\varepsilon}^{t_j} \| \rightarrow  0$, and by Assumption~\ref{assumption: jacobian consistency + Lipschitz in mu} and the continuous differentiability of $f$, taking limit to~\eqref{eq:theorem KKT epsilon} derives
\[
\boldsymbol{\mathrm{0}} \in \nabla f(\x^*,\y^*) +\left(
\begin{aligned}
& \boldsymbol{\mathrm{0}} \\[-0.5em]
& \v^* \\
\end{aligned}
\right) - \partial_C \Psi (\x^*,\y^*) \ \v^* + \left(\begin{aligned}
& \N_{\X}(\x^*) \\[-0.2em]
& \quad \{\boldsymbol{\mathrm{0}}\}
\end{aligned}\right)
\]	
as the desired result.
\end{proof}

\section{Applications in portfolio management}\label{sec: numerical}

Now we apply SIGA to solve the portfolio management problem~\cite{chen2018smoothing}.
The experiments are conducted via Python 3.9
on a Lenovo Desktop with Intel Core i7-12700 CPU (2.10GHz) and 64.0GB RAM.

Inspired from the standard Markwitz mean-variance model, %
we first define the portfolio selection model as
\begin{equation}\label{eq: MV model lower level}
\begin{aligned}
\min\limits_{\y \in \Omega(\x)} \  & \frac{1}{2} \y^\top \Sigma \y - \eta \rr^\top \y + \frac{\nu}{2} \left(\ee^\top \y - 1 \right)^2,  \\
\end{aligned}	
\end{equation}
where $\y \in \R^n$ denotes the portfolio weights of $n$ stocks (risky assets),
$\rr \in \R^n$ and $\Sigma \in \R^{n \times n}$ are the mean vector and covariance matrix of returns on each asset,
and $\eta > 0$ is the risk tolerance factor.
Let $m = 2n+1$,
$\x = (\aa^\top \ \bb^\top \ \eta)^\top \in \R^{2n+1}$,
and $$\X = \left\{(\underline{\aa}, \underline{\bb}, 0 )
\le \x \le (\overline{\aa}, \overline{\bb}, 10^8):
\underline{\aa} = \boldsymbol{\mathrm{0}}, \
\overline{\aa} = \tfrac{1}{n+1} \ee,
\underline{\bb} = \tfrac{1}{n-1} \ee,
\overline{\bb} = \ee \right\},$$
where $\aa,\underline{\aa},\overline{\aa}$, $\bb,\underline{\bb},\overline{\bb} \in \R^n$.
Let $\Omega(\x) = \{ \y \in \R^n : l(\x) \le \y \le u(\x)\}$ be a moving box,
where $l(\x) = \left(I_{n} \ \boldsymbol{\mathrm{0}}_{n \times (n+1)}\right) \x=\aa, $ %
$u(\x) = \left(\boldsymbol{\mathrm{0}}_{n \times n} \ I_{n} \ \boldsymbol{\mathrm{0}}_{n \times 1}\right) \x=\bb$.
From $\overline{\aa} < \underline{\bb}$ and $\x \in \X$, we have $\aa < \bb$ always holds,
so for any $\x \in \X$, $l(\x) < u(\x)$ and $\Omega(\x)$ is nonempty.
In~\eqref{eq: MV model lower level}, equality constraint $\ee^\top \y = 1$ is penalized to the objective with parameter $\nu > 0$ so that partial investment and leverage are allowed.
Since $\eta = \x_m = \left(\boldsymbol{\mathrm{0}}_{1 \times 2n} \ 1 \right) \x$,
the model corresponds to the PVI %
with
$F(\x,\y)
=  \Sigma \y - \x_m \, \rr + \nu \left( \ee^\top \y - 1 \right) \ee.$

We consider an optimal parameter selection model based on the above PVI to find the best parameters $\x$ for portfolio selection.
Following~\cite{chen2018smoothing,steinbach2001markowitz},
we focus on the case where $\Sigma$ is positive definite for risky assets.  %
We use the Sharpe ratio
$
\text{SR} = %
\tfrac{\rr^\top \y}{\sqrt{\y^\top \Sigma \y}},
$
characterizing the ratio of return and risk,
as the measure of portfolio selection quality.
Hence, %
our model for a fund manager to optimize weight constraints
and risk-tolerance~$\eta$ to maximize SR,
can be defined as
\begin{equation}\label{eq:SR bilevel}
\begin{aligned}
\min\limits_{\x \in \X, \ \y} \  & - \tfrac{\rr^\top \y}{\sqrt{\y^\top \Sigma \y}} \\[-0.5ex]
\text{ s.t. }\ & %
\y = \Proj_{\Omega(\x)} \left(\y - \delta F(\x,\y) \right). \\
\end{aligned}	
\end{equation}

We use data sets from the standard OR-library over 291 weeks (Data 1-2),
China A-share market over 1940 days (Data 3-4),
and CSI300 index over 583 days (Data 5-6),
summarized in Table~\ref{table data sets}.
For the A-share and CSI300 data,
we delete the stocks with more than 5\% zero daily returns to improve numerical stability,
obtaining 457 and 131 liquid (actively traded) stocks as Data 3 and Data 5.
We further sort them by average return over the days and take the top 50 stocks as Data~4 and Data~6.

For each data set, denote the stock closing prices as $\left\{S_{j,i}: \ j \in [J], i \in [n] \right\}$. %
We calculate the returns on each stock as
$r_{j,i} = \log \frac{S_{j+1,i}}{S_{j,i}}$ for $j \in [J-1], \ i \in [n]$,
and obtain a return matrix~$\Xi \in \R^{(J-1) \times n}$
with elements $r_{j,i}$. %
We partition each data set into a training set and a testing set at the ratio of 9:1.
The training set, also called in-sample set, consists of the first~9/10 rows of $\Xi$ as $\Xi_{in}$,
used to compute the $\rr$ and~$\Sigma$ in~\eqref{eq:SR bilevel}
to solve the parameter selection model
with optimal parameter $\x^*$ and the corresponding optimal portfolio selection $\y^*$.
The testing set, also called out-of-sample set, contains the last 1/10 rows of~$\Xi$ as~$\Xi_{out}$,
used to test the performance of the obtained output.
Specifically, for the training set,
we substitute into~\eqref{eq:SR bilevel} $\rr_{in}$ and $\Sigma_{in}$, the mean of each column in $\Xi_{in}$
and the covariance matrix (centered by $\rr_{in}$) plus $ 10^{-4} I_n$, where the regularization is to guarantee the positive definiteness of $\Sigma_{in}$ and improve numerical stability.
For the testing set,
we use $\rr_{out}$ and $\Sigma_{out}$, the mean of each column in $\Xi_{out}$ and the covariance matrix centered by $\rr_{out}$,
to measure performance
in the sense of two metrics: Sharpe ratio (SR) and cumulative return (CR) over the entire testing set,
where CR is calculated by $\ee^\top \Xi_{out} \y^*$. %
Obviously, superior selection strategies yield higher SR and CR.

\begin{table}
	\caption{Description of data sets (left)
		and comparison of SR and CR on the testing set (right).
		For fairness, we normalize $\y^*$ by $\ee^\top \y^*$ when calculating SR and CR.}
	\label{table data sets}
	\centering
	\resizebox{\textwidth}{!}{
		{\setlength{\tabcolsep}{1pt} %
			\begin{tabular}{c||c|c|c|c|c|||r|r|r||r|r|r}
				\hline
				\raisebox{-1ex}{Data}
				& \multirow{2}{*}{$n$}
				& \multirow{2}{*}{Source}
				& \multirow{2}{*}{Index}
				& \multirow{2}{*}{$J$}
				& \multirow{2}{*}{Description}
				& \multicolumn{3}{c||}{Sharpe ratio (SR)}
				& \multicolumn{3}{c}{Cumulative return (CR)}
				\\
				\cline{7-12}
				sets& & & & &
				& \multicolumn{1}{c}{Naive} & \multicolumn{1}{|c|}{Fix} & \multicolumn{1}{|c||}{SIGA}
				& \multicolumn{1}{|c|}{Naive} & \multicolumn{1}{|c|}{Fix} & \multicolumn{1}{|c}{SIGA}
				\\ \hline
				Data 1 & 31 & \text { HK } & \text { Hang Seng } & \multirow{2}{*}{291} & Weekly prices
				&$-0.0143$	&$-0.0088$ 	& \textbf{0.0162}	&$-$0.0133 	&$-$0.0083	&\textbf{0.0157}
				\\
				\cline{1-4}
				\cline{7-12}
				Data 2 & 225 & \text { Japan } & \text { Nikkei 225 } &  & (1992 to 1997)
				&$- 0.2313$      &$- 0.1030$        &$-\textbf{0.0093}$
				&$- 0.1870$		 &$- 0.0629$        &$-\textbf{0.0057}$
				\\
				\hline
				Data 3 & 457 & \multirow{4}{*}{China} & \multirow{2}{*}{A-share} & \multirow{2}{*}{1940} & Daily prices
				&$-$0.0373 	&$-$0.0370	&\textbf{0.0079} 	&$-$0.0720	&$-$0.0709	&\textbf{0.0162}
				\\
				\cline{1-2}
				\cline{7-12}
				Data 4 & 50 &  &  &  & (2010 to 2017)
				& 0.0969 	&0.0950 	&\textbf{0.1039} 	&0.1846 	&0.1807	&\textbf{0.2037}
				\\
				\cline{1-2}
				\cline{4-12}
				Data 5 & 131 &  & \multirow{2}{*}{CSI300} & \multirow{2}{*}{583} & Daily prices
				& $-$0.0282 	&$-$0.0233 	&\textbf{$-$0.0021}	&$-$0.0224	&$-$0.0180	&\textbf{$-$0.0017}
				\\
				\cline{1-2}
				\cline{7-12}
				Data 6 & 50 &  & & & (2022 to 2024)
				& 0.0144	& 0.0130	&\textbf{0.0332}	& 0.0110	& 0.0100	&\textbf{0.0247}
				\\
				\hline
			\end{tabular}
	} }
\end{table}
\begin{figure}[!t]
	\centering
	\includegraphics[width = \textwidth]{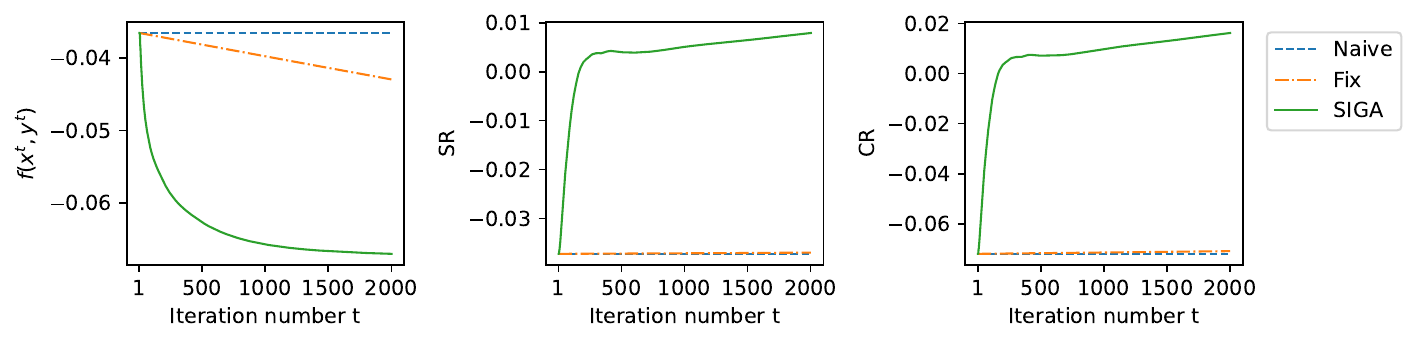}		
	\caption{Convergence curves and SR and CR on the testing set under Data~3. %
	}
	\label{fig:portfolio}
\end{figure}

We compare the performance by the following three methods, with
$\nu=1$, $\delta = 0.001$,
initial points $\frac{\ee}{n}$,
the stopping criterion being iteration number $t$ exceeding 2000.
\\
(i) Naive: the naive evenly weighted portfolio, with all weights being $\frac{1}{n}$;
\\
(ii) Fix:
fixed-point iterations to solve PVI with fixed parameters $\aa = \boldsymbol{\mathrm{0}}, \bb = \ee, \eta = 1$;
\\
(iii) SIGA: SIGA to solve~\eqref{eq:SR bilevel}, with $p = 0.001, \mu_0 = 0.001, \zeta_0 = 0.01, \tau_0 = 0.01$. 
Following~\cite{qi2000new},
we use the CHKS smoothing function
\[
	\begin{aligned}
		(\Psi_{\mu} (\x,\y))_i
		= \tfrac{1}{2} \Big( & l_i(\x) + \sqrt{(l_i(\x) - \Phi_i(\x,\y))^2 + 4 \mu^2} \\[-1ex]
		+ & u_i(\x) - \sqrt{(u_i(\x) - \Phi_i(\x,\y))^2 + 4 \mu^2} \Big),
		\text{ for } i = 1,2,\cdots,n.
	\end{aligned}
\]

Comparison of SR and CR among Naive, Fix, and SIGA is reported in Table~\ref{table data sets}.
The convergence curve, SR and CR with iteration number $t$ under Data 3 as an instance are shown in Figure~\ref{fig:portfolio}.
From the comparison,
SIGA shows advantages for investors over Naive and Fix methods regarding SR and~CR.

\section{Conclusions}
This paper investigates problem \eqref{eq:1equation constraint} which has a PVI constraint defined on a moving set. Problem \eqref{eq:1equation constraint} can be classified to MPEC. However, most existing results on MPEC only have parameters in the function of the PVI
with a parameter-independent set.
Problem \eqref{eq:1equation constraint} with the PVI on a moving set has important applications in engineering and finance.
We show that the solution function of the PVI is Lipschitz continuous with respect to the  upper-level decision variable $\x$ and the optimal solution set of~\eqref{eq:1equation constraint} is nonempty and bounded. Moreover, Section~\ref{sec:exact penalty} gives interesting results that the constraint system of~\eqref{eq:1equation constraint} satisfies metric regularity automatically, requiring no additional assumptions for stationary point characterization. To overcome the nonsmoothness of the projection operator induced by the moving set, we introduce the smoothing approximation and analyze the consistency between~\eqref{eq:1equation constraint} and the smoothed problem~\eqref{eq:1equation constraint smoothing}.
Based on the smoothing approximation, we propose SIGA with a novel scheme for updating the smoothing parameter and prove the convergence of SIGA to a stationary point of  problem~\eqref{eq:1equation constraint}.
Numerical experiments with real data on portfolio management validate the efficient performance of SIGA.

\bibliographystyle{siam}
\bibliography{bibbib}

@article{steinbach2001markowitz,
	title={Markowitz revisited: Mean-variance models in financial portfolio analysis},
	author={Steinbach, Marc C},
	journal={SIAM Rev.},
	journal={SIAM Review},
	volume={43},
	number={1},
	pages={31--85},
	year={2001},
	publisher={SIAM}
}

@article{ye2000multiplier,
	title={Multiplier rules under mixed assumptions of differentiability and Lipschitz continuity},
	author={Ye, Jane J},
	journal={SIAM J. Control Optim.},
	volume={39},
	number={5},
	pages={1441--1460},
	year={2000},
	publisher={SIAM}
}

@book{mordukhovich2018variational,
	title={Variational Analysis and Applications},
	author={Mordukhovich, Boris Sholimovich},
	volume={30},
	year={2018},
	publisher={Springer}
}

@article{lin2009solving,
	title={Solving stochastic mathematical programs with equilibrium constraints via approximation and smoothing implicit programming with penalization},
	author={Lin, Gui-Hua and Chen, Xiaojun and Fukushima, Masao},
	journal={Math. Program.},
	volume={116},
	number={1},
	pages={343--368},
	year={2009},
	publisher={Springer}
}

@article{labbe2021bilevel,
	title={Bilevel network design},
	author={Labb{\'e}, Martine and Marcotte, Patrice},
	journal={Network Design with Applications to Transportation and Logistics},
	pages={255--281},
	year={2021},
	publisher={Springer}
}

@article{ye1997necessary,
	title={Necessary optimality conditions for optimization problems with variational inequality constraints},
	author={Ye, Jane J and Ye, XY},
	journal={Math. Oper. Res.},
	volume={22},
	number={4},
	pages={977--997},
	year={1997},
	publisher={INFORMS}
}

@article{robinson1980strongly,
	title={Strongly regular generalized equations},
	author={Robinson, Stephen M},
	journal={Math. Oper. Res.},
	volume={5},
	number={1},
	pages={43--62},
	year={1980},
	publisher={INFORMS}
}

@article{ye2000constraint,
	title={Constraint qualifications and necessary optimality conditions for optimization problems with variational inequality constraints},
	author={Ye, Jane J},
	journal={SIAM J. Optim.},
	volume={10},
	number={4},
	pages={943--962},
	year={2000},
	publisher={SIAM}
}

@article{gfrerer2017new,
	title={New constraint qualifications for mathematical programs with equilibrium constraints via variational analysis},
	author={Gfrerer, Helmut and Ye, Jane J},
	journal={SIAM J. Optim.},
	volume={27},
	number={2},
	pages={842--865},
	year={2017},
	publisher={SIAM}
}

@article{chen2018smoothing,
	title={A smoothing direct search method for {Monte Carlo-based} bound constrained composite nonsmooth optimization},
	author={Chen, Xiaojun and Kelley, CT and Xu, Fengmin and Zhang, Zaikun},
	journal={SIAM J. Sci. Comput.},
	volume={40},
	number={4},
	pages={A2174--A2199},
	year={2018},
	publisher={SIAM}
}

@article{yen1995lipschitz,
	title={Lipschitz continuity of solutions of variational inequalities with a parametric polyhedral constraint},
	author={Yen, Nguyen Dong},
	journal={Math. Oper. Res.},
	volume={20},
	number={3},
	pages={695--708},
	year={1995},
	publisher={INFORMS}
}

@book{clarke1990optimization,
	title={Optimization and Nonsmooth Analysis},
	author={Clarke, Frank H},
	year={1990},
	publisher={SIAM}
}

@inproceedings{liu2023averaged,
	title={Averaged method of multipliers for bi-level optimization without lower-level strong convexity},
	author={Liu, Risheng and Liu, Yaohua and Yao, Wei and Zeng, Shangzhi and Zhang, Jin},
	booktitle={International Conference on Machine Learning},
	pages={21839--21866},
	year={2023},
	organization={PMLR},
	journal={arXiv preprint arXiv:2302.03407},
}

@article{qi2000new,
	title={A new look at smoothing {Newton} methods for nonlinear complementarity problems and box constrained variational inequalities},
	author={Qi, Liqun and Sun, Defeng and Zhou, Guanglu},
	journal={Math. Program.},
	volume={87},
	pages={1--35},
	year={2000},
	publisher={Springer}
}

@article{chen2004smoothing,
	title={A smoothing method for a mathematical program with {P-matrix} linear complementarity constraints},
	author={Chen, Xiaojun and Fukushima, Masao},
	journal={Comput. Optim. Appl.},
	volume={27},
	pages={223--246},
	year={2004},
	publisher={Springer}
}

@article{outrata1995numerical,
	title={A numerical approach to optimization problems with variational inequality constraints},
	author={Outrata, Ji{\v{r}}{\'\i} and Zowe, Jochem},
	journal={Math. Program.},
	volume={68},
	pages={105--130},
	year={1995},
	publisher={Springer}
}

@article{marcotte1986network,
	title={Network design problem with congestion effects: A case of bilevel programming},
	author={Marcotte, Patrice},
	journal={Math. Program.},
	volume={34},
	number={2},
	pages={142--162},
	year={1986},
	publisher={Springer}
}

@article{gabriel1997smoothing,
	title={Smoothing of mixed complementarity problems},
	author={Gabriel, Steven A and Mor{\'e}, Jorge J},
	journal={Complementarity and Variational Problems: State of the Art},
	volume={92},
	pages={105--116},
	year={1997},
	publisher={SIAM Philadelphia}
}

@article{chen2012smoothing,
	title={Smoothing methods for nonsmooth, nonconvex minimization},
	author={Chen, Xiaojun},
	journal={Math. Program.},
	volume={134},
	pages={71--99},
	year={2012},
	publisher={Springer}
}

@article{chen1998global,
	title={Global and superlinear convergence of the smoothing {Newton} method and its application to general box constrained variational inequalities},
	author={Chen, Xiaojun and Qi, Liqun and Sun, Defeng},
	journal={Math. Comput.},
	volume={67},
	number={222},
	pages={519--540},
	year={1998}
}

@article{chen1998global2,
	title={Global and superlinear convergence of inexact {Uzawa} methods for saddle point problems with nondifferentiable mappings},
	author={Chen, Xiaojun},
	journal={SIAM J. Numer. Anal.},
	volume={35},
	number={3},
	pages={1130--1148},
	year={1998},
	publisher={SIAM}
}

@article{dutta2025nonconvex,
	title={Nonconvex Quasi-Variational Inequalities: Stability Analysis and Application to Numerical Optimization},
	author={Dutta, Joydeep and Lafhim, Lahoussine and Zemkoho, Alain and Zhou, Shenglong},
	journal={J. Optimiz. Theory App.},
	volume={204},
	number={2},
	pages={1--43},
	year={2025},
	publisher={Springer}
}

@article{liu2023hierarchical,
	title={Hierarchical Optimization-Derived Learning},
	author={Liu, Risheng and Liu, Xuan and Zeng, Shangzhi and Zhang, Jin and Zhang, Yixuan},
	journal={IEEE T. Pattern. Anal.},
	volume={45},
	number={12},
	pages={14693--14708},
	year={2023},
	publisher={IEEE}
}

@inproceedings{liu2022optimization,
	title={Optimization-Derived Learning with Essential Convergence Analysis of Training and Hyper-training},
	author={Liu, Risheng and Liu, Xuan and Zeng, Shangzhi and Zhang, Jin and Zhang, Yixuan},
	booktitle={International Conference on Machine Learning},
	pages={13825--13856},
	year={2022},
	organization={PMLR}
}

@article{mckenzie2024three,
	title={Three-operator splitting for learning to predict equilibria in convex games},
	author={McKenzie, Daniel and Heaton, Howard and Li, Qiuwei and Fung, Samy Wu and Osher, Stanley and Yin, Wotao},
	journal={SIAM J. Math. Data Sci.},
	volume={6},
	number={3},
	pages={627--648},
	year={2024},
	publisher={SIAM}
}

@inproceedings{fung2022jfb,
	title={{JFB: Jacobian-free} backpropagation for implicit networks},
	author={Fung, Samy Wu and Heaton, Howard and Li, Qiuwei and McKenzie, Daniel and Osher, Stanley and Yin, Wotao},
	booktitle={AAAI Conference on Artificial Intelligence}, 
	booktitle={Proceedings of the AAAI Conference on Artificial Intelligence}, 
	volume={36},
	number={6},
	pages={6648--6656},
	year={2022}
}

@book{beck2017first,
	title={First-order Methods in Optimization},
	author={Beck, Amir},
	year={2017},
	publisher={SIAM}
}

@book{facchinei2003finite,
	title={Finite-dimensional Variational Inequalities and Complementarity Problems},
	author={Facchinei, Francisco and Pang, Jong-Shi},
	year={2003},
	publisher={Springer}
}

@book{luo1996mathematical,
	title={Mathematical Programs with Equilibrium Constraints},
	author={Luo, Zhi-Quan and Pang, Jong-Shi and Ralph, Daniel},
	year={1996},
	publisher={Cambridge University Press}
}

@article{okuno2021lp,
	title={On $\ell_p$-hyperparameter Learning via Bilevel Nonsmooth Optimization},
	author={Okuno, Takayuki and Takeda, Akiko and Kawana, Akihiro and Watanabe, Motokazu},
	journal={J. Mach. Learn. Res.},
	volume={22},
	number={245},
	pages={1--47},
	year={2021}
}

@article{facchinei1999smoothing,
	title={A smoothing method for mathematical programs with equilibrium constraints},
	author={Facchinei, Francisco and Jiang, Houyuan and Qi, Liqun},
	journal={Math. Program.},
	volume={85},
	number={1},
	pages={107--134},
	year={1999},
	publisher={Citeseer}
}

@article{cui2023complexity,
	title={Complexity guarantees for an implicit smoothing-enabled method for stochastic {MPECs}},
	author={Cui, Shisheng and Shanbhag, Uday V and Yousefian, Farzad},
	journal={Math. Program.},
	volume={198},
	number={2},
	pages={1153--1225},
	year={2023},
	publisher={Springer}
}

@article{scholtes2001convergence,
	title={Convergence properties of a regularization scheme for mathematical programs with complementarity constraints},
	author={Scholtes, Stefan},
	journal={SIAM J. Optim.},
	volume={11},
	number={4},
	pages={918--936},
	year={2001},
	publisher={SIAM}
}

@article{chen2009class,
	title={A class of quadratic programs with linear complementarity constraints},
	author={Chen, Xiaojun and Ye, Jane J},
	journal={Set-Valued Var. Anal.},
	volume={17},
	number={2},
	pages={113--133},
	year={2009},
	publisher={Springer Verlag},
	publisher={Springer}
}

@book{dempe2002foundations,
	title={Foundations of Bilevel Programming},
	author={Dempe, Stephan},
	year={2002},
	publisher={Springer Science \& Business Media}
}

@article{wu2015inexact,
	title={An inexact {Newton} method for stationary points of mathematical programs constrained by parameterized quasi-variational inequalities},
	author={Wu, Jia and Zhang, Liwei and Zhang, Yi},
	journal={Numer. Algorithms},
	volume={69},
	number={4},
	pages={713--735},
	year={2015},
	publisher={Springer}
}

@article{chen2022efficient,
	title={An efficient threshold dynamics method for topology optimization for fluids},
	author={Chen, Huangxin and Leng, Haitao and Wang, Dong and Wang, Xiaoping},
	journal={CSIAM T. Appl. Math.},
	pages={26-56},
	volume = {3},
	number = {1},
	year={2022}
}

\appendix

\section{PVI on a moving box}\label{sec: smooth box} %
To fit our problem setting and illustrate our analysis more clearly,
we consider the  case with the PVI defined on a moving box~$$\Omega(\x) = \{ \y \in \R^n : l(\x) \le \y \le u(\x)\},$$
where
$l,u: \X \rightarrow \R^n$ are continuously differentiable.
	By letting
	$r = 2n$, $A = \left(I \ -I\right)^\top$,
	$\mathbf{c}_i = \min_{\x \in \X} l_i(\x)$ for $i=1,2,\cdots,n$,
	and $b(\x) = \left( l(\x) \ -u(\x) \right)$
	in Proposition~\ref{prop: Lipschitz proj}, %
	Assumption~\ref{assump: foundation}(\ref{assump: Lipschitz proj}) is satisfied.
		Note that we require $l(\x) < u(\x)$ for any $\x \in \X$ to guarantee the nonemptiness of $\Omega(\x)$.
		Under this moving box scenario,
		\begin{equation}\label{eq: moving box mid}
			\Psi(\x,\y) = \Proj_{\Omega(\x)} (\Phi(\x,\y)) = \mid\{l(\x),u(\x),\Phi(\x,\y)\},
		\end{equation}
		where $\mid\{\cdot,\cdot,\cdot\}$ represents the componentwise median operator.
		Assume for any $\x \in \X$, $\Phi(\x,\cdot)$ is $\LL$-Lipschitz, where $\LL \in (0,1)$,
		and then as in Proposition~\ref{prop: contraction Psi},
		Assumption~\ref{assump: foundation}(\ref{assump: contraction Psi}) is satisfied.
		
		Next we define the specific smoothing approximation of $\Psi$.
		To simplify notations,
		we respectively denote the scalar-valued functions $\psi_{\mu}$, $\psi$, $\phi$, $\widetilde{l}$, and~$\widetilde{u}$
		to be $(\Psi_{\mu})_i$, $\Psi_i$, $\Phi_i$, $l_i$, and~$u_i$
		for $i = 1 , 2, \cdots , n$.
		Then given~$\x$ and~$\y$,
		the smoothing approximation $(\Psi_{\mu})_i (\x,\y)$ is defined as
		\begin{equation} \label{eq:Psi mu integral}
			\psi_{\mu} (\x,\y)
			:= \int_{-\infty}^{\infty} \mid \Big\{ \widetilde{l}(\x), \widetilde{u}(\x), \phi(\x,\y) - \mu t\Big\} \cdot \rho(t) dt.
		\end{equation}
		Here %
		$\rho(\cdot)$ is a probability density function with finite absolute mean, satisfying that there exist $M_1, M_2 > 0$, such that
		\begin{equation*}\label{eq: density function}
			\int_{-\infty}^{\infty} \rho(t) dt = 1, \quad %
			\int_{-\infty}^{\infty} |t| \rho(t) dt \le M_1, \quad
			0 \le \rho(t) \le M_2, \forall t \in \R.
		\end{equation*}

		Note that compared to the smoothing method of a VI problem with a fixed box constraint
		where $l(\x) \equiv \bar{l} \in \R^n$ and $u(\x) \equiv \bar{u} \in \R^n$ as constant vectors independent of $\x$,
		which has been widely studied (see~\cite{chen1998global,gabriel1997smoothing,qi2000new} and~\cite[Chapter 11.8.2]{facchinei2003finite}, for instance),
		we extend it to the case where $l$ and $u$ are functions of $\x$.
		For such extension,
		the approximation enables dynamic handling of moving constraints,
		and our analysis also fits the case where the set-valued mapping
		$\Omega(\cdot)$ may degenerate to a fixed box. 
		We denote $L_l$ and $L_u$ as the Lipschitz constants of $l$ and $u$.
		To simplify notations, let
		\[
		\pi_l(\x, \y, \mu) := \frac{\phi(\x,\y) - \widetilde{l}(\x)}{\mu}, \quad
		\pi_u(\x, \y, \mu) := \frac{\phi(\x,\y) - \widetilde{u}(\x)}{\mu}.
		\]

			In the next two lemmas,
			we verify Definition~\ref{definition:smoothing} and Assumption~\ref{assumption: jacobian consistency + Lipschitz in mu} on the properties of $\Psi_\mu$ are satisfied under the moving box scenario,
			so that the analysis in Section~\ref{sec: smoothing} is applicable to the smoothing function on a moving box as a special case.

		\begin{lemma}[Verification of Definition~\ref{definition:smoothing}] \label{lem: smooth Psi}
			Definition~\ref{definition:smoothing} is satisfied
			with the gradient of \\ 
            $(\Psi_{\mu})_i (\x,\y)$ ($i = 1,2,\cdots,n$) having the form
			\begin{equation*} %
				\left\{
				\begin{aligned}
					\nabla_{\x} \psi_{\mu}(\x,\y)
					= & \int_{- \infty}^{\pi_u(\x, \y, \mu)} \rho(t) dt	\cdot \nabla \widetilde{u}(\x)
					+ \int_{\pi_u(\x, \y, \mu)}^{\pi_l(\x, \y, \mu)}   \rho(t) dt \cdot \nabla_{\x} \phi(\x,\y) \\
					&\;\; + \int_{\pi_l(\x, \y, \mu)}^{+\infty} \rho(t) dt \cdot \nabla \widetilde{l}(\x)
					\ \in \R^m, \\
					\nabla_{\y} \psi_{\mu}(\x,\y)
					= & \int_{\pi_u(\x, \y, \mu)}^{\pi_l(\x, \y, \mu)} \rho(t) dt \cdot \nabla_{\y}  \phi(\x,\y) \ \in \R^n,
				\end{aligned}\right.
			\end{equation*}
			for any $\mu \in (0,1]$, $\x \in \X$, and $\y \in \R^n$.
		\end{lemma}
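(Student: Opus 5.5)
The plan is to work componentwise with the scalar function $\psi_\mu$ in \eqref{eq:Psi mu integral}, after rewriting it as an explicit integral. For fixed $(\x,\y,\mu)$, the integrand $\mid\{\widetilde l(\x),\widetilde u(\x),\phi(\x,\y)-\mu t\}$ is piecewise in $t$, since $\widetilde l(\x)<\widetilde u(\x)$. It equals $\widetilde u(\x)$ for $t<\pi_u$, equals $\phi(\x,\y)-\mu t$ for $\pi_u\le t\le \pi_l$, and equals $\widetilde l(\x)$ for $t>\pi_l$. Here $\pi_u\le\pi_l$ because $\widetilde l<\widetilde u$. Hence
\[
\psi_\mu(\x,\y)=\widetilde u(\x)\int_{-\infty}^{\pi_u}\rho\,dt+\int_{\pi_u}^{\pi_l}(\phi(\x,\y)-\mu t)\rho(t)\,dt+\widetilde l(\x)\int_{\pi_l}^{\infty}\rho\,dt .
\]
The finiteness of $\int|t|\rho$ guarantees that the middle term is well defined.

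\textbf{Step 1 (differentiability and the gradient formula).} I will differentiate this expression with respect to $(\x,\y)$ using the Leibniz rule. The limits $\pi_u,\pi_l$ are $C^1$ in $(\x,\y)$ because $\phi,\widetilde l,\widetilde u$ are $C^1$ and $\mu>0$. The boundary terms coming from the moving limits cancel exactly, because the integrand is continuous in $t$: at $t=\pi_u$ we have $\phi-\mu\pi_u=\widetilde u$, and at $t=\pi_l$ we have $\phi-\mu\pi_l=\widetilde l$.

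A technical point is that $\rho$ is only bounded and integrable, not continuous. To avoid relying on a pointwise Leibniz rule, I will argue through $G(s)=\int_{-\infty}^s\rho$, which is Lipschitz with constant $M_2$. I will also rewrite $\int_a^b t\rho\,dt$ via the antiderivative $K(s)=\int_{-\infty}^s t\rho\,dt$. Alternatively, I will note that the integrand is Lipschitz in $(\x,\y)$ with an integrable dominating function, which justifies dominated differentiation. Its derivative in the interior of each piece is $\nabla\widetilde u$, $\nabla\phi$ or $\nabla\widetilde l$, and the kinks occur only on a $t$-null set.

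Either route gives the stated formulas. The $\x$-gradient is a convex combination of $\nabla\widetilde u$, $\nabla_\x\phi$ and $\nabla\widetilde l$, and the $\y$-gradient is $(G(\pi_l)-G(\pi_u))\nabla_\y\phi$. Continuity of these gradients follows from the continuity of $G$ and of $\pi_u,\pi_l,\nabla\phi,\nabla\widetilde l,\nabla\widetilde u$, which gives Definition~\ref{definition:smoothing}(\ref{assump: Psi mu continuously differentiable}). This step is the main obstacle, because of the limited regularity of $\rho$. I would first try the dominated-convergence argument applied to difference quotients, since the median is $1$-Lipschitz in each argument.

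\textbf{Step 2 (limit property).} The median is $1$-Lipschitz in its third argument, so
\[
|\psi_\mu(\x,\y)-\psi(\x,\y)|\le\int\mu|t|\rho(t)\,dt\le \mu M_1 .
\]
Combining this with the continuity of $\Psi$ (Lemma~\ref{lem:proj Omega continuous} together with Assumption~\ref{assump: foundation}(\ref{assump: contraction Psi})) yields $\Psi_\mu(\widetilde\x,\widetilde\y)\to\Psi(\x,\y)$, which is Definition~\ref{definition:smoothing}(\ref{assump: Psi_mu limit}).

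\textbf{Step 3 (contraction).} From the $\y$-gradient formula, $\nabla_\y\Psi_\mu(\x,\y)=D\,\nabla_\y\Phi(\x,\y)$ in row form, where $D$ is diagonal with entries $G(\pi_l)-G(\pi_u)\in[0,1]$. Therefore
\[
\|\nabla_\y\Psi_\mu\|\le\|\nabla_\y\Phi\|\le\LL .
\]
More directly, for each component the median map $s\mapsto\int\mid\{\widetilde l,\widetilde u,s-\mu t\}\rho\,dt$ is nonexpansive, so $\Psi_\mu(\x,\cdot)$ is the composition of a componentwise nonexpansive map with the $\LL$-Lipschitz map $\Phi(\x,\cdot)$. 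A componentwise $1$-Lipschitz map is nonexpansive in the $l_2$-norm, so $\Psi_\mu(\x,\cdot)$ is $\LL$-Lipschitz. This gives Definition~\ref{definition:smoothing}(\ref{assump: Psi_mu contraction and Lip}) with $L_s=\LL$.
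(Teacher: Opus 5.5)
Your proposal is correct and follows essentially the paper's route. For (i) you split the integral into the same three pieces with cancelling boundary terms; for (ii) you use continuity, adding the explicit bound $|\psi_\mu-\psi|\le \mu M_1$; for (iii) you use the componentwise nonexpansiveness of the smoothed median composed with the $L$-Lipschitz map $\Phi(\x,\cdot)$. Your dominated-convergence justification of the differentiation is a useful extra, because the paper's appeal to the fundamental theorem of calculus tacitly assumes $\rho$ is continuous at $\pi_u$ and $\pi_l$.
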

		\begin{proof}
			Condition (\ref{assump: Psi mu continuously differentiable}):
			The integral in~\eqref{eq:Psi mu integral} can be separated as
			\[
			\begin{aligned}
				\psi_{\mu} (\x,\y) %
				\! = \! & \int_{- \infty}^{\pi_u(\x, \y, \mu)} \hspace{-1em} \rho(t) dt \! \cdot \! \widetilde{u}(\x)
				\! + \! \int_{\pi_u(\x, \y, \mu)}^{\pi_l(\x, \y, \mu)} \hspace{-0.7em} \left( \phi(\x,\y) - \mu t \right) \! \rho(t) dt
				\! + \! \int_{\pi_l(\x, \y, \mu)}^{+\infty} \hspace{-0.3em} \rho(t) dt \! \cdot \! \widetilde{l}(\x) ,
			\end{aligned}
			\]
			inspired from~\cite[Lemma 2.3]{gabriel1997smoothing}.
			By calculations based on fundamental theorem of calculus and chain rule,
			after cancellation we obtain the desired result with
			\begin{equation}\label{eq:nabla Psi mu}
				\begin{aligned}
					\nabla \psi_{\mu} (\x,\y)
					= & \int_{- \infty}^{\pi_u(\x, \y, \mu)} \rho(t) dt \cdot \left(\nabla \widetilde{u}(\x), \boldsymbol{\mathrm{0}} \right)
					+ \int_{\pi_u(\x, \y, \mu)}^{\pi_l(\x, \y, \mu)}   \rho(t) dt \cdot \nabla \phi(\x,\y) \\
					& \;\; + \int_{\pi_l(\x, \y, \mu)}^{+\infty} \rho(t) dt \cdot \left(\nabla \widetilde{l}(\x), \boldsymbol{\mathrm{0}} \right).
				\end{aligned}
			\end{equation}

			Condition (\ref{assump: Psi_mu limit}):
			From~\eqref{eq:Psi mu integral}, the condition holds by the continuity of $l$, $u$, and $\Phi$.

			Condition (\ref{assump: Psi_mu contraction and Lip}):
			For any $\mu \in (0,1]$, $\x \in \X $, $\y,\y' \in \R^n$,
			similar to~\cite[Lemma 2]{qi2000new},
			\begin{equation*} %
				\begin{aligned}
					& \left| \psi_{\mu} (\x,\y) - \psi_{\mu} (\x,\y')  \right| \\[-0.8ex]
					\le & \int_{-\infty}^{+\infty} \Big| \mid \left\{ \widetilde{l}(\x),\widetilde{u}(\x), \phi(\x,\y) - \mu t \right\}  -  \mid \left\{ \widetilde{l}(\x),\widetilde{u}(\x), \phi(\x,\y') - \mu t \right\} \Big| \cdot \rho(t) dt \\
					\le & \int_{-\infty}^{+\infty} \left| \phi(\x,\y) - \phi(\x,\y') \right| \cdot \rho(t) dt
					= \left| \phi(\x,\y) - \phi(\x,\y') \right| .
				\end{aligned}
			\end{equation*}
			Since for any $\x \in \X$, $\Phi(\x,\cdot)$ is $\LL$-Lipschitz,
			then $\Psi_{\mu}(\x,\cdot)$ is $L_s$-Lipschitz with $L_s \le \LL$.
		\end{proof}

		\begin{remark}
			In Lemma~\ref{lem: smooth Psi},
			the terms related to $\nabla \widetilde{u}(\x)$ and $\nabla \widetilde{l}(\x)$ indicate the impact of moving box compared to the fixed box.
			These terms, along with the terms $\widetilde{l}(\x)$ in $\pi_l(\x, \y, \mu)$
			and $\widetilde{u}(\x)$ in $\pi_u(\x, \y, \mu)$,
			also significantly increase the difficulty of the analysis in the following lemmas.
		\end{remark}

		\begin{lemma}[Verification of Assumption~\ref{assumption: jacobian consistency + Lipschitz in mu}]
			\label{lem: smooth Psi 2}
			For any $\x \in \X$ and $\y \in \R^n$,
			$$
			\lim\limits_{\x^t \rightarrow \x, \y^t \rightarrow \y, \mu_t \downarrow 0} {\rm dist}\left( \nabla \Psi_{\mu_t} (\x^t,\y^t)^\top, \partial_C \Psi (\x,\y) \right) = 0.
			$$
		\end{lemma}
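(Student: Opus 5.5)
We argue componentwise. Since $\partial_C \Psi(\x,\y)$ is the Cartesian product of the Clarke gradients $\partial \Psi_i(\x,\y)$, it suffices to show, for each $i$, that ${\rm dist}(\nabla \psi_{\mu_t}(\x^t,\y^t), \partial \psi(\x,\y)) \to 0$. Here $\psi=\Psi_i=\mid\{\widetilde l,\widetilde u,\phi\}$.

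By Lemma~\ref{lem: smooth Psi}, formula~\eqref{eq:nabla Psi mu} writes $\nabla \psi_{\mu_t}(\x^t,\y^t)$ as a convex combination
\[
\alpha_t \left(\nabla \widetilde{u}(\x^t), \boldsymbol{\mathrm{0}}\right) + \beta_t \nabla \phi(\x^t,\y^t) + \gamma_t \left(\nabla \widetilde{l}(\x^t), \boldsymbol{\mathrm{0}}\right).
\]
The weights are $\alpha_t=\int_{-\infty}^{\pi_u}\rho$, $\beta_t=\int_{\pi_u}^{\pi_l}\rho$ and $\gamma_t=\int_{\pi_l}^{\infty}\rho$, with $\pi_l,\pi_u$ evaluated at $(\x^t,\y^t,\mu_t)$. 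Since $l<u$ on $\X$, we have $\pi_u<\pi_l$, so all three weights are nonnegative and sum to one.

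\textbf{Continuity of the gradients.} The functions $l,u$ and $\Phi$ are $C^1$, so the three gradient vectors converge to $(\nabla \widetilde u(\x),\boldsymbol{\mathrm{0}})$, $\nabla\phi(\x,\y)$ and $(\nabla \widetilde l(\x),\boldsymbol{\mathrm{0}})$. The weights stay in the unit simplex. It therefore suffices to show that every limit point $(\bar\alpha,\bar\beta,\bar\gamma)$ of the weights produces a vector
\[
\bar\alpha (\nabla \widetilde u(\x),\boldsymbol{\mathrm{0}})+\bar\beta\nabla\phi(\x,\y)+\bar\gamma(\nabla \widetilde l(\x),\boldsymbol{\mathrm{0}})
\]
that lies in $\partial\psi(\x,\y)$. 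A subsequence argument then gives the distance statement, because $\partial\psi(\x,\y)$ is closed.

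\textbf{Case analysis at the limit point.} We split according to the position of $\phi(\x,\y)$ relative to $\widetilde l(\x)<\widetilde u(\x)$.

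\begin{enumerate}[(a)]
\item If $\widetilde l(\x)<\phi(\x,\y)<\widetilde u(\x)$, then $\pi_u\to-\infty$ and $\pi_l\to+\infty$. Hence $\beta_t\to1$. Near $(\x,\y)$ we have $\psi=\phi$, so $\partial\psi(\x,\y)=\{\nabla\phi(\x,\y)\}$.
\item If $\phi(\x,\y)>\widetilde u(\x)$, then $\pi_u,\pi_l\to+\infty$, so $\alpha_t\to1$. Locally $\psi=\widetilde u$, which matches.
\item The case $\phi(\x,\y)<\widetilde l(\x)$ is symmetric and gives $\gamma_t\to1$.
\item If $\phi(\x,\y)=\widetilde u(\x)$, then $\pi_l\to+\infty$, so $\gamma_t\to0$. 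Any limit has the form $\bar\alpha(\nabla\widetilde u,\boldsymbol{\mathrm{0}})+(1-\bar\alpha)\nabla\phi$. Locally $\psi=\min\{\widetilde u,\phi\}$, the minimum of two $C^1$ functions. By Clarke's rule for pointwise max/min of $C^1$ functions \cite[Proposition 2.3.12]{clarke1990optimization}, $\partial\psi(\x,\y)\supseteq{\rm conv}\{(\nabla\widetilde u(\x),\boldsymbol{\mathrm{0}}),\nabla\phi(\x,\y)\}$; the rule only gives inclusion, and the full hull is all we need.
\item The case $\phi(\x,\y)=\widetilde l(\x)$ is analogous, with $\psi=\max\{\widetilde l,\phi\}$ locally. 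Because $l<u$ strictly, the two kinks never coincide, so no three-term case arises.
\end{enumerate}

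\textbf{Main obstacle.} The delicate points are the kink cases (d) and (e). One must ensure that the third weight truly vanishes and that the Clarke gradient of $\min$/$\max$ of $C^1$ functions contains the entire convex hull. The first follows from $|\pi_l|\to\infty$: the other bound is at a fixed positive distance $\widetilde u(\x)-\widetilde l(\x)$ while $\mu_t\downarrow0$, and $\rho$ is integrable. For the second, both active pieces have distinct one-sided limiting gradients along suitable directions. If those gradients coincide the hull is a single point, which is trivially fine. Otherwise, the generalized gradient (Clarke's formula as the convex hull of limits of gradients at differentiability points) contains both gradients, since each is attained on the open region where its piece is strictly active. Density of such regions near $(\x,\y)$ holds whenever the gradients differ, because the difference function then has a nonzero gradient and both sign regions are nonempty. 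This completes the plan.
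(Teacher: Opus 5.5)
Your proposal is correct and takes essentially the same route as the paper: write $\nabla\psi_{\mu_t}(\x^t,\y^t)$ as the simplex-weighted combination from \eqref{eq:nabla Psi mu}, pass to accumulation points, and run the five-case analysis on the position of $\phi(\x,\y)$ relative to $\widetilde l(\x)<\widetilde u(\x)$ via the limits of $\pi_l,\pi_u$. Your weight-limit parametrization of the kink cases is a compressed form of the paper's split into $\pi_l\to\widetilde c,\pm\infty$, and you additionally justify the kink formula for $\partial\psi(\x,\y)$ that the paper simply quotes. One citation slip: \cite[Proposition 2.3.12]{clarke1990optimization} gives $\subseteq$ in general and equality when the pieces are regular, which $C^1$ pieces are (apply it to $\min\{\widetilde u,\phi\}=-\max\{-\widetilde u,-\phi\}$), so your ``$\supseteq$, only inclusion'' phrasing reverses the direction; your fallback gradient-limit argument nonetheless establishes the inclusion you need.
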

		\begin{proof}  %
			Inspired from~\cite[p.~524]{chen1998global},
			for any $\x \in \X$ and $\y \in \R^n$,
			from~\eqref{eq: moving box mid},
				\begin{equation}\label{eq: subdifferential}
					\partial \psi (\x,\y)=
					\left\{ \begin{aligned}
						&\left\{ \left(\nabla \widetilde{l}(\x), \boldsymbol{\mathrm{0}} \right) \right\} && \hspace{-1em} \text{ if } \phi(\x,\y) < \widetilde{l}(\x)  \\
						&\left\{ \bar{c} \cdot \left(\nabla \widetilde{l}(\x), \boldsymbol{\mathrm{0}} \right)
						+ (1 - \bar{c}) \cdot \nabla \phi(\x,\y) : \bar{c} \in [0,1] \right\} && \hspace{-1em} \text{ if } \phi(\x,\y) = \widetilde{l}(\x)  \\
						&\left\{ \nabla \phi(\x,\y) \right\} && \hspace{-4.2em} \text{ if } \widetilde{l}(\x) < \phi(\x,\y) < \widetilde{u}(\x)  \\
						&\left\{ \bar{c} \cdot \left(\nabla \widetilde{u}(\x), \boldsymbol{\mathrm{0}} \right)
						+ (1 - \bar{c}) \cdot \nabla \phi(\x,\y) : \bar{c} \in [0,1] \right\} && \hspace{-1em} \text{ if } \phi(\x,\y) = \widetilde{u}(\x) \\
						&\left\{ \left(\nabla \widetilde{u}(\x), \boldsymbol{\mathrm{0}} \right) \right\} && \hspace{-1em} \text{ if } \phi(\x,\y) > \widetilde{u}(\x) . \\
					\end{aligned}				
					\right.
				\end{equation}
				For any sequence $\{(\x^t,\y^t,\mu_t)\}$ such that $\x^t \rightarrow \x, \y^t \rightarrow \y, \mu_t \downarrow 0$,
				from~\eqref{eq:nabla Psi mu}
				and the continuous differentiability of $l$, $u$, and $\Phi$,
				we have $\nabla \psi_{\mu_t} (\x^t,\y^t)$ is bounded in $t$.

				Next we prove that any accumulation point of $\left\{ \nabla \psi_{\mu_t} (\x^t,\y^t) \right\}$ belongs to the set $\partial \psi (\x,\y)$.
				Consider the case where $\phi(\x,\y) = \widetilde{l}(\x)$.
				The accumulation point of $\pi_l(\x^t, \y^t, \mu_t)$ %
				has three possible values: $\widetilde{c} \in \R$, $+\infty$, and $-\infty$.
				Suppose $\{(\x^{t_j},\y^{t_j}, \mu_{t_j})\}$ to be any subsequence of $\{(\x^t,\y^t,\mu_t)\}$
				such that
				\begin{equation}\label{eq:jacobian consistency subsequence}
					\lim\limits_{\x^{t_j} \rightarrow \x, \y^{t_j} \rightarrow \y, \mu_{t_j} \downarrow 0}
					\pi_l(\x^{t_j}, \y^{t_j}, \mu_{t_j})
				\end{equation}
				takes these three values. Since $\widetilde{l}(\x) < \widetilde{u}(\x)$ for any $\x \in \X$, we have
				\[
				\lim\limits_{\x^{t_j} \rightarrow \x, \y^{t_j} \rightarrow \y, \mu_{t_j} \downarrow 0}
				\pi_u(\x^{t_j}, \y^{t_j}, \mu_{t_j})
				= -\infty.
				\]
				If~\eqref{eq:jacobian consistency subsequence} is $\widetilde{c} \in \R$,
				then from~\eqref{eq:nabla Psi mu} and the continuous differentiability of $l$, $u$, and $\Phi$,
				\[
				\begin{aligned}
					\lim_{\x^{t_j} \rightarrow \x, \y^{t_j} \rightarrow \y, \mu_{t_j} \downarrow 0} \!\!\!\!\!
					\nabla \psi_{\mu_{t_j}} (\x^{t_j},\y^{t_j}) = &
					\int_{-\infty}^{\widetilde{c}}  \!\! \rho(t) dt \cdot \nabla \phi(\x,\y)
					+ \int_{\widetilde{c}}^{+\infty} \!\!\! \rho(t) dt \! \cdot \!\! \left(\nabla \widetilde{l}(\x), \boldsymbol{\mathrm{0}} \right) \\[-1ex]
					= & \ \bar{c} \cdot \left(\nabla \widetilde{l}(\x), \boldsymbol{\mathrm{0}} \right)
					+ (1 - \bar{c}) \cdot \nabla \phi(\x,\y),
				\end{aligned}	
				\]
				where $\bar{c} := \int_{\widetilde{c}}^{+\infty} \rho(t) dt \in [0,1]$
				and $\int_{-\infty}^{\widetilde{c}}   \rho(t) dt  = 1 - \bar{c}$;
				similarly, if~\eqref{eq:jacobian consistency subsequence} is $+\infty$,
				the limit takes $\nabla \phi(\x,\y)$;
				and if~\eqref{eq:jacobian consistency subsequence} is $-\infty$, the limit takes $\left(\nabla \widetilde{l}(\x), \boldsymbol{\mathrm{0}} \right)$.
				To sum up, if $\phi(\x,\y) = \widetilde{l}(\x)$,
				$$
				\lim\limits_{\x^{t_j} \rightarrow \x, \y^{t_j} \rightarrow \y, \mu_{t_j} \downarrow 0} \nabla \psi_{\mu_{t_j}} (\x^{t_j},\y^{t_j})
				\in \partial \psi (\x,\y),
				$$
				and the other four cases in~\eqref{eq: subdifferential} parallel similarly.
				Hence, for $i = 1,2,\cdots,n$,
				\[
				\lim_{\x^t \rightarrow \x, \y^t \rightarrow \y, \mu_t \downarrow 0} {\rm dist}\left( \nabla (\Psi_{\mu_t})_i (\x^t,\y^t), \partial \Psi_i (\x,\y) \right) = 0,
				\]
				which derives the desired result.
			\end{proof}

Next in Lemma~\ref{lem: Verification of Assumption Psi}, we verify Assumption~\ref{assumption: Psi mu}  under the moving box scenario.
Before that,
we provide two preliminary lemmas.
The first one can be easily verified
from $\mid \{a_1,a_2,a_3\}  = \min \big\{ \max\{a_1, a_2\}, a_3 \big\} $ for any $a_1,a_2,a_3 \in \R$.
\begin{lemma}[Lipschitz continuity of the median operator] \label{lem: Lipschitz continuity of the median operator}
	Given any two sets of numbers \\ 
    $\{a_1,a_2,a_3\}$ and $\{\tilde{a}_1,\tilde{a}_2,\tilde{a}_3\}$, it holds that
	\[
	\Big| \mid \{a_1,a_2,a_3\} - \mid \{\tilde{a}_1,\tilde{a}_2,\tilde{a}_3\} \Big|
	\le |a_1 - \tilde{a}_1| + |a_2 - \tilde{a}_2| + |a_3 - \tilde{a}_3|.
	\]
\end{lemma}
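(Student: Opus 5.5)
The plan is to use the representation $\mid\{a_1,a_2,a_3\} = \min\{\max\{a_1,a_2\},a_3\}$, valid when $a_1\le a_2$ (more generally, the median equals $\max\{\min\{a_1,a_2\},\min\{\max\{a_1,a_2\},a_3\}\}$, which holds without any ordering). The estimate then reduces to the elementary fact that $\max$ and $\min$ of two reals are $1$-Lipschitz with respect to the $\ell_1$ norm of their arguments.

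First I will record the two-argument inequalities
\[
|\max\{a,b\}-\max\{\tilde a,\tilde b\}|\le \max\{|a-\tilde a|,|b-\tilde b|\}\le |a-\tilde a|+|b-\tilde b|,
\]
and the identical bound for $\min$. Each follows from $\max\{a,b\}\le \max\{\tilde a+|a-\tilde a|,\tilde b+|b-\tilde b|\}\le \max\{\tilde a,\tilde b\}+\max\{|a-\tilde a|,|b-\tilde b|\}$, together with the same argument after swapping the roles of $(a,b)$ and $(\tilde a,\tilde b)$. The $\min$ case follows from $\min\{a,b\}=-\max\{-a,-b\}$.

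Next I will use the general identity $\mid\{a_1,a_2,a_3\}=\max\bigl\{\min\{a_1,a_2\},\min\{\max\{a_1,a_2\},a_3\}\bigr\}$, checked by a short case analysis on the position of $a_3$ relative to $\min\{a_1,a_2\}$ and $\max\{a_1,a_2\}$. In the setting of interest, $\widetilde l(\x)<\widetilde u(\x)$, and the simpler formula from the text applies directly.

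The main step is to chain the sharper form with the $\max$-norm, which gives
\[
|\mid\{a\}-\mid\{\tilde a\}|\le \max\bigl\{|\min\{a_1,a_2\}-\min\{\tilde a_1,\tilde a_2\}|,\ |\min\{\max\{a_1,a_2\},a_3\}-\min\{\max\{\tilde a_1,\tilde a_2\},\tilde a_3\}|\bigr\}.
\]
Each of the two inner terms is bounded by $\max_i|a_i-\tilde a_i|$, again by the two-argument bounds. This yields the stronger conclusion $\max_i|a_i-\tilde a_i|$, which is at most $\sum_i|a_i-\tilde a_i|$. Using the max-norm version throughout is what prevents the constants from compounding. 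If I had instead chained the cruder $\ell_1$ bounds, the $a_1,a_2$ terms could be counted twice, and that double counting is the only place where care is needed.
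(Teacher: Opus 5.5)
Your proof is correct and rests on the same idea as the paper's one-line argument: express the median through two-argument $\max$/$\min$ operations and use that these are nonexpansive. The difference is in the identity.

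The paper invokes $\mathrm{mid}\{a_1,a_2,a_3\}=\min\{\max\{a_1,a_2\},a_3\}$ ``for any'' reals, which is false as written. For $(a_1,a_2,a_3)=(1,2,0)$ the median is $1$ while the formula gives $0$. The formula does hold under the ordering $a_1\le a_3$, since it then clips $a_2$ to $[a_1,a_3]$. So the paper's route directly gives the lemma only when both triples satisfy such an ordering. They do in the application, after rewriting $\mathrm{mid}\{\widetilde l,\widetilde u,\phi\}=\min\{\max\{\widetilde l,\phi\},\widetilde u\}$. There each variable appears once, so the $\ell_1$ bound follows with no double counting.

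Your identity $\max\{\min\{a_1,a_2\},\min\{\max\{a_1,a_2\},a_3\}\}$ holds unconditionally, so it proves the lemma as stated for arbitrary pairs of triples. The price is that $a_1,a_2$ appear twice. You handle this correctly by carrying the $\ell_\infty$ bound through, which even yields the sharper estimate $\max_i|a_i-\tilde a_i|$.

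Your side remarks, however, are wrong and should be removed. The simple formula is not valid under $a_1\le a_2$; the same triple $(1,2,0)$ refutes it. A sufficient condition is $a_1\le a_3$. Consequently, in the setting $\mathrm{mid}\{\widetilde l,\widetilde u,\phi\}$ with $\widetilde l<\widetilde u$, that formula gives $\min\{\widetilde u,\phi\}$ rather than the projection, so it does not ``apply directly.'' Your main chain never uses these remarks, so the proof itself is unaffected.
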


\begin{lemma}\label{lem:int - int} %
	Suppose for any $\y \in \R^n$, %
	$\Phi(\cdot,\y)$ is $L_1$-Lipschitz over $\X$.
	Then for any $\x, \x' \in \X, \y, \y' \in \R^n, \mu \in (0,1]$,
	\begin{enumerate}[(i)]
		\item \label{lem:int - int x}
		$\begin{aligned}[t]
			\left| \int_{\pi_u(\x, \y, \mu)}^{\pi_l(\x, \y, \mu)} \rho(t) dt
			- \int_{\pi_u(\x', \y, \mu)}^{\pi_l(\x', \y, \mu)} \rho(t) dt \right|
			\le & \frac{M_2 \left( L_l + L_u + 2 L_1 \right)}{\mu}  \| \x - \x' \|;
		\end{aligned}$
		\vspace{0.2em}
		\item \label{lem:int - int y}
		$\begin{aligned}[t]
			& \left| \int_{\pi_u(\x, \y, \mu)}^{\pi_l(\x, \y, \mu)} \rho(t) dt - \int_{\pi_u(\x, \y', \mu)}^{\pi_l(\x, \y', \mu)} \rho(t) dt \right|
			\le \frac{2 M_2 L}{\mu}  \| \y - \y' \|.
		\end{aligned}$

		\end{enumerate}
	\end{lemma}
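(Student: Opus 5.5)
The plan is to reduce both parts to the elementary bound that, for $a\le b$ and $a'\le b'$ (or more generally any real endpoints),
\[
\Big|\int_a^b \rho(t)\,dt - \int_{a'}^{b'}\rho(t)\,dt\Big| \le \Big|\int_{a'}^{a}\rho(t)\,dt\Big| + \Big|\int_{b'}^{b}\rho(t)\,dt\Big| \le M_2\big(|a-a'| + |b-b'|\big),
\]
which follows by writing $\int_a^b - \int_{a'}^{b'} = \int_{b'}^{b} - \int_{a'}^{a}$ and using $0\le\rho\le M_2$. Here we take $a=\pi_u(\cdot)$ and $b=\pi_l(\cdot)$ at the two points being compared. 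Note that the ordering of the endpoints is irrelevant, since we work with signed integrals.

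For part (i), fix $\y$ and $\mu$. We then need to bound $|\pi_l(\x,\y,\mu)-\pi_l(\x',\y,\mu)|$ and $|\pi_u(\x,\y,\mu)-\pi_u(\x',\y,\mu)|$. By definition,
\[
|\pi_l(\x,\y,\mu)-\pi_l(\x',\y,\mu)| \le \tfrac{1}{\mu}\big(|\phi(\x,\y)-\phi(\x',\y)| + |\widetilde{l}(\x)-\widetilde{l}(\x')|\big) \le \tfrac{L_1+L_l}{\mu}\|\x-\x'\|.
\]
This uses that the component $\phi=\Phi_i$ inherits the $L_1$-Lipschitz property of $\Phi(\cdot,\y)$, and that $\widetilde{l}=l_i$ is $L_l$-Lipschitz as a component of $l$. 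The analogous bound for $\pi_u$ gives $\tfrac{L_1+L_u}{\mu}\|\x-\x'\|$. Summing and multiplying by $M_2$ yields exactly $\tfrac{M_2(L_l+L_u+2L_1)}{\mu}\|\x-\x'\|$.

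For part (ii), fix $\x$ and $\mu$. The terms $\widetilde{l}(\x)$ and $\widetilde{u}(\x)$ cancel, so
\[
|\pi_l(\x,\y,\mu)-\pi_l(\x,\y',\mu)| = |\pi_u(\x,\y,\mu)-\pi_u(\x,\y',\mu)| = \tfrac{1}{\mu}|\phi(\x,\y)-\phi(\x,\y')|.
\]
Using the standing assumption of this appendix that $\Phi(\x,\cdot)$ is $\LL$-Lipschitz, each component satisfies $|\phi(\x,\y)-\phi(\x,\y')|\le \LL\|\y-\y'\|$. The elementary bound then gives $\tfrac{2M_2\LL}{\mu}\|\y-\y'\|$, which is the stated constant with $L=\LL$.

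I do not expect a real obstacle. The only point requiring care is stating the endpoint-difference inequality correctly, since the interval may be empty or reversed; the signed-integral decomposition handles this, and $\mu\in(0,1]$ plays no role beyond appearing in the denominator.
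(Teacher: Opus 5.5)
Your proposal is correct and is essentially the paper's argument. The paper likewise rewrites the difference of the two integrals as $\int_{\pi_u(\x,\y,\mu)}^{\pi_u(\x',\y,\mu)}\rho(t)\,dt-\int_{\pi_l(\x,\y,\mu)}^{\pi_l(\x',\y,\mu)}\rho(t)\,dt$, bounds each piece by $M_2$ times the endpoint displacement, and applies the Lipschitz constants $L_1$, $L_l$, $L_u$; part (ii) is handled the same way with $L$, since the $\widetilde{l}$ and $\widetilde{u}$ terms cancel.
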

	\begin{proof}
		The result~(\ref{lem:int - int x}) comes from
		$$\begin{aligned}
			& \left| \int_{\pi_u(\x, \y, \mu)}^{\pi_l(\x, \y, \mu)}  \rho(t) dt
			- \int_{\pi_u(\x', \y, \mu)}^{\pi_l(\x', \y, \mu)} \rho(t) dt \right|
			= \left| \int_{\pi_u(\x, \y, \mu)}^{\pi_u(\x', \y, \mu)} \rho(t) dt
			- \int_{\pi_l(\x, \y, \mu)}^{\pi_l(\x', \y, \mu)} \rho(t) dt
			\right| \\
			\le & \frac{M_2}{\mu} \Big( 2 \left| \phi(\x,\y) - \phi(\x',\y) \right|
			+ \left| \widetilde{u}(\x) - \widetilde{u}(\x') \right|
			+ | \widetilde{l}(\x) - \widetilde{l}(\x') | \Big) ,
		\end{aligned}	
		$$
		while~(\ref{lem:int - int y}) follows similarly.
	\end{proof}

	\begin{lemma}[Verification of Assumption~\ref{assumption: Psi mu}(\ref{lem: Psi - Psi' x})(\ref{lem: Psi - Psi' mu})(\ref{lem: norm of partial Psi * v 0})]\label{lem: Verification of Assumption Psi}
		\sloppy %
		Suppose for any $\y \in \R^n$, %
		$\Phi(\cdot,\y)$ is $L_1$-Lipschitz, %
		matrix-valued functions $\nabla \Phi(\cdot,\cdot)$ is $L_{\Phi}$-Lipschitz,
		$\nabla l(\cdot)$ is $L_{l_\x}$-Lipschitz and $\nabla u(\cdot)$ is $L_{u_\x}$-Lipschitz, under spectral norm.
		Then
		Assumption~\ref{assumption: Psi mu}(\ref{lem: Psi - Psi' x}) holds
		with $\widetilde{C} = n (L_l + L_u + L_1)$,
		(\ref{lem: Psi - Psi' mu}) holds with
		$\overline{C} = n M_1$,
		and (\ref{lem: norm of partial Psi * v 0}) holds with
		\[
		\begin{aligned}
			\widehat{C} = & \ n M_2 \cdot
			\Big( 2 \LL \left( L_l + L_u + 2 L_1 \right) + 2 \LL^2 + \left(L_u^2 + 2 L_u L_1 + 2 L_1^2 + 2 L_l L_1 + L_l^2\right) \Big) \\
			& + n \cdot \big(4 L_{\Phi} + L_{l_\x} + L_{u_\x} \big) .
		\end{aligned}	
		\]
	\end{lemma}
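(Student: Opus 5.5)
The plan is to work componentwise. For each $i$, write $\psi_\mu=(\Psi_\mu)_i$ through the integral representation \eqref{eq:Psi mu integral}. Each component estimate is then summed over $i=1,\dots,n$, which produces the factor $n$ in every constant: the vector norm is at most the sum of the component absolute values, and the spectral norm of the Jacobian is at most the sum of the row norms.

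\emph{Part (i).} Fix $\y,\mu$. I would apply Lemma~\ref{lem: Lipschitz continuity of the median operator} pointwise in $t$ inside the integral:
\[
\big|\psi_\mu(\x,\y)-\psi_\mu(\x',\y)\big|\le\int\Big(|\widetilde l(\x)-\widetilde l(\x')|+|\widetilde u(\x)-\widetilde u(\x')|+|\phi(\x,\y)-\phi(\x',\y)|\Big)\rho(t)\,dt .
\]
The right-hand side is at most $(L_l+L_u+L_1)\|\x-\x'\|$, since $\rho$ integrates to one. Summing over $i$ gives $\widetilde C=n(L_l+L_u+L_1)$.

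\emph{Part (ii).} Again use Lemma~\ref{lem: Lipschitz continuity of the median operator} pointwise. Now only the third argument changes, by $|\mu-\mu'||t|$. Integrating against $\rho$ and using $\int|t|\rho\le M_1$ gives $\overline C=nM_1$.

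\emph{Part (iii)} is the main work. I would use the explicit gradient \eqref{eq:nabla Psi mu}. Write $\alpha_u(\x,\y)=\int_{-\infty}^{\pi_u}\rho$, $\alpha_m=\int_{\pi_u}^{\pi_l}\rho$ and $\alpha_l=\int_{\pi_l}^{\infty}\rho$. These weights lie in $[0,1]$ and sum to one, and
\[
\nabla\psi_\mu=\alpha_u\,(\nabla\widetilde u,\mathbf 0)+\alpha_m\nabla\phi+\alpha_l\,(\nabla\widetilde l,\mathbf 0).
\]
To compare the gradient at $(\x,\y)$ with the gradient at $(\x',\y')$, I would pass through the intermediate point $(\x',\y)$. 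For each product I would add and subtract, bounding \emph{(weight difference)}$\times$\emph{(norm of gradient)} plus \emph{(weight)}$\times$\emph{(gradient difference)}.

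The gradient differences are controlled by $L_\Phi$, $L_{l_\x}$ and $L_{u_\x}$, with weights at most one. This yields the $\mu$-free part $n(4L_\Phi+L_{l_\x}+L_{u_\x})$. The factor $4$ on $L_\Phi$ accounts for $\nabla\phi$ appearing in two segments and in both $\x$- and $\y$-blocks.

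The weight differences are handled by Lemma~\ref{lem:int - int}, and for the one-sided weights $\alpha_u,\alpha_l$ by the same fundamental-theorem argument with $\rho\le M_2$. Each is of order $M_2/\mu$ times the Lipschitz constants of $\pi_u,\pi_l$ in $\x$ or $\y$, namely $L_u+L_1$, $L_l+L_1$, or $L$. Multiplying by the gradient norms $\|\nabla\widetilde u\|\le L_u$, $\|\nabla\widetilde l\|\le L_l$, $\|\nabla_\x\phi\|\le L_1$ and $\|\nabla_\y\phi\|\le L$ produces the quadratic combination $2L(L_l+L_u+2L_1)+2L^2+(L_u^2+2L_uL_1+2L_1^2+2L_lL_1+L_l^2)$. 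Finally, since $\mu\le1$, the $\mu$-free terms can be absorbed by writing them as $\le(\cdot)/\mu$. This yields the stated $\widehat C/\mu$.

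The main obstacle is the bookkeeping in (iii). The boundaries $\pi_l,\pi_u$ now move with $\x$ through $\widetilde l,\widetilde u$ as well as through $\phi$, so both the $\x$-block and the $\y$-block of each weight vary. Matching every cross term to the exact quadratic expression requires care about which weight is paired with which gradient. I would organize it as a table of the three weights against the two variable blocks before collecting terms.
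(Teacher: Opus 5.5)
Your proposal is correct and follows essentially the same route as the paper. Parts (i) and (ii) come from applying Lemma~\ref{lem: Lipschitz continuity of the median operator} (respectively the bound via $|\mu-\mu'||t|$ and $\int|t|\rho\le M_1$) pointwise under the integral. Part (iii) comes from the gradient formula of Lemma~\ref{lem: smooth Psi} with add-and-subtract, with Lemma~\ref{lem:int - int} and its one-sided analogue for the weight differences, and with summation of the four block/variation estimates; this reproduces exactly the stated $\widehat C$ once the $\mu$-free part is absorbed using $\mu\le 1$.
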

	\begin{proof}
		Verifying Assumption~\ref{assumption: Psi mu}(\ref{lem: Psi - Psi' mu}) is similar to Lemma~\ref{lem: smooth Psi} to verify Definition~\ref{definition:smoothing}(\ref{assump: Psi_mu contraction and Lip}),
		while (\ref{lem: Psi - Psi' x}) holds further based on Lemma~\ref{lem: Lipschitz continuity of the median operator}.	
		For (\ref{lem: norm of partial Psi * v 0}),
		from Lemma~\ref{lem: smooth Psi},
		\[
		\begin{aligned}
			&\| \nabla_{\y}  \psi_{\mu} (\x,\y)  - \nabla_{\y}  \psi_{\mu}  (\x',\y) \| \\
			= & \left\| \int_{\pi_u(\x, \y, \mu)}^{\pi_l(\x, \y, \mu)} \rho(t) dt \cdot \nabla_{\y}  \phi(\x,\y)
			- \int_{\pi_u(\x', \y, \mu)}^{\pi_l(\x', \y, \mu)} \rho(t) dt \cdot \nabla_{\y}  \phi(\x',\y) \right\| \\
			\le & \left\|  \nabla_{\y} \phi(\x,\y) -  \nabla_{\y} \phi(\x',\y)
			\right\|
			+ \left| \int_{\pi_u(\x, \y, \mu)}^{\pi_l(\x, \y, \mu)} \rho(t) dt - \int_{\pi_u(\x', \y, \mu)}^{\pi_l(\x', \y, \mu)} \rho(t) dt \right| \left\| \nabla_{\y} \phi(\x',\y)
			\right\| \\
			\le & L_{\Phi} \| \x - \x' \|
			+ \tfrac{M_2 \left( L_l + L_u + 2 L_1 \right)}{\mu}  \| \x - \x' \| \cdot \LL,
		\end{aligned}
		\]
		where the last inequality is from Lemma~\ref{lem:int - int}(\ref{lem:int - int x}).
		Then we have
		\[
		\left\| \nabla_{\y} \Psi_{\mu}(\x,\y) - \nabla_{\y} \Psi_{\mu}(\x',\y) \right\| \le n \left( L_{\Phi} + \tfrac{M_2 \LL \left( L_l + L_u + 2 L_1 \right)}{\mu}  \right)  \| \x - \x'\|.
		\]
		Similarly, from Lemma~\ref{lem: smooth Psi} and
		Lemma~\ref{lem:int - int}, we further obtain
		\[
		\begin{aligned}
			&\left\| \nabla_{\y} \Psi_{\mu}(\x,\y) - \nabla_{\y} \Psi_{\mu}(\x,\y') \right\|
			\le n \left( L_{\Phi} + \tfrac{2 M_2 L^2}{\mu} \right) \| \y - \y'\|, \\
			&\left\| \nabla_{\x} \Psi_{\mu}(\x,\y) - \nabla_{\x} \Psi_{\mu}(\x',\y) \right\|
			\le \! n \! \left( \!\!  L_{l_\x} \!\! + \!\!  L_{\Phi} \!\! + \!\!  L_{u_\x} \!\!\!  + \!\!  \tfrac{M_2 \left( L_u^2 \! + 2 L_u \! L_1 + 2 L_1^2 + 2 L_l \! L_1 + L_l^2 \right) }{\mu}
			\!\!  \right) \!\! \| \x \! - \! \x'\|, \\
			&\left\| \nabla_{\x} \Psi_{\mu}(\x,\y) - \nabla_{\x} \Psi_{\mu}(\x,\y') \right\|
			\le  n \Big( L_{\Phi} + \tfrac{M_2 L \left( L_l + L_u +  2 L_1 \right)}{\mu}  \Big) \| \y - \y'\|,
		\end{aligned}
		\]
		which derives the desired result.
	\end{proof}

\section{Lemmas for convergence analysis in Section~\ref{sec: SIGA}}

\subsection{Lemmas of properties of $\yy_{\mu}(\x)$, $\vv_{\mu}(\x)$, and $h_{\mu}(\x)$} \label{sec:appendix properties lemmas}
To begin with, Assumption~\ref{assumption: Psi mu}(\ref{lem: Psi - Psi' x}) and Definition~\ref{definition:smoothing}(\ref{assump: Psi_mu contraction and Lip}) easily derive the following Lemma.
\begin{lemma} \label{lem: blanket Lip nabla Psi * v} %
Under Assumption~\ref{assumption: Psi mu},
for any $\x \in \X, \y \in \R^n, \v \in \R^n$ and $\mu \in (0,1]$, the following three inequalities hold:
	\[
	\begin{aligned}
			 \left\| \left(\nabla_{\x} \Psi_{\mu} (\x,\y) \right)\lowtop \v \right\|
			&\le \widetilde{C} \| \v \|,  \\
			 \left\| \left(\nabla_{\y} \Psi_{\mu} (\x,\y)\right)\lowtop \v \right\|
			&\le L_s \| \v \|, \\
			 \left\| \left( I - \nabla_{\y} \Psi_{\mu} (\x,\y) \right)\lowtop \v \right\|
			&\ge (1 - L_s) \| \v \|.
		\end{aligned}
	\]
\end{lemma}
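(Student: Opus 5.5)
The plan is to read each of the three inequalities directly off a bound on the Jacobian blocks of $\Psi_\mu$. These bounds come from Assumption~\ref{assumption: Psi mu}(\ref{lem: Psi - Psi' x}) and Definition~\ref{definition:smoothing}(\ref{assump: Psi_mu contraction and Lip}). Fix $\mu\in(0,1]$, $\x\in\X$ and $\y\in\R^n$.

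\textbf{Step 1: bound the $\x$-Jacobian.} By Definition~\ref{definition:smoothing}(\ref{assump: Psi mu continuously differentiable}), $\Psi_\mu$ is continuously differentiable. So for any direction $\dd\in\R^m$ with $\x+s\dd\in\X$, we have $\nabla_\x\Psi_\mu(\x,\y)\dd=\lim_{s\downarrow0}(\Psi_\mu(\x+s\dd,\y)-\Psi_\mu(\x,\y))/s$. Assumption~\ref{assumption: Psi mu}(\ref{lem: Psi - Psi' x}) bounds the norm of this quotient by $\widetilde{C}\|\dd\|$. This gives $\|\nabla_\x\Psi_\mu(\x,\y)\|\le\widetilde{C}$.

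One technical point is that $\X$ may not contain a full ball around $\x$, for example at boundary points or when $\X$ has empty interior. I would handle this in one of two ways. The first is to read Assumption~\ref{assumption: Psi mu}(\ref{lem: Psi - Psi' x}) together with the $C^1$ extension implicit in Definition~\ref{definition:smoothing}(\ref{assump: Psi mu continuously differentiable}) as giving the derivative bound on the relevant directions. The second is to note that in the moving-box case the explicit formula in Lemma~\ref{lem: smooth Psi} gives the bound directly. This is the only step needing care, and the first approach is the one I would try.

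\textbf{Step 2: bound the $\y$-Jacobian.} In the same way, Definition~\ref{definition:smoothing}(\ref{assump: Psi_mu contraction and Lip}) says $\Psi_\mu(\x,\cdot)$ is $L_s$-Lipschitz on all of $\R^n$. Hence the difference quotient in every direction in $\R^n$ is bounded by $L_s$, and so $\|\nabla_\y\Psi_\mu(\x,\y)\|\le L_s$. No boundary issue arises here.

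\textbf{Step 3: derive the three inequalities.}
\begin{itemize}
\item Since the spectral norm of a transpose equals that of the matrix, $\|(\nabla_\x\Psi_\mu)\lowtop\v\|\le\|\nabla_\x\Psi_\mu\|\,\|\v\|\le\widetilde{C}\|\v\|$.
\item By the same reasoning, $\|(\nabla_\y\Psi_\mu)\lowtop\v\|\le L_s\|\v\|$.
\item For the third inequality, the reverse triangle inequality gives $\|(I-\nabla_\y\Psi_\mu)\lowtop\v\|\ge\|\v\|-\|(\nabla_\y\Psi_\mu)\lowtop\v\|\ge(1-L_s)\|\v\|$. This is consistent with the singular value remark preceding Lemma~\ref{lem: nonsingular}.
\end{itemize}
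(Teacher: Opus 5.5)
Your proposal is correct and is essentially the paper's argument. The paper only remarks that Assumption~\ref{assumption: Psi mu}(i) and Definition~\ref{definition:smoothing}(iii) ``easily derive'' the lemma: the Lipschitz constants bound the spectral norms of the Jacobian blocks, transposition does not change these norms, and the third inequality follows from the reverse triangle inequality exactly as you wrote.

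On the boundary issue you flag, a cleaner fix than your first option is to prove the bound at interior points of $\X$ and then extend it to boundary points by continuity of $\nabla_{\x}\Psi_{\mu}$, since a closed convex set with nonempty interior is the closure of its interior. Only when $\X$ has empty interior must Assumption~\ref{assumption: Psi mu}(i) be read as holding on a neighborhood of $\X$, and the paper tacitly does this as well.
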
%

\begin{lemma}\label{lem: y mu x}
Under Assumption~\ref{assumption: Psi mu},
for any $\x, \x' \in \X$ and $\mu, \mu' \in (0,1]$,
\begin{enumerate}[(i)]
\item \label{lem: y(x) - y(x')}
$\| \yy_{\mu}(\x) - \yy_{\mu}(\x') \| \le \frac{\widetilde{C}}{1 - L_s} \| \x - \x' \|$;
\item \label{lem: y_mu - y_mu'}
$\|\yy_\mu(\x) - \yy_{\mu'}(\x)\|
\le
\frac{\overline{C}}{1 - L_s }
|\mu - \mu'|.$
\end{enumerate}
\end{lemma}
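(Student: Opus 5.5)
Both parts follow the fixed-point perturbation argument already used in Lemma~\ref{lem: y() Lipschitz} and in the proof of Lemma~\ref{lem:y mu x}. We combine the fixed-point identity $\yy_\mu(\x)=\Psi_\mu(\x,\yy_\mu(\x))$ with the contraction property in Definition~\ref{definition:smoothing}(\ref{assump: Psi_mu contraction and Lip}) and the Lipschitz estimates in Assumption~\ref{assumption: Psi mu}(\ref{lem: Psi - Psi' x})(\ref{lem: Psi - Psi' mu}). No differentiability of $\yy_\mu$ is needed, so Proposition~\ref{prop:differentiability of y mu x} is not used.

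For (i), fix $\mu\in(0,1]$ and $\x,\x'\in\X$. Writing each point as its own fixed point and inserting the intermediate term $\Psi_\mu(\x,\yy_\mu(\x'))$, we get
\[
\|\yy_\mu(\x)-\yy_\mu(\x')\|\le \|\Psi_\mu(\x,\yy_\mu(\x))-\Psi_\mu(\x,\yy_\mu(\x'))\|+\|\Psi_\mu(\x,\yy_\mu(\x'))-\Psi_\mu(\x',\yy_\mu(\x'))\|.
\]
The first term is at most $L_s\|\yy_\mu(\x)-\yy_\mu(\x')\|$ by Definition~\ref{definition:smoothing}(\ref{assump: Psi_mu contraction and Lip}). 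The second term is at most $\widetilde C\|\x-\x'\|$ by Assumption~\ref{assumption: Psi mu}(\ref{lem: Psi - Psi' x}). Since $L_s<1$ and both quantities are finite, we can move the $L_s$ term to the left and divide by $1-L_s$, which gives the stated bound.

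For (ii), fix $\x$ and $\mu,\mu'\in(0,1]$. The argument is the same, with intermediate term $\Psi_\mu(\x,\yy_{\mu'}(\x))$:
\[
\|\yy_\mu(\x)-\yy_{\mu'}(\x)\|\le L_s\|\yy_\mu(\x)-\yy_{\mu'}(\x)\|+\|\Psi_\mu(\x,\yy_{\mu'}(\x))-\Psi_{\mu'}(\x,\yy_{\mu'}(\x))\|.
\]
The last term is at most $\overline C|\mu-\mu'|$ by Assumption~\ref{assumption: Psi mu}(\ref{lem: Psi - Psi' mu}). Rearranging as in (i) gives the bound.

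There is no real obstacle in this lemma. The only point to check is that the contraction constant $L_s$ is uniform in both $\x$ and $\mu$. Definition~\ref{definition:smoothing}(\ref{assump: Psi_mu contraction and Lip}) guarantees this, so the resulting constants $\widetilde C/(1-L_s)$ and $\overline C/(1-L_s)$ do not depend on $\mu$. This uniformity is what the later convergence analysis needs.
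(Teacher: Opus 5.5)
Your proof is correct and follows essentially the same route as the paper. The paper also splits $\yy_\mu(\x)-\yy_\mu(\x')$ through the intermediate term $\Psi_\mu(\x,\yy_\mu(\x'))$, bounds the two pieces by $L_s\|\yy_\mu(\x)-\yy_\mu(\x')\|$ and $\widetilde{C}\|\x-\x'\|$, and rearranges using $1-L_s>0$; it states that (ii) follows in the same way from the $\mu$-Lipschitz estimate, which is exactly your argument.
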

\begin{proof}
To prove~(\ref{lem: y(x) - y(x')}),
from Definition~\ref{definition:smoothing}(\ref{assump: Psi_mu contraction and Lip})
and Assumption~\ref{assumption: Psi mu}(\ref{lem: Psi - Psi' x}), we have
\[
\begin{aligned}
\| \yy_{\mu}(\x) - \yy_{\mu}(\x') \| %
\le \ & \| \Psi_{\mu}(\x, \yy_{\mu}(\x)) \! - \! \Psi_{\mu}(\x, \yy_{\mu}(\x')) \|
\! + \! \| \Psi_{\mu}(\x, \yy_{\mu}(\x')) \! - \! \Psi_{\mu}(\x', \yy_{\mu}(\x')) \| \\[-0.5ex]
\le \ & L_s \| \yy_{\mu}(\x) - \yy_{\mu}(\x') \| + \widetilde{C} \| \x - \x' \| ,
\end{aligned}			
\]
which derives the desired result.
The result (\ref{lem: y_mu - y_mu'}) parallels from Definition~\ref{definition:smoothing}(\ref{assump: Psi_mu contraction and Lip}) and Assumption~\ref{assumption: Psi mu}(\ref{lem: Psi - Psi' mu}).
\end{proof}
\begin{lemma}\label{lem: y v bounded} %
Under Assumption~\ref{assumption: Psi mu},
there exists $\overline{M} > 0$ such that for any $\x \in \X$ and $\mu \in (0,1]$, %
$\|\vv_{\mu}(\x)\| \le \tfrac{1}{1 - L_s} \left\| \nabla_{\y} f(\x,\yy_{\mu}(\x)) \right\|
\le \overline{M}.$ %
\end{lemma}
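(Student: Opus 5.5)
The statement has two parts: the inequality $\|\vv_\mu(\x)\|\le \tfrac{1}{1-L_s}\|\nabla_\y f(\x,\yy_\mu(\x))\|$, and a uniform bound $\overline{M}$ on the right-hand side over $\X\times(0,1]$. I will handle them in that order.

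For the first inequality I start from the defining linear system \eqref{eq:v mu x}. Since $\nabla_\y H_\mu = I-\nabla_\y\Psi_\mu$, it reads
\[
\left(I-\nabla_\y\Psi_\mu(\x,\yy_\mu(\x))\right)^\top \vv_\mu(\x) = -\nabla_\y f(\x,\yy_\mu(\x)).
\]
Taking norms and applying the third inequality of Lemma~\ref{lem: blanket Lip nabla Psi * v} with $\v=\vv_\mu(\x)$ gives
\[
(1-L_s)\|\vv_\mu(\x)\|\le \|\nabla_\y f(\x,\yy_\mu(\x))\|.
\]
Dividing by $1-L_s>0$ yields the first inequality.

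For the uniform bound, Lemma~\ref{lem:y mu x}(\ref{lem: y mu x bounded}) says the sets $\F_\mu$ are uniformly bounded. So there is a compact set $K\subset\R^m\times\R^n$, for instance $\X$ times a closed ball, containing all points $(\x,\yy_\mu(\x))$ with $\x\in\X$ and $\mu\in(0,1]$. Because $f$ is continuously differentiable, $\nabla_\y f$ is continuous on $K$ and hence bounded there by some constant $M_0$. I then set $\overline{M}:=M_0/(1-L_s)$. Alternatively, Assumption~\ref{assumption: Psi mu}(\ref{assumption Lf Lipschitz}) combined with the boundedness of $K$ gives the same bound with an explicit constant.

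There is no real obstacle. The only point to check is that the bound is uniform in $\mu$. This is exactly what the uniform boundedness in Lemma~\ref{lem:y mu x}(\ref{lem: y mu x bounded}) provides, and it is why that lemma is invoked rather than the boundedness of each $\F_\mu$ separately.
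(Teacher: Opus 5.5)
Your proof is correct and follows the paper's argument. The paper applies the third inequality of Lemma~\ref{lem: blanket Lip nabla Psi * v} to the linear system \eqref{eq:v mu x} to get the first inequality. It then obtains $\overline{M}$ from the continuity of $\nabla_{\y} f$ together with the uniform boundedness of $\F_\mu$ in Lemma~\ref{lem:y mu x}(\ref{lem: y mu x bounded}), which is exactly what you do.
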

\begin{proof}
From~\eqref{eq:v mu x} and Lemma~\ref{lem: blanket Lip nabla Psi * v},
\[
\left\| \nabla_{\y} f(\x,\yy_{\mu}(\x)) \right\|
= \left\| \left(I - \nabla_{\y} \Psi_{\mu}(\x,\yy_{\mu}(\x))\right)\lowtop \vv_{\mu}(\x) \right\|
\ge (1 - L_s) \| \vv_{\mu}(\x) \|.
\]
Then the desired result is obtained
from the continuity of $\nabla_{\y} f$ and Lemma~\ref{lem:y mu x}(\ref{lem: y mu x bounded}).
\end{proof}

\begin{lemma}\label{lem: v mu x}
Under Assumption~\ref{assumption: Psi mu},
for any $\x, \x' \in \X$ and $\mu, \mu' \in (0,1]$,
$$\|\vv_{\mu}(\x) - \vv_{\mu}(\x')\|
\le \tfrac{C_{\v}}{\mu}
\| \x - \x' \|,$$
where %
$
C_{\v} :=
\tfrac{L_f + \widehat{C} \overline{M} }{1 - L_s} \left(1 + \tfrac{\widetilde{C}}{1 - L_s} \right),
$
with $\overline{M}$ defined in Lemma~\ref{lem: y v bounded}.
\end{lemma}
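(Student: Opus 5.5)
We prove the bound by working directly from the defining linear system \eqref{eq:v mu x} at the two points $\x$ and $\x'$ (same $\mu$). Abbreviate $\y:=\yy_\mu(\x)$, $\y':=\yy_\mu(\x')$, $A:=\nabla_{\y}\Psi_\mu(\x,\y)$, $A':=\nabla_{\y}\Psi_\mu(\x',\y')$. Then \eqref{eq:v mu x} reads
\[
(I-A)^\top \vv_\mu(\x) = -\nabla_{\y} f(\x,\y), \qquad (I-A')^\top \vv_\mu(\x') = -\nabla_{\y} f(\x',\y').
\]
Subtracting the two identities and regrouping gives
\[
(I-A)^\top\big(\vv_\mu(\x)-\vv_\mu(\x')\big) = \nabla_{\y} f(\x',\y')-\nabla_{\y} f(\x,\y) + (A-A')^\top \vv_\mu(\x').
\]

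The left-hand side is bounded below by $(1-L_s)\|\vv_\mu(\x)-\vv_\mu(\x')\|$, using the third inequality of Lemma~\ref{lem: blanket Lip nabla Psi * v}. The right-hand side is bounded by two terms:
\begin{itemize}
\item the first term by $L_f(\|\x-\x'\|+\|\y-\y'\|)$, via Assumption~\ref{assumption: Psi mu}(\ref{assumption Lf Lipschitz});
\item the second term by $\frac{\widehat{C}}{\mu}(\|\x-\x'\|+\|\y-\y'\|)\,\overline{M}$, via Assumption~\ref{assumption: Psi mu}(\ref{lem: norm of partial Psi * v 0}) (the $\y$-block of $\nabla\Psi_\mu$ has norm at most that of the full Jacobian) together with Lemma~\ref{lem: y v bounded} for $\|\vv_\mu(\x')\|\le\overline{M}$.
\end{itemize}

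Next we invoke Lemma~\ref{lem: y mu x}(\ref{lem: y(x) - y(x')}), which gives $\|\y-\y'\|\le \frac{\widetilde{C}}{1-L_s}\|\x-\x'\|$. Hence
\[
\|\x-\x'\|+\|\y-\y'\|\le \Big(1+\tfrac{\widetilde{C}}{1-L_s}\Big)\|\x-\x'\|.
\]
Since $\mu\in(0,1]$, we have $L_f\le L_f/\mu$, so the right-hand side is at most
\[
\frac{L_f+\widehat{C}\overline{M}}{\mu}\Big(1+\tfrac{\widetilde{C}}{1-L_s}\Big)\|\x-\x'\|.
\]
Dividing by $1-L_s$ yields exactly $C_{\v}/\mu$.

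There is no real obstacle here; the only care needed is bookkeeping. First, the $\mu$-dependence enters only through Assumption~\ref{assumption: Psi mu}(\ref{lem: norm of partial Psi * v 0}), and we absorb $L_f$ into $L_f/\mu$ using $\mu\le 1$. Second, we pair the Jacobian difference with the bounded multiplier $\vv_\mu(\x')$, rather than with the difference $\vv_\mu(\x)-\vv_\mu(\x')$, so that the uniform bound $\overline{M}$ applies.
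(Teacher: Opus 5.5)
Your proof is correct and matches the paper's argument. The paper also subtracts the two instances of \eqref{eq:v mu x}, bounds the left side below by $(1-L_s)\|\vv_\mu(\x)-\vv_\mu(\x')\|$ via Lemma~\ref{lem: blanket Lip nabla Psi * v}, pairs the Jacobian difference with $\vv_\mu(\x')$ so that Assumption~\ref{assumption: Psi mu}(iii)--(iv) and Lemma~\ref{lem: y v bounded} apply, and then closes with Lemma~\ref{lem: y mu x}(i); your explicit absorption of $L_f$ into $L_f/\mu$ via $\mu\le 1$ just spells out a step the paper leaves implicit.
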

\begin{proof}
From~\eqref{eq:v mu x}, we have
\[
\begin{aligned}
& \left(I - \nabla_{\y} \Psi_{\mu}(\x,\yy_{\mu}(\x))\right)\lowtop \left(\vv_{\mu}(\x)  - \vv_{\mu}(\x')\right) \\
= & - \left(\nabla_{\y} f(\x,\yy_{\mu}(\x)) \! - \! \nabla_{\y} f(\x',\yy_{\mu}(\x')) \right)
+ \left(\nabla_{\y} \Psi_{\mu}(\x,\yy_{\mu}(\x) \!
- \! \nabla_{\y} \Psi_{\mu}(\x',\yy_{\mu}(\x'))\right)\lowtop \! \vv_{\mu}(\x').
\end{aligned}
\]
Thus, from Lemma~\ref{lem: blanket Lip nabla Psi * v} and
Assumption~\ref{assumption: Psi mu}(\ref{lem: norm of partial Psi * v 0})(\ref{assumption Lf Lipschitz}),
\[
\begin{aligned}
(1 - L_s) \| \vv_{\mu}(\x) - &\vv_{\mu}(\x') \|
\le L_f \left(\| \x-\x' \| + \| \yy_\mu(\x) - \yy_\mu(\x') \| \right) \\
& + \left( \tfrac{\widehat{C}}{\mu} \left(\| \x - \x'\| + \| \yy_\mu(\x) - \yy_\mu(\x') \| \right)   \right) \|\vv_{\mu}(\x')\|.
\end{aligned}
\]
Then the desired result comes from Lemma~\ref{lem: y mu x}(\ref{lem: y(x) - y(x')}) and Lemma~\ref{lem: y v bounded}.
\end{proof}

\begin{lemma}\label{lem: nabla h(x) - nabla h(x')}
Under Assumption~\ref{assumption: Psi mu},
for any $\x,\x' \in \X$ and $\mu \in (0,1]$,
$$
\| \nabla h_{\mu} (\x) - \nabla h_{\mu} (\x') \| \le \tfrac{C_{h}}{\mu}
\|\x - \x'\|,
$$
where %
$C_h:=
\widetilde{C} C_{\v} + \left( L_f + \widehat{C} \overline{M} \right)\left(1 + \tfrac{\widetilde{C}}{1-L_s} \right)$,
with $C_{\v}$ defined in Lemma~\ref{lem: v mu x}
and $\overline{M}$ in Lemma~\ref{lem: y v bounded}.
\end{lemma}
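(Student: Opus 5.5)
I will start from the implicit-gradient formula \eqref{eq:nabla h_mu(x) 2}, which with $\nabla_{\x} H_\mu = -\nabla_{\x}\Psi_\mu$ reads
\[
\nabla h_\mu(\x) = \nabla_{\x} f(\x,y_\mu(\x)) - \left(\nabla_{\x}\Psi_\mu(\x,y_\mu(\x))\right)^\top v_\mu(\x).
\]
I will subtract the same expression at $\x'$ and split the difference into three pieces by adding and subtracting $\left(\nabla_{\x}\Psi_\mu(\x,y_\mu(\x))\right)^\top v_\mu(\x')$:
\[
\begin{aligned}
\nabla h_\mu(\x)-\nabla h_\mu(\x') ={}& \left[\nabla_{\x} f(\x,y_\mu(\x)) - \nabla_{\x} f(\x',y_\mu(\x'))\right] \\
& - \left(\nabla_{\x}\Psi_\mu(\x,y_\mu(\x))\right)^\top \left(v_\mu(\x)-v_\mu(\x')\right) \\
& - \left(\nabla_{\x}\Psi_\mu(\x,y_\mu(\x)) - \nabla_{\x}\Psi_\mu(\x',y_\mu(\x'))\right)^\top v_\mu(\x').
\end{aligned}
\]

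The first term is bounded by Assumption~\ref{assumption: Psi mu}(\ref{assumption Lf Lipschitz}) combined with Lemma~\ref{lem: y mu x}(\ref{lem: y(x) - y(x')}), which gives
\[
L_f\left(1+\tfrac{\widetilde{C}}{1-L_s}\right)\|\x-\x'\|.
\]
For the second term I use the first inequality of Lemma~\ref{lem: blanket Lip nabla Psi * v} to bound it by $\widetilde{C}\,\|v_\mu(\x)-v_\mu(\x')\|$. Lemma~\ref{lem: v mu x} then bounds this by $\tfrac{\widetilde{C}C_{\v}}{\mu}\|\x-\x'\|$. For the third term I take the norm of the $\x$-block of the Jacobian difference, which is at most the norm of the full Jacobian difference. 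Assumption~\ref{assumption: Psi mu}(\ref{lem: norm of partial Psi * v 0}) together with Lemma~\ref{lem: y mu x}(\ref{lem: y(x) - y(x')}) gives
\[
\tfrac{\widehat{C}}{\mu}\left(1+\tfrac{\widetilde{C}}{1-L_s}\right)\|\x-\x'\|.
\]
Multiplying by $\|v_\mu(\x')\|\le \overline{M}$ from Lemma~\ref{lem: y v bounded} gives the factor $\widehat{C}\overline{M}$.

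Summing the three bounds and using $\mu\in(0,1]$, so that $L_f \le L_f/\mu$, yields exactly the constant
\[
\frac{1}{\mu}\left(\widetilde{C}C_{\v} + (L_f+\widehat{C}\overline{M})\left(1+\tfrac{\widetilde{C}}{1-L_s}\right)\right) = \frac{C_h}{\mu}.
\]

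There is no real obstacle; the only points needing care are the following two. The transpose of a block of the Jacobian must have norm no larger than the full Jacobian difference controlled by Assumption~\ref{assumption: Psi mu}(\ref{lem: norm of partial Psi * v 0}). The $1/\mu$ factor must be absorbed uniformly, which uses $\mu\le 1$.
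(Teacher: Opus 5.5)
Your proposal is correct and matches the paper's proof essentially step for step. You use the same add-and-subtract of $\left(\nabla_{\x}\Psi_\mu(\x,y_\mu(\x))\right)^\top v_\mu(\x')$ and bound the three pieces with the same results: Assumption~\ref{assumption: Psi mu}(\ref{assumption Lf Lipschitz}) with Lemma~\ref{lem: y mu x}(\ref{lem: y(x) - y(x')}); Lemma~\ref{lem: blanket Lip nabla Psi * v} with Lemma~\ref{lem: v mu x}; and Assumption~\ref{assumption: Psi mu}(\ref{lem: norm of partial Psi * v 0}) with Lemma~\ref{lem: y v bounded}. Your two explicit remarks, that the block of the Jacobian difference is bounded by the full one and that $L_f\le L_f/\mu$ because $\mu\le 1$, are exactly the steps the paper leaves implicit.
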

\begin{proof}
From~\eqref{eq:nabla h_mu(x) 2} and Assumption~\ref{assumption: Psi mu}(\ref{assumption Lf Lipschitz}),
we have
\[
\begin{aligned}
& \| \nabla h_{\mu} (\x) - \nabla h_{\mu} (\x') \| \\
\le & L_f \|\x - \x'\| + L_f \| \yy_{\mu}(\x) - \yy_{\mu}(\x')\|
+ \left\| \left(\nabla_{\x} \Psi_{\mu}(\x,\yy_{\mu}(\x))\right)\lowtop (\vv_{\mu}(\x) - \vv_{\mu}(\x'))  \right\| \\
& + \left\|
\left(\nabla_{\x} \Psi_{\mu}(\x,\yy_{\mu}(\x)) - \nabla_{\x} \Psi_{\mu}(\x',\yy_{\mu}(\x'))\right)\lowtop \vv_{\mu}(\x') \right\|,
\\
\end{aligned}
\]
where the second term can be upper bounded by Lemma~\ref{lem: y mu x}(\ref{lem: y(x) - y(x')}),
the third term by Lemma~\ref{lem: blanket Lip nabla Psi * v} and  Lemma~\ref{lem: v mu x},
and the last term by Assumption~\ref{assumption: Psi mu}(\ref{lem: norm of partial Psi * v 0}),
Lemma~\ref{lem: y mu x}(\ref{lem: y(x) - y(x')}), and Lemma~\ref{lem: y v bounded}.
Then the desired result is obtained.
\end{proof}

\subsection{Lemmas of descent formula}\label{sec:appendix descent formula lemmas}

\begin{lemma}%
\label{lem:descent h}
Let $\{(\x^t,\y^t,\v^t)\}$ be a sequence generated by Algorithm~\ref{alg}.
Under Assumption~\ref{assumption: Psi mu},
\[
\begin{aligned}
	&h_{\mu_{t+1}}(\x^{t+1})  -  h_{\mu_t}(\x^t)  \\
	\le &- \frac{1}{2} \left(\frac{1}{\zeta_t} - \frac{C_{h}}{\mu_t} \right) \|\x^{t+1} - \x^t\|^2 
	 + \left( L_f^2 + 2 \widehat{C}^2 \overline{M}^2 \right) \cdot \frac{\zeta_t }{\mu_t^2} \cdot  \| \y^t - \yy_{\mu_t}(\x^t) \|^2 \\
	& + 2 \widetilde{C}^2 \zeta_t \| \v^t - \vv_{\mu_t}(\x^t) \|^2 
	 + \overline{C} \, \overline{M}   |\mu_t -\mu_{t+1}| + \tfrac{\overline{C}^2 L_f}{2 (1 - L_s)^2 }
	|\mu_t -\mu_{t+1}|^2, \\
\end{aligned}
\]
with $C_{h}$ defined in Lemma~\ref{lem: nabla h(x) - nabla h(x')}
and $\overline{M}$ in Lemma~\ref{lem: y v bounded}.
\end{lemma}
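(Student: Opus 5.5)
The plan is to split the change into a change in $\x$ at fixed $\mu_t$ and a change in $\mu$ at fixed $\x^{t+1}$:
\[
h_{\mu_{t+1}}(\x^{t+1}) - h_{\mu_t}(\x^t) = \big[h_{\mu_t}(\x^{t+1}) - h_{\mu_t}(\x^t)\big] + \big[h_{\mu_{t+1}}(\x^{t+1}) - h_{\mu_t}(\x^{t+1})\big].
\]
The first bracket is controlled by a descent lemma. The second produces the terms in $|\mu_t-\mu_{t+1}|$.

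\textbf{Step 1 (descent in $\x$).} By Lemma~\ref{lem: nabla h(x) - nabla h(x')}, $\nabla h_{\mu_t}$ is $C_h/\mu_t$-Lipschitz on the convex set $\X$. The standard descent lemma then gives
\[
h_{\mu_t}(\x^{t+1}) \le h_{\mu_t}(\x^t) + \nabla h_{\mu_t}(\x^t)^\top(\x^{t+1}-\x^t) + \tfrac{C_h}{2\mu_t}\|\x^{t+1}-\x^t\|^2 .
\]
Write $\nabla h_{\mu_t}(\x^t) = d_\x^t + e^t$ with $e^t := \nabla h_{\mu_t}(\x^t) - d_\x^t$. The projection step \eqref{eq:x iteration} characterizes $\x^{t+1}$, and testing that characterization at $\x^t\in\X$ gives $(d_\x^t)^\top(\x^{t+1}-\x^t)\le -\frac{1}{\zeta_t}\|\x^{t+1}-\x^t\|^2$. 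For the error term, Young's inequality gives $(e^t)^\top(\x^{t+1}-\x^t)\le \frac{\zeta_t}{2}\|e^t\|^2+\frac{1}{2\zeta_t}\|\x^{t+1}-\x^t\|^2$. Combining these yields the coefficient $-\frac12\big(\frac1{\zeta_t}-\frac{C_h}{\mu_t}\big)$ in front of $\|\x^{t+1}-\x^t\|^2$.

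\textbf{Step 2 (bounding $\|e^t\|$).} Using \eqref{eq:nabla h_mu(x) 2} and $\nabla_\x H_\mu=-\nabla_\x\Psi_\mu$, split
\[
e^t = \big[\nabla_\x f(\x^t,y_{\mu_t}(\x^t))-\nabla_\x f(\x^t,\y^t)\big] - \big[\nabla_\x\Psi_{\mu_t}(\x^t,y_{\mu_t})-\nabla_\x\Psi_{\mu_t}(\x^t,\y^t)\big]^\top v_{\mu_t} - \nabla_\x\Psi_{\mu_t}(\x^t,\y^t)^\top(v_{\mu_t}-\v^t).
\]
Each piece is bounded by an earlier result:
\begin{itemize}
\item the first by $L_f\|\y^t-y_{\mu_t}(\x^t)\|$, via Assumption~\ref{assumption: Psi mu}(\ref{assumption Lf Lipschitz});
\item the second by $\frac{\widehat C}{\mu_t}\overline M\|\y^t-y_{\mu_t}(\x^t)\|$, via Assumption~\ref{assumption: Psi mu}(\ref{lem: norm of partial Psi * v 0}) and Lemma~\ref{lem: y v bounded};
\item the third by $\widetilde C\|\v^t-v_{\mu_t}(\x^t)\|$, via Lemma~\ref{lem: blanket Lip nabla Psi * v}.
\end{itemize}
Apply $(a+b+c)^2\le 2(a+b)^2+2c^2\le 4a^2+4b^2+2c^2$, use $\mu_t\le1$ to absorb the $L_f$ part into the $1/\mu_t^2$ term, and multiply by $\zeta_t/2$. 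This gives $\big(L_f^2+2\widehat C^2\overline M^2\big)\frac{\zeta_t}{\mu_t^2}\|\y^t-y_{\mu_t}\|^2+\widetilde C^2\zeta_t\|\v^t-v_{\mu_t}\|^2$, which is within the stated constants. Matching the paper's exact constants is only bookkeeping.

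\textbf{Step 3 (change in $\mu$).} At the fixed point $\x=\x^{t+1}$, write $y=y_{\mu_t}(\x)$ and $y'=y_{\mu_{t+1}}(\x)$. Taylor's theorem with $L_f$-Lipschitz $\nabla f$ gives
\[
f(\x,y')-f(\x,y)\le \nabla_\y f(\x,y)^\top(y'-y)+\tfrac{L_f}{2}\|y'-y\|^2 .
\]
By Lemma~\ref{lem: y mu x}(\ref{lem: y_mu - y_mu'}), $\|y'-y\|\le\frac{\overline C}{1-L_s}|\mu_t-\mu_{t+1}|$, so the quadratic term is exactly $\frac{\overline C^2L_f}{2(1-L_s)^2}|\mu_t-\mu_{t+1}|^2$.

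The linear term is the delicate point. A crude bound would give $\|\nabla_\y f\|\cdot\frac{\overline C}{1-L_s}|\Delta\mu|$, whose constant is off by the factor $1/(1-L_s)$ relative to the target $\overline C\,\overline M$. To remove it, I would use \eqref{eq:v mu x}, which says $\nabla_\y f(\x,y) = -(I-\nabla_\y\Psi_{\mu_t}(\x,y))^\top v_{\mu_t}(\x)$. Then
\[
\nabla_\y f^\top(y'-y) = -v_{\mu_t}^\top\big[(y'-y)-\nabla_\y\Psi_{\mu_t}(\x,y)(y'-y)\big].
\]
The bracket is the linearization of $H_{\mu_t}(\x,y')-H_{\mu_t}(\x,y)$. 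Since $H_{\mu_t}(\x,y)=0$ and $H_{\mu_{t+1}}(\x,y')=0$, we have $H_{\mu_t}(\x,y')=\Psi_{\mu_{t+1}}(\x,y')-\Psi_{\mu_t}(\x,y')$, whose norm is at most $\overline C|\Delta\mu|$ by Assumption~\ref{assumption: Psi mu}(\ref{lem: Psi - Psi' mu}). The linear term therefore becomes $\le\overline M\,\overline C|\Delta\mu|$ plus a linearization remainder. That remainder is of order $\frac{\widehat C}{\mu_t}\|y'-y\|^2$, which is $O(|\Delta\mu|^2/\mu_t)$ and summable; fully absorbing it into the stated form may need a small constant adjustment.

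If this route proves too delicate, the fallback is to accept $\frac{\overline C\,\overline M}{1-L_s}$ as the constant. Either version is summable, so Proposition~\ref{prop:descent of Lyapunov function} is unaffected. Summing Steps 1–3 gives the claim.
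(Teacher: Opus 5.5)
Your route is the paper's: the same split into an $\x$-step at fixed $\mu_t$ and a $\mu$-change at fixed $\x^{t+1}$, the descent lemma with constant $C_h/\mu_t$, the projection inequality tested at $\x^t$, Young's inequality, and the same three-piece decomposition of $e^t$. But the lemma is an explicit inequality, so its constants are the content, and as written you do not reach them in two places.

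The main problem is Step~3. Your ``delicate point'' comes from misreading Lemma~\ref{lem: y v bounded}: there $\overline{M}$ is chosen to dominate $\frac{1}{1-L_s}\|\nabla_{\y} f(\x,y_{\mu}(\x))\|$, not merely $\|v_{\mu}(\x)\|$. Hence $\|\nabla_{\y} f(\x^{t+1},y_{\mu_t}(\x^{t+1}))\|\le(1-L_s)\overline{M}$. The crude Cauchy--Schwarz bound combined with Lemma~\ref{lem: y mu x}(\ref{lem: y_mu - y_mu'}) then gives exactly $(1-L_s)\overline{M}\cdot\frac{\overline{C}}{1-L_s}|\mu_t-\mu_{t+1}|=\overline{C}\,\overline{M}\,|\mu_t-\mu_{t+1}|$. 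That is the paper's argument; it evaluates the gradient at $y_{\mu_{t+1}}(\x^{t+1})$ instead, which the same lemma covers.

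Neither of your alternatives proves the stated bound. The fallback carries the larger constant $\overline{C}\,\overline{M}/(1-L_s)$. The multiplier route leaves an extra remainder of size about $\frac{\widehat{C}\,\overline{M}}{2\mu_t}\|y'-y\|^2\le\frac{\widehat{C}\,\overline{M}\,\overline{C}^2}{2(1-L_s)^2}\cdot\frac{|\mu_t-\mu_{t+1}|^2}{\mu_t}$. This term is absent from the statement, and its coefficient blows up as $\mu_t\downarrow 0$, so no constant adjustment absorbs it into the $|\mu_t-\mu_{t+1}|^2$ term. Folding it into the linear term instead changes the constant $\overline{C}\,\overline{M}$.

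Second, the arithmetic in Step~2 does not give what you claim. Write $a,b,c$ for your three pieces of $e^t$. With $\|a+b+c\|^2\le 4\|a\|^2+4\|b\|^2+2\|c\|^2$ and $\|a\|\le L_f\|\y^t-y_{\mu_t}(\x^t)\|$, multiplying by $\zeta_t/2$ produces $2L_f^2$, not $L_f^2$, in the coefficient of $\frac{\zeta_t}{\mu_t^2}\|\y^t-y_{\mu_t}(\x^t)\|^2$. That exceeds the stated $L_f^2+2\widehat{C}^2\overline{M}^2$. Your smaller coefficient $\widetilde{C}^2$ on the $\v$-term cannot compensate, because it multiplies a different quantity. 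Group instead as $a+(b+c)$, i.e.\ $\|e^t\|^2\le 2\|a\|^2+4\|b\|^2+4\|c\|^2$, as the paper does; this gives exactly $L_f^2+2\widehat{C}^2\overline{M}^2$ and $2\widetilde{C}^2$. With these two corrections your proof coincides with the paper's.
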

\begin{proof}
To begin with, from~\cite[Lemma 5.7]{beck2017first} and Assumption~\ref{assumption: Psi mu}(\ref{assumption Lf Lipschitz}),
\begin{equation}\label{eq:descent h_mu(x) 1}
\begin{split}
	& h_{\mu_{t+1}}(\x^{t+1}) - h_{\mu_t}(\x^{t+1})
	= f(\x^{t+1}, \yy_{\mu_{t+1}}(\x^{t+1})) - f(\x^{t+1}, \yy_{\mu_t}(\x^{t+1}))  \\
	\le & \left\langle \nabla_{\y} f\left(\x^{t+1}, \yy_{\mu_{t+1}}(\x^{t+1})\right) , \yy_{\mu_{t+1}}(\x^{t+1}) \! - \! \yy_{\mu_t}(\x^{t+1}) \right\rangle
	+ \! \tfrac{L_f}{2} \! \| \yy_{\mu_{t+1}}(\x^{t+1}) \! - \! \yy_{\mu_t}(\x^{t+1}) \|^2 \\
	\le & \overline{C} \, \overline{M}  |\mu_t -\mu_{t+1}| + \tfrac{\overline{C}^2 L_f}{2 (1 - L_s)^2 }
	|\mu_t -\mu_{t+1}|^2  ,
\end{split}
\raisetag{-0.3\baselineskip}
\end{equation}
where
the last inequality is from Lemma~\ref{lem: y mu x}(\ref{lem: y_mu - y_mu'})
and Lemma~\ref{lem: y v bounded}.

According to Lemma~\ref{lem: nabla h(x) - nabla h(x')}, for any given $\mu$,
$h_{\mu}(\cdot)$ is $\frac{C_{h}}{\mu}$-smooth over $\X$,
so from~\cite[Lemma 5.7]{beck2017first} again,
we have
\begin{equation}\label{eq:descent h_mu(x) 2}
h_{\mu_t}(\x^{t+1}) - h_{\mu_t}(\x^t)
\le \left\langle \nabla h_{\mu_t} (\x^t), \x^{t+1} - \x^t\right\rangle + \frac{C_{h}}{2 \mu_t} \|\x^{t+1} - \x^t\|^2.
\end{equation}
From~\eqref{eq:x iteration}, since $\X$ is a convex set,
by the property of the projection operator,
$$
\begin{aligned}
0 & \ge
\left\langle \x^t - \zeta_t \Big(\nabla_{\x} f(\x^t,\y^t)
+	\left( \nabla_{\x} H_{\mu_t}(\x^t,\y^t)
\right)\lowtop \v^t \Big)  - \x^{t+1}, \x^t - \x^{t+1}  \right\rangle, %
\end{aligned}		
$$
which derives
\[
0 \le - \left\langle \nabla_{\x} f(\x^t,\y^t)
+	\left( \nabla_{\x} H_{\mu_t}(\x^t,\y^t)
\right)\lowtop \v^t , \x^{t+1} - \x^t \right\rangle
- \frac{1}{\zeta_t} \| \x^{t+1} - \x^t \|^2 . \\
\]
Combining the above inequality with~\eqref{eq:descent h_mu(x) 2}, we have
\begin{gather*}\label{eq:descent h_mu(x) 3}
\begin{aligned}
	& h_{\mu_t}(\x^{t+1}) - h_{\mu_t}(\x^t)  \\
	\le & \left\langle \nabla h_{\mu_t} (\x^t) - \! \nabla_{\x} f(\x^t, \y^t )
	\! - \! \left( \nabla_{\x} H_{\mu_t}(\x^t,\y^t)
	\right)\lowtop \v^t, \x^{t+1} \! - \! \x^t \right\rangle
	\! + \!  \left( \! \frac{C_{h}}{2 \mu_t}  - \! \frac{1}{\zeta_t} \! \right) \! \|\x^{t + 1} \! - \! \x^t\|^2 \\
	\le
	& \frac{\zeta_t}{2} \left\| \nabla h_{\mu_t} (\x^t) - \nabla_{\x} f(\x^t, \y^t )
	\! - \! \left( \nabla_{\x} H_{\mu_t}(\x^t,\y^t)
	\right)\lowtop \! \v^t \right\|^2
	+ \left( \frac{C_{h}}{2 \mu_t} - \frac{1}{2 \zeta_t} \right) \! \|\x^{t + 1} \! - \! \x^t\|^2,
\end{aligned}
\end{gather*}
where the last inequality follows from the Young’s inequality with the step size $\zeta_t>0$ for any $t$.
From Assumption~\ref{assumption: Psi mu}(\ref{lem: norm of partial Psi * v 0})(\ref{assumption Lf Lipschitz}), Lemma~\ref{lem: blanket Lip nabla Psi * v},
and Lemma~\ref{lem: y v bounded},
we further have
\[
\begin{aligned}
& \left\| \nabla h_{\mu_t} \! (\x^t) \! - \! \nabla_{\! \x} f(\x^t \! , \y^t )
\! - \! \left( \nabla_{\! \x} H_{\mu_t}(\x^t \! ,\y^t)
\right)\lowtop \! \v^t \right\|^2  \\
= & \big\| \nabla_{\x} f(\x^t\! , \yy_{\mu_t}\! (\x^t)) \! + \! \left( \nabla_{\x} H_{\mu_t}(\x^t \! , \yy_{\mu_t}\! (\x^t))
\right)\lowtop \! \vv_{\mu_t}\! (\x^t) \! - \!  \nabla_{\x} f(\x^t \! , \! \y^t)
\! - \! \left( \nabla_{\x} H_{\mu_t}(\x^t \! , \! \y^t)
\right)\lowtop \!\! \v^t \big\|^2 \\
\le & 2 L_f^2 \| \yy_{\mu_t}(\x^t) - \y^t \|^2
+ 2 \left\| \left( \nabla_{\x} H_{\mu_t}(\x^t , \yy_{\mu_t}(\x^t))
\right)\lowtop  \vv_{\mu_t}(\x^t) - \left( \nabla_{\x} H_{\mu_t}(\x^t, \y^t)
\right)\lowtop \v^t \right\|^2 \\
\le & 2 L_f^2 \| \yy_{\mu_t}(\x^t) - \y^t \|^2
+ 4 \left\|  \left( \nabla_{\x} \Psi_{\mu_t}(\x^t,\yy_{\mu_t}(\x^t)) - \nabla_{\x} \Psi_{\mu_t}(\x^t,\y^t)
\right)\lowtop \vv_{\mu_t}(\x^t)    \right\|^2 \\
& + 4 \left\|  \left( \nabla_{\x} \Psi_{\mu_t}(\x^t,\y^t)
\right)\lowtop \left( \vv_{\mu_t}(\x^t) - \v^t \right)   \right\|^2
\\
\le & \left( 2 L_f^2 + \frac{4 \widehat{C}^2 \overline{M}^2}{\mu_t^2}  \right) \| \yy_{\mu_t}(\x^t) - \y^t \|^2
+ 4 \widetilde{C}^2 \| \vv_{\mu_t}(\x^t) - \v^t \|^2.
\end{aligned}			
\]
The proof is completed by combining this with~\eqref{eq:descent h_mu(x) 1} and $\mu_t \le 1$ for any $t$.
\end{proof}

\begin{lemma}%
	\label{lem:descent v}
	Let $\{(\x^t,\y^t,\v^t)\}$ be a sequence generated by Algorithm~\ref{alg}.
	Under Assumption~\ref{assumption: Psi mu},
	\[
	\begin{aligned}
		&\| \y^{t} - \yy_{\mu_t}(\x^t) \|^2
		\le \frac{\tau_t^2}{(1 - L_s)^2},
		&\| \v^{t} - \vv_{\mu_t}(\x^t) \|^2
		\le \frac{2 ( L_f^2 + \widehat{C}^2 \overline{M}^2) }{(1 - L_s)^4} \cdot \frac{\tau_t^2}{\mu_t^2},
	\end{aligned}
	\]
	with $\overline{M}$ defined in Lemma~\ref{lem: y v bounded}.
\end{lemma}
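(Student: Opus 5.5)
The first bound follows from the contraction property of $\Psi_{\mu_t}(\x^t,\cdot)$ and the stopping rule \eqref{eq:algorithm sub-step 1}. Since $\yy_{\mu_t}(\x^t)=\Psi_{\mu_t}(\x^t,\yy_{\mu_t}(\x^t))$, we write
\[
\|\y^t-\yy_{\mu_t}(\x^t)\|\le \|\y^t-\Psi_{\mu_t}(\x^t,\y^t)\|+\|\Psi_{\mu_t}(\x^t,\y^t)-\Psi_{\mu_t}(\x^t,\yy_{\mu_t}(\x^t))\|\le \tau_t+L_s\|\y^t-\yy_{\mu_t}(\x^t)\|,
\]
using Definition~\ref{definition:smoothing}(\ref{assump: Psi_mu contraction and Lip}). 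Rearranging gives $\|\y^t-\yy_{\mu_t}(\x^t)\|\le \tau_t/(1-L_s)$, and squaring gives the claim.

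For the second bound, note that $\nabla_{\y}H_{\mu_t}(\x^t,\y^t)$ is nonsingular by Lemma~\ref{lem: nonsingular}. The least-squares problem \eqref{eq:algorithm sub-step 2} therefore has the unique minimizer at which the residual is zero:
\[
\nabla_{\y} f(\x^t,\y^t)+(I-\nabla_{\y}\Psi_{\mu_t}(\x^t,\y^t))^\top\v^t=\boldsymbol{\mathrm{0}}.
\]
Subtract the defining relation \eqref{eq:v mu x} at $\x^t$ and write $\bar\y:=\yy_{\mu_t}(\x^t)$, $\bar\v:=\vv_{\mu_t}(\x^t)$. We get
\[
(I-\nabla_{\y}\Psi_{\mu_t}(\x^t,\y^t))^\top(\v^t-\bar\v)=-\big(\nabla_{\y}f(\x^t,\y^t)-\nabla_{\y}f(\x^t,\bar\y)\big)+\big(\nabla_{\y}\Psi_{\mu_t}(\x^t,\y^t)-\nabla_{\y}\Psi_{\mu_t}(\x^t,\bar\y)\big)^\top\bar\v.
\]
The left side is bounded below by $(1-L_s)\|\v^t-\bar\v\|$, by the third inequality of Lemma~\ref{lem: blanket Lip nabla Psi * v}. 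On the right, the first term is at most $L_f\|\y^t-\bar\y\|$ by Assumption~\ref{assumption: Psi mu}(\ref{assumption Lf Lipschitz}). The second term is at most $\frac{\widehat C}{\mu_t}\|\y^t-\bar\y\|\,\overline M$ by Assumption~\ref{assumption: Psi mu}(\ref{lem: norm of partial Psi * v 0}) and Lemma~\ref{lem: y v bounded}.

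Combining these gives
\[
(1-L_s)\|\v^t-\bar\v\|\le \Big(L_f+\tfrac{\widehat C\overline M}{\mu_t}\Big)\|\y^t-\bar\y\|.
\]
Squaring with $(a+b)^2\le 2a^2+2b^2$ and using $\mu_t\le 1$, so that $L_f^2\le L_f^2/\mu_t^2$, we obtain
\[
\|\v^t-\bar\v\|^2\le \frac{2(L_f^2+\widehat C^2\overline M^2)}{(1-L_s)^2\mu_t^2}\|\y^t-\bar\y\|^2.
\]
Inserting the first bound yields the stated estimate with the factor $(1-L_s)^{-4}$.

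The only delicate points are two bookkeeping facts. First, the minimizer in \eqref{eq:algorithm sub-step 2} gives an exact zero residual, which holds because of invertibility. Second, we need the bound $\|\bar\v\|\le\overline M$ rather than a bound on $\|\v^t\|$. This is why the cross term is arranged so that the difference of Jacobians multiplies $\bar\v$, while the Jacobian at $\y^t$ multiplies $\v^t-\bar\v$.
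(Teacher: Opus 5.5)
Your proof is correct and follows the paper's argument essentially line by line. For the first bound you use the same contraction estimate $\|\y^t-\yy_{\mu_t}(\x^t)\|\le\tau_t+L_s\|\y^t-\yy_{\mu_t}(\x^t)\|$; for the second you subtract the two zero-residual linear systems with the Jacobian difference multiplying $\vv_{\mu_t}(\x^t)$, then apply $(a+b)^2\le 2a^2+2b^2$ and $\mu_t\le 1$, exactly as the paper does (your explicit appeal to Lemma~\ref{lem: nonsingular} for the zero residual is a step the paper uses only implicitly).
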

\begin{proof}
	From~\eqref{eq:algorithm sub-step 1}, we have
	\[
	\begin{aligned}
		\| \y^{t} - \yy_{\mu_t}(\x^t) \| = \| \y^t - \Psi_{{\mu}_t}(\x^t, \yy_{\mu_t}(\x^t)) \|
		\le \tau_t + L_s \| \y^{t} - \yy_{\mu_t}(\x^t) \|,
	\end{aligned}
	\]
	and thus obtain the first result.
	For the second result, from~\eqref{eq:v mu x},
	\[
		\nabla_{\y} f(\x^t,\yy_{\mu_t}(\x^t))  +  \left(\nabla_{\y} H_{\mu_t}(\x^t,\yy_{\mu_t}(\x^t))\right)\lowtop \vv_{\mu_t}(\x^t) = \boldsymbol{\mathrm{0}},
	\]
	and from~\eqref{eq:algorithm sub-step 2},
	$
		\nabla_{\y} f(\x^t,\y^t)  +  \left(
		\nabla_{\y} H_{\mu_t}(\x^t,\y^t)
		\right)\lowtop \v^t = \boldsymbol{\mathrm{0}}.
$
	Then we have
	\[
	\begin{aligned}
		& \left(I - \nabla_{\y} \Psi_{\mu_t}(\x^t,\y^t)\right)\lowtop \left(\v^t  - \vv_{\mu_t}(\x)\right) \\
		= & - \left(\nabla_{\y} f(\x^t,\y^t) \! - \! \nabla_{\y} f(\x^t,\yy_{\mu_t}(\x^t)) \right)
		\! + \! \left(\nabla_{\y} \Psi_{\mu_t}(\x^t,\y^t) \!
		- \! \nabla_{\y} \Psi_{\mu_t}(\x^t,\yy_{\mu_t}(\x^t))\right)\lowtop \!\! \vv_{\mu_t}(\x^t).
	\end{aligned}
	\]
	Thus, from Lemma~\ref{lem: blanket Lip nabla Psi * v} and
	Assumption~\ref{assumption: Psi mu}(\ref{lem: norm of partial Psi * v 0})(\ref{assumption Lf Lipschitz}),
	\[
	\begin{aligned}
		(1 - L_s) \| \v^t  - \vv_{\mu_t}(\x) \|
		\le L_f \| \y^{t} - \yy_{\mu_t}(\x^t) \| + \frac{\widehat{C}}{\mu_t} \left\|\y^{t} - \yy_{\mu_t}(\x^t) \right\| \|\vv_{\mu_t}(\x^t)\|.
	\end{aligned}
	\]
	and further from Lemma~\ref{lem: y v bounded},
	we have
	\[
	\| \v^{t} - \vv_{\mu_t}(\x^t) \|^2
	\le \frac{2}{(1 - L_s)^2} \left( L_f^2 +  \frac{\widehat{C}^2 \overline{M}^2 }{\mu_t^2} \right) \| \y^{t} - \yy_{\mu_t}(\x^t) \|^2,
	\]
	which derives the desired result along with $\mu_t \le 1$ for any $t$.
\end{proof}

\end{document}